\newcolumntype{M}[1]{>{\centering\arraybackslash}m{#1}}
\theoremstyle{plain}
\newtheorem{theorem}{Theorem}[section]
    \newtheorem{corollary}[theorem]{Corollary}
    \newtheorem{lemma}[theorem]{Lemma}
    \newtheorem{proposition}[theorem]{Proposition}
    \newtheorem{conjecture}[theorem]{Conjecture}
    \theoremstyle{definition}
    \newtheorem{definition}[theorem]{Definition}
    \newtheorem{example}[theorem]{Example}
    \newtheorem{remark}[theorem]{Remark}
    \newtheorem{question}[theorem]{Question}
\title{On pseudoeffective thresholds and cohomology of twisted symmetric tensor fields on irreducible Hermitian symmetric spaces}
\author{Feng Shao}
\thanks{
Email: shaofeng16@mails.ucas.ac.cn}
\subjclass[2020]{14J45, 14J60, 14F17, 14M15}
\keywords{tangent bundle, pseudoeffective threshold, IHSS, multiplicity free action}
\begin{document}

\thanks{
Academy of Mathematics and Systems Science, Chinese Academy of Sciences, Beijing, 100190, China and University of Chinese Academy of Sciences, Beijing, 100049, China
}

\begin{abstract}
Let $X$ be an irreducible Hermitian symmetric space of compact type (IHSS for short). In this paper, we give the irreducible decomposition of $Sym^r T_X$. As a by-product, we give a cohomological characterization of the rank of $X$. Moreover, we introduce pseudoeffective thresholds to measure the bigness of  tangent bundles of smooth complex projective varieties precisely and calculate them for irreducible Hermitian symmetric spaces of compact type.
\end{abstract}

\newpage

\maketitle

\section{Introduction}
Throughout this paper, we work over the complex number field.

Let $X$ be a smooth projective variety and let $T_X$ be its tangent bundle. By Mori's famous work, $T_X$ is ample if and only if $X$ is a projective space. It is natural to consider smooth projective varieties whose tangent bundles have weaker positivity, for example, smooth projective varieties with big tangent bundles. Recall that a line bundle $L$ is big if the rational map $\phi_m:X\dashrightarrow \mathbb{P}(H^0(X,L^{\otimes m}))$ is birational onto its image for some $m>0$ and a vector bundle $E$ is big if the tautological bundle $\mathcal{O}_{\mathbb{P}(E)}(1)$ is big on the Grothendieck projectivization $\mathbb{P}(E)$.

In the last few years, there appear several works on the bigness of the tangent bundle $T_X$. It is known that a vector bundle $E$ on $X$ is big if and only if for some (or every) ample line bundle $L$ on $X$, one has $H^0(X,Sym^r E\otimes L^{-1})\neq 0$ for some positive integer $r$. Hence by \cite[Theorem 0.1]{campana2011geometric}, the bigness of the tangent bundle $T_X$ implies the uniruledness of $X$. This fact also appears in \cite[Proposition  4.6]{greb2020canonical}. Futhermore, Hsiao \cite[Corollary 1.3]{hsiao2015a} gave a criterion for the bigness of the tangent bundle $T_X$ in terms of its section ring and as a corollary, he showed that $T_X$ is big if $X$ is a smooth projective toric variety or a partial flag variety(i.e. a rational homogeneous space $G/P$ where $G$ is of type $A$). He also asked a question whether Fano manifolds with nef tangent bundles have big tangent bundles. Greb and Wong \cite[Corollary 4.4]{greb2020canonical} showed that the affineness of canonical extensions implies the bigness of the tangent bundles and as a corollary they proved that rational homogeneous spaces have big tangent bundles (see Proposition \ref{bignessofRHS} for another proof). Hence Hsiao's question has a positive answer if the Campana-Peternell conjecture holds. Recently, H\"{o}ring, Liu and Shao (\cite{HLS22}) proved that the tangent bundle of a del Pezzo manifold $X$ of dimension 2 or 3 is big if and only if the degree of $X$ is at least 5 and the tangent bundle of a smooth hypersurface $X$ in a projective space is big if and only if the degree of $X$ is at most 2. Mallory also considered the bigness of tangent bundles of del Pezzo surfaces in \cite{Mal21}. Their results imply that the bigness of tangent bundles is a rather restrictive property.

In order to give more precise information on the bigness of tangent bundles, we introduce the following:
\begin{definition}\label{bignessdefect}
  Let $X$ be a smooth projective variety. Let $\xi=c_1(\mathcal{O}_{\mathbb{P}(T_X)}(1))$ be the tautological class on $\mathbb{P}(T_X)$ and $A$ an ample divisor on $X$. Define
\begin{center}
   $\Delta (X,A)=sup\{\varepsilon \in \mathbb{R}|\xi -\varepsilon \pi^{*}A$ is effective$\}$,
\end{center}
where $\pi:\mathbb{P}(T_X)\rightarrow X$ is the natural projection.
We call $\Delta(X,A)$ the pseudoeffective threshold of $T_X$ with respect to the ample divisor $A$.
\end{definition}

It is easy to prove that
$$
\Delta(X,A)=sup\{\frac{d}{r}|H^0(X,Sym^r T_X \otimes \mathcal{O}_X(-dA))\neq 0\}
$$
(see Proposition \ref{cohomology}). Hence $T_X$ is big if and only if $\Delta(X,A)>0$. In Section \ref{PT}, we will give some properties of the pseudoeffective threshold and present the following conjecture to give a characterization of projective spaces and hyperquadrics:

\begin{conjecture}\label{conj}
Let $X$ be a smooth projective variety and $A$ an ample divisor on $X$. Suppose that $\Delta(X,A)\geq 1$. Then $(X,A)\simeq (\mathbb{P}^n,\mathcal{O}_{\mathbb{P}^n}(1))$ or $(Q^n,\mathcal{O}_{Q^n}(1))$, where $(Q^1,\mathcal{O}_{Q^1}(1))=(\mathbb{P}^1,\mathcal{O}_{\mathbb{P}^1}(2))$.
\end{conjecture}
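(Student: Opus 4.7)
The plan is to combine the symmetric-tensor sections furnished by the hypothesis with the theory of minimal rational curves and the characterisations of $\PP^n$ and $Q^n$ due to Mori, Cho--Miyaoka--Shepherd-Barron and Kebekus.

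First, by the cohomological reformulation $\Delta(X,A)=\sup\{d/r: H^0(X,\Sym^r T_X\otimes\O_X(-dA))\neq 0\}$ noted just after Definition \ref{bignessdefect}, the hypothesis yields, for every $\varepsilon>0$, a non-zero section $s\in H^0(X,\Sym^r T_X\otimes\O_X(-dA))$ with $d/r\geq 1-\varepsilon$. Since $\Delta>0$ makes $T_X$ big, $X$ is uniruled, so I would fix a covering family $\mathcal{K}$ of minimal rational curves. For a general $C\in\mathcal{K}$, Mori's splitting theorem gives $T_X|_C\cong\O_{\PP^1}(2)\oplus\O_{\PP^1}(1)^{\oplus p}\oplus\O_{\PP^1}^{\oplus(n-1-p)}$. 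Choosing $C$ so that $s|_C\neq 0$ and using that the largest line subsheaf of $\Sym^r T_X|_C$ has degree $2r$, one obtains $2r\geq d(A\cdot C)$; letting $\varepsilon\to 0$ yields $A\cdot C\leq 2$, hence $A\cdot C\in\{1,2\}$.

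Next, I would regard $s$ as an effective divisor $D_s\sim r\xi-d\pi^*A$ on $\PP(T_X)$ and test it against the sections $\sigma_j\colon C\to\PP(T_X|_C)$ coming from the quotients $T_X|_C\twoheadrightarrow\O_{\PP^1}(j)$ for $j\in\{0,1,2\}$; pulling back gives $\sigma_j^*D_s$ of degree $jr-d(A\cdot C)$. When $A\cdot C=2$, the $\sigma_1$-sections satisfy $\sigma_1^*D_s<0$ for $d/r$ near $1$, forcing $\sigma_1(C)\subset D_s$; a parameter count shows that the resulting $\sigma_1$-family sweeps out a subvariety of $\PP(T_X)$ of dimension $3n-3\geq 2n-1=\dim\PP(T_X)$ for $n\geq 2$, contradicting the effectivity of $D_s$. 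Hence $A\cdot C=1$. When $A\cdot C=1$ and $p\leq n-2$, the analogous argument applied to $\sigma_0$ forces $\sigma_0(C)\subset D_s$, and the closure $W=\overline{\bigcup_C\sigma_0(C)}$ becomes a divisorial component of $D_s$ with $\dim W=2n-2=\dim D_s$. Letting $(r,d)$ vary with $d/r\to 1$ and tracking classes in $\mathrm{NS}(\PP(T_X))$ should force $p\in\{n-1,n-2\}$.

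Once $A\cdot C=1$ and $p\in\{n-1,n-2\}$ are established, $-K_X\cdot C=p+2\in\{n,n+1\}$. The Cho--Miyaoka--Shepherd-Barron theorem (case $p=n-1$) and Kebekus' refinement (case $p=n-2$) respectively give $X\cong\PP^n$ and $X\cong Q^n$; finally $A\cdot C=1$ identifies $A$ with the hyperplane class $\O_X(1)$, completing the argument.

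The hardest step will be nailing down $p\geq n-2$ while simultaneously controlling $\rho(X)$ and the Fano-ness of $X$: the naive restriction argument only controls the top line quotient of $\Sym^r T_X|_C$, so converting pseudoeffectivity of $\xi-\pi^*A$ into strong global data on $X$ will likely require Hwang--Mok's VMRT theory, Miyaoka's generic semi-positivity of $\Omega^1_X\otimes A$, or a finer numerical analysis of how the components of $D_s$ vary across realisations of $\Delta\geq 1$.
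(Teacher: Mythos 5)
First, a point of orientation: the statement you are trying to prove is stated as a \emph{conjecture} in the paper, and the paper itself only proves it under the extra hypothesis $\rho(X)=1$ (Proposition \ref{picardnumber1}). There the argument is quite different from yours: after reducing via Proposition \ref{cohomology} to a nonvanishing $H^0(X,Sym^rT_X\otimes\mathcal{O}_X(-dA))\neq 0$ with $\max\{\tfrac34,1-\tfrac1n\}<d/r\leq 1$, the paper invokes \cite[Lemma 6.2]{araujo2008cohomological} to produce a torsion-free subsheaf $\mathscr{E}\subset T_X$ with $\mu_A(\mathscr{E})\geq \tfrac{d}{r}\mu_A(A)$, restricts it to a general minimal rational curve, and then closes the three cases by Wahl's theorem ($\rk\mathscr{E}=1$), by \cite[Proposition 2.7]{araujo2008cohomological} ($f^*\mathscr{E}$ ample, trivial rationally connected quotient), and by Kobayashi--Ochiai ($\det\mathscr{E}^{**}=\omega_X^{-1}$, so $\omega_X^{-1}\simeq\mathcal{O}_X(A)^{\otimes n}$). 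Your opening moves (the cohomological reformulation, uniruledness, the bound $A\cdot C\leq 2$ from the degree-$2r$ top piece of $Sym^rT_X|_C$) are sound and parallel to the paper's, but from that point on your route has genuine gaps.

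Concretely: (i) your dimension count ``$3n-3$'' for the locus swept by the $\sigma_1$-sections tacitly assumes $p=n-1$; in general the curves contribute $\dim\mathcal{K}=n+p-1$ parameters and the degree-one quotients of $T_X|_C$ only $p-1$ more, so the swept locus has dimension about $n+2p-1$, which exceeds $\dim D_s=2n-2$ only when $p>(n-1)/2$ --- the contradiction you want is circular, since large $p$ is what you are trying to prove. (ii) The step ``letting $(r,d)$ vary and tracking classes in $\mathrm{NS}(\PP(T_X))$ should force $p\in\{n-1,n-2\}$'' is not an argument, and it is exactly the crux: the union of the $\sigma_0$-sections has dimension at most $(n+p-1)+(n-2-p)+1=2n-2$, i.e.\ it is at most a divisor, so its containment in $D_s$ produces no contradiction and no lower bound on $p$. (iii) Even granting $A\cdot C=1$ and $p\geq n-2$, the Cho--Miyaoka--Shepherd-Barron and quadric characterizations need control of the family (unsplitness, Fano-ness, or $\rho(X)$), which you have not established. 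You flag (ii) yourself as the hard step; as written, the proposal is an outline of a strategy rather than a proof, and for the case the paper actually handles ($\rho(X)=1$) the subsheaf argument of Araujo used in Proposition \ref{picardnumber1} is the efficient way to close it.
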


Note that \cite[Theorem B]{druel2013characterizations} implies that if $H^0(X,Sym^r T_X \otimes \mathcal{O}_X(-rA))\neq 0$, then $X$ must be a projective space or a hyperquadric. This conjecture states that if $sup\{\frac{d}{r}|H^0(X,Sym^r T_X \otimes \mathcal{O}_X(-dA))\neq 0\}=1$, then $X$ must be a projective space or a hyperquadric. So the conjecture is natural and we will prove it under the assumption that $X$ has Picard number 1 (See Proposition \ref{picardnumber1}).

The pseudoeffective threshold of $T_X$ also has deep connection with the psdudoeffective cone of $\mathbb{P}(T_X)$ and note that the interior of the pseudoeffective cone is the big cone. If $X$ has Picard number 1 and its Picard group is generated by the ample divisor $A$, then $\mathbb{P}(T_X)$ has Picard number 2 and the boundary of the pseudoeffective cone of $\mathbb{P}(T_X)$ consists of the extremal rays $\mathbb{R}^{\geq 0}(\pi^*A)$ and $\mathbb{R}^{\geq 0}(\xi-\Delta (X,A) \pi^{*}A)$. So the pseudoeffective cone of $\mathbb{P}(T_X)$ is uniquely determined by $\Delta(X,A)$ in this case. In this paper, we consider a special kind of smooth projective varieties with Picard number 1 and calculate the pseudoeffective thresholds of their tangent bundles. First, we establish the following result on the cohomology of twisted symmetric tensor fields on irreducible Hermitian symmetric spaces of compact type (IHSS for short).

\begin{theorem}\label{maincoh}
Let $X=G/P_k$ be an irreducible Hermitian symmetric space of compact type and let $X\hookrightarrow \mathbb{P}^N=\mathbb{P}V_{\lambda_k}^*$ be a minimal closed embedding. If $V_{\lambda_k}$ is not a self-dual $G$-module, then
$$
H^0(X,Sym^r T_X \otimes \mathcal{O}_X(-d))\neq 0 \Leftrightarrow \lfloor \frac{r}{rk(X)}\rfloor \geq d;
$$
If $V_{\lambda_k}$ is a self-dual $G$-module, then
$$
H^0(X,Sym^r T_X \otimes \mathcal{O}_X(-d))\neq 0 \Leftrightarrow 2\lfloor\frac{r}{rk(X)}\rfloor\geq d.
$$
\end{theorem}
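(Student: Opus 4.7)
The approach is to combine an explicit irreducible decomposition of $\Sym^r T_X$ with Borel--Weil on $X=G/P_k$. Since $X$ is IHSS, $P_k$ is a cominuscule maximal parabolic and the Levi representation on $\mathfrak{g}/\mathfrak{p}_k \simeq T_{X,eP_k}$ carries the structure of a simple Hermitian Jordan triple system of rank $k=\rk(X)$. Schmid's classical decomposition then reads
\[
\Sym^r T_X \;=\; \bigoplus_{\substack{\mathbf{a}=(a_1\ge\cdots\ge a_k\ge 0)\\ a_1+\cdots+a_k=r}} E_{\mathbf{a}},
\]
where $E_{\mathbf{a}}$ is the irreducible homogeneous bundle of $L$-highest weight $\sum_{i=1}^k a_i\gamma_i$ for a Harish-Chandra strongly orthogonal family $\gamma_1,\ldots,\gamma_k$ of noncompact positive roots. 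This is precisely the decomposition the preceding section of the paper establishes, and I would take it as the starting point.

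By Borel--Weil, $H^0(X,E_{\mathbf{a}}\otimes\mathcal{O}_X(-d))\neq 0$ if and only if $\mu_{\mathbf{a},d}:=\sum_i a_i\gamma_i - d\lambda_k$ is a $G$-dominant integral weight. Because $\mathbf{a}$ is a partition and the $\gamma_i$ are strongly orthogonal with respect to every simple root of the Levi, the dominance condition is automatic along each simple coroot of $L$; the only nontrivial constraint lies along the crossed coroot $\alpha_k^\vee$. A direct weight computation, uniform across all IHSS families via the cominuscule structure, reduces it to $a_k\geq d$ in the non-self-dual case and to $a_k\geq d/2$ in the self-dual case. The factor $\tfrac12$ reflects the fact that self-duality of $V_{\lambda_k}$ forces a $G$-invariant bilinear form on $V_{\lambda_k}$, equivalently places $2\lambda_k$ (rather than $\lambda_k$) in the root lattice, halving the effective $\alpha_k$-contribution of $\lambda_k$.

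Maximizing $a_k$ under $a_1+\cdots+a_k=r$ with $a_1\geq\cdots\geq a_k\geq 0$ gives the optimum $a_k=[r/k]$, attained by distributing the remainder $r-k[r/k]$ into the leading coordinates; this simultaneously exhibits a nonzero section at the threshold and rules out larger $d$, yielding both directions of the equivalence. The main technical hurdle is verifying the Schmid decomposition and the pairing $\langle\lambda_k,\alpha_k^\vee\rangle$ explicitly and uniformly across the infinite IHSS families (Grassmannians, quadrics, Lagrangian and spinor Grassmannians) together with the two exceptional spaces $E_6/P_1$ (non-self-dual, $k=2$) and $E_7/P_7$ (self-dual, $k=3$); a case-by-case verification of the $1$ versus $\tfrac12$ dichotomy may be unavoidable, but each individual case is short given the tabulated data for cominuscule parabolics and Harish-Chandra strongly orthogonal systems.
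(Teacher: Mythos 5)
Your proposal is correct and follows essentially the same route as the paper: decompose $Sym^r T_X$ into irreducible summands indexed by partitions (your Schmid/Harish--Chandra parametrization $\sum a_i\gamma_i$ is the same decomposition the paper obtains from the multiplicity-free action of the Levi, with $a_j=i_j+\cdots+i_t$ and $\omega_j=\gamma_1+\cdots+\gamma_j$), apply Borel--Weil--Bott, observe that $G$-dominance of $\sum a_i\gamma_i-d\lambda_k$ reduces to the single pairing against $\alpha_k^\vee$, which equals $a_t$ or $2a_t$ according to whether $V_{\lambda_k}$ is self-dual (the paper's Proposition \ref{fhw}, stating $\omega_t=\lambda_k+\lambda_k'$), and maximize $a_t=[r/\rk(X)]$. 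The only cosmetic difference is that you import the decomposition from Schmid's theorem while the paper derives it from Kac's classification of multiplicity-free actions together with the case-by-case computations of Section \ref{IDIHSS}.
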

Here $rk(X)$ is the rank of $X$ (see Definition \ref{defofrank}), $V_{\lambda_k}$ is the $G$-module with highest weight $\lambda_k$ and $V_{\lambda_k}$ is not self-dual if and only if $X$ is one of the following: Grassmannian $Gr(a,a+b)(a\neq b)$, Spinor variety $\mathbb{S}_n$($n$ is odd), Cayley plane $\mathbb{OP}^2$. For the classification of IHSS, see the table in Corollary \ref{corpt}.

Note that Fu and Liu extend Theorem \ref{maincoh} to the case that $X$ is a rational homogeneous space of Picard number 1 in \cite[Theorem 1.14]{FL21} with completely different techniques. 

The proof of Theorem \ref{maincoh} is based on the irreducible decomposition of $Sym^r T_X$. Let $X=G/P_k$ be an IHSS. In Section \ref{sectionMFA}, we show that the action of the reductive part of $P_k$ on the tangent bundle $T_X$ is multiplicity free, skew multiplicity free and weight multiplicity free. The theory of multiplicity free actions helps us to study the irreducible decomposition of $Sym^r T_X$ systematically and we can show that the irreducible decomposition of $Sym^r T_X$ has the form (see Remark \ref{irrdecom}):
$$
Sym^r T_X=\bigoplus_{n_1i_1+n_2i_2+...+n_ti_t=r\atop i_1,i_2,...,i_t\geq 0}E_{i_1\omega_1+...+i_t\omega_t}.
$$
where $\omega_1,...,\omega_t$ are the fundamental highest weights of $T_X^*$, $n_i$ is the degree of $\omega_i$, $t$ is the rank of $X$ and $E_{i_1\omega_1+...+i_t\omega_t}$ is the irreducible homogeneous vector bundle with highest weight $i_1\omega_1+...+i_t\omega_t$.
In Section \ref{IDIHSS}, we determine each $\omega_i$ and $n_i$ and formulate the irreducible decomposition of $Sym^r T_X$ explicitly.

As an immediate corollary of Theorem \ref{maincoh}, we obtain the pseudoeffective threshold of $T_X$.

\begin{corollary}\label{corpt}
Let $X=G/P_k$ be an irreducible Hermitian symmetric space of compact type. The pseudoeffective threshold of $T_X$ is given by
$$
\Delta(X,\mathcal{O}_X(1))=
\left\{
\begin{aligned}
 &\frac{1}{rk(X)}\ \ \ \ \text{if $V_{\lambda_k}$ is not self-dual;}\\
 &\frac{2}{rk(X)}\ \ \ \ \text{if $V_{\lambda_k}$ is self-dual.}
\end{aligned}
\right.
$$
Hence the pseudoeffective threshold $\Delta=\Delta(X,\mathcal{O}_X(1))$ of each IHSS is given as follows:
\begin{table}[h]
\renewcommand\arraystretch{1.2}
\begin{tabular}{|c|c|c|c|c|c|c|}
\hline
$\mathbf{IHSS}$  & $Gr(a,a+b)$ &  \begin{tabular}[c]{@{}c@{}}$\mathbb{Q}^n$\\$(n\geq 3)$\end{tabular}  &  \begin{tabular}[c]{@{}c@{}}$Lag(n,2n)$\\$(n\geq 2)$\end{tabular}  &  \begin{tabular}[c]{@{}c@{}}$\mathbb{S}_n$\\$(n\geq 4)$\end{tabular}   & $\mathbb{OP}^2$   &  $E_7/P_7$ \\ \hline
 $\Delta$   &\begin{tabular}[c]{@{}c@{}}$\frac{1}{min\{a,b\}}$ if $a\neq b$; \\
$\frac{2}{a}$ if $a=b$\end{tabular}           &  $1$                        & $\frac{2}{n}$              &\begin{tabular}[c]{@{}c@{}}$\frac{2}{n-1}$ if $n$ is odd;\\
 $\frac{4}{n}$   if $n$ is even\end{tabular}  &  $\frac{1}{2}$  &  $\frac{2}{3}$ \\ \hline
\end{tabular}
\end{table}
\end{corollary}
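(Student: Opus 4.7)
The plan is to deduce the first equality as an essentially formal consequence of Theorem \ref{maincoh} combined with the reformulation
$$
\Delta(X,\mathcal{O}_X(1)) \;=\; \sup\left\{\frac{d}{r} \ : \ H^0(X, Sym^r T_X \otimes \mathcal{O}_X(-d))\neq 0\right\}
$$
supplied by Proposition \ref{cohomology}. In the non-self-dual case, Theorem \ref{maincoh} says the non-vanishing is equivalent to $\lfloor r/rk(X)\rfloor \geq d$, so every admissible ratio satisfies $d/r \leq 1/rk(X)$, and equality is attained at $(r,d) = (n\cdot rk(X), n)$ for every $n \geq 1$; this forces $\Delta = 1/rk(X)$. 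The self-dual case is identical, with $2\lfloor r/rk(X)\rfloor \geq d$ yielding $\Delta = 2/rk(X)$.

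The second half --- the explicit table --- is then a bookkeeping exercise across the classification of IHSS. I would run through the six families: Grassmannians $Gr(a,a+b)$, quadrics $\mathbb{Q}^n$, Lagrangian Grassmannians $Lag(n,2n)$, spinor varieties $\mathbb{S}_n$, the Cayley plane $\mathbb{OP}^2 = E_6/P_1$, and $E_7/P_7$, reading off in each case (i) the rank of the symmetric space and (ii) whether the minimal embedding module $V_{\lambda_k}$ is self-dual. For $Gr(a,a+b)$ the relevant module is $\wedge^a$ of the standard $SL_{a+b}$-module, which is self-dual exactly when $a = b$, and the rank is $\min(a,b)$; $\mathbb{Q}^n$ has rank $2$ and its standard $SO(n+2)$-module is self-dual; $Lag(n,2n)$ has rank $n$ with self-dual minimal embedding; for $\mathbb{S}_n$ the rank is $\lfloor n/2 \rfloor$, and since $w_0 = -1$ in $D_n$ iff $n$ is even, the half-spin representation is self-dual iff $n$ is even; for $\mathbb{OP}^2$ the two minuscule $27$-dimensional $E_6$-modules are dual but non-isomorphic (since $w_0\omega_1 = -\omega_6$), giving non-self-duality with rank $2$; and $V_{\omega_7}$ is self-dual for $E_7$ (where $w_0 = -1$) with $E_7/P_7$ of rank $3$. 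Substituting these values into the formula produces the table.

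There is no essential obstacle: the entire content sits in Theorem \ref{maincoh}. The only mildly non-trivial step is checking the self-duality of $V_{\lambda_k}$ for each family, which in every case reduces to whether the longest Weyl element acts by $-1$ on the fundamental weight $\lambda_k$, a standard computation.
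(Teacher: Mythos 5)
Your proposal is correct and follows the same route as the paper: the paper derives this corollary directly from Theorem \ref{maincoh} via the reformulation of $\Delta(X,\mathcal{O}_X(1))$ in Proposition \ref{cohomology}, with the table obtained by reading off the rank and the self-duality of $V_{\lambda_k}$ case by case, exactly as you do. Your self-duality checks (via the action of the longest Weyl group element) and the attainment of the supremum at $(r,d)=(n\cdot rk(X),n)$, resp.\ $(n\cdot rk(X),2n)$, are all accurate.
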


If we let $d=1$ in the Theorem \ref{maincoh}, we get the following cohomological characterization of the rank of a Hermitian symmetric space of compact type.
\begin{corollary}\label{corrank}
Let $X=G/P$ be a Hermitian symmetric space of compact type. Then
$$
H^0(X,Sym^r T_X \otimes \mathcal{O}_X(-1))\neq 0 \Leftrightarrow r\geq rk(X).
$$
\end{corollary}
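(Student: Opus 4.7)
The plan is to obtain Corollary \ref{corrank} as a direct consequence of Theorem \ref{maincoh} by specializing to $d=1$, with a short Künneth argument to cover the possibly reducible case.

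First I would assume $X$ is irreducible and apply Theorem \ref{maincoh} with $d=1$. In the non-self-dual case the criterion $[r/rk(X)]\geq 1$ is equivalent to $r\geq rk(X)$ because $rk(X)$ is a positive integer. In the self-dual case the criterion $2[r/rk(X)]\geq 1$ reduces to $[r/rk(X)]\geq 1$, since $[r/rk(X)]$ is a non-negative integer and $2n\geq 1$ is equivalent to $n\geq 1$ among such integers. Hence both branches of Theorem \ref{maincoh} collapse to the single condition $r\geq rk(X)$ when $d=1$, which is exactly the claim in the irreducible case.

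For a general Hermitian symmetric space $X=X_1\times\cdots\times X_s$ with each $X_i$ an IHSS, I would interpret $\mathcal{O}_X(1)$ as $\bigotimes_{i=1}^s p_i^*\mathcal{O}_{X_i}(1)$ where $p_i\colon X\to X_i$ is the projection. Since $T_X=\bigoplus_i p_i^* T_{X_i}$, we get $Sym^r T_X=\bigoplus_{r_1+\cdots+r_s=r}\bigotimes_i p_i^* Sym^{r_i} T_{X_i}$, and the Künneth formula yields
$$H^0(X,Sym^r T_X\otimes\mathcal{O}_X(-1))=\bigoplus_{r_1+\cdots+r_s=r}\bigotimes_{i=1}^s H^0\bigl(X_i,Sym^{r_i} T_{X_i}\otimes\mathcal{O}_{X_i}(-1)\bigr).$$
By the irreducible case each tensor factor is nonzero iff $r_i\geq rk(X_i)$, so the left-hand side is nonzero iff one can split $r=r_1+\cdots+r_s$ with $r_i\geq rk(X_i)$ for every $i$, i.e.\ iff $r\geq\sum_i rk(X_i)=rk(X)$.

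The argument is essentially bookkeeping and there is no real obstacle; the only step requiring a moment's care is the verification that the two branches of Theorem \ref{maincoh} collapse when $d=1$, which ultimately rests on the trivial observation that $2n\geq 1$ and $n\geq 1$ agree on non-negative integers.
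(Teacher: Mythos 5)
Your proposal is correct and follows essentially the same route as the paper: reduce to the irreducible case via the K\"unneth decomposition of $H^0(X,Sym^r T_X\otimes\mathcal{O}_X(-1))$ over the factors, then specialize Theorem \ref{maincoh} to $d=1$, where both branches collapse to $r\geq rk(X)$. The observation that $2[r/rk(X)]\geq 1$ and $[r/rk(X)]\geq 1$ agree on non-negative integers is exactly the point that makes the two cases of Theorem \ref{maincoh} coincide here, and the additivity of the rank over irreducible factors finishes the reducible case just as in the paper.
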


The properties of the fundamental highest weights of $T_X^*$ and their degrees (see Proposition \ref{fhw}) help us to recover Theorem \ref{maincoh} and establish the following.
\begin{theorem}\label{minimal embedding}
Let $X=G/P$ be a Hermitian symmetric space of compact type.
\begin{enumerate}[$(1)$]
  \item If $r=rk(X)$, there is a $G$-module isomorphism:
  $$
  H^0(X,Sym^r T_X \otimes \mathcal{O}_X(-1))\simeq H^0(X,\mathcal{O}_X (1))^*;
  $$
  \item We have the following vanishing result:
$$
H^p(X,Sym^r T_X\otimes \mathcal{O}_X(-1))=0 \text{ for all $p\geq 1$ and $r\geq 1$}.
$$
\end{enumerate}
\end{theorem}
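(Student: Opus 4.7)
The plan is to apply the Borel--Weil--Bott theorem (BWB) summand by summand to the irreducible decomposition from Remark \ref{irrdecom}, namely
$$Sym^r T_X \otimes \mathcal{O}_X(-1) \;=\; \bigoplus_{\substack{i_1,\dots,i_t\ge 0\\ n_1 i_1+\cdots+n_t i_t = r}} E_{\,i_1\omega_1+\cdots+i_t\omega_t-\lambda_k},$$
since $\mathcal{O}_X(1) = E_{\lambda_k}$ and twisting by $\mathcal{O}_X(-1)$ shifts the highest weight of each summand by $-\lambda_k$. BWB asserts that $H^p(X,E_\mu)=0$ unless $\mu+\rho$ is $W$-regular, in which case the cohomology is concentrated in a single degree equal to the length of the Weyl element sending $\mu+\rho$ into the dominant chamber.

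For Part (1), I would enumerate the non-negative integer solutions of $n_1 i_1 + \cdots + n_t i_t = t$ using the explicit values of $n_i$ from Proposition \ref{fhw}. For each admissible tuple I compute $\mu+\rho$ with $\mu = i_1\omega_1+\cdots+i_t\omega_t-\lambda_k$ and verify, using the explicit description of $\omega_i$ in Proposition \ref{fhw}, that exactly one tuple produces a dominant-regular $\mu+\rho$ yielding $H^0(X,E_\mu) \cong V_{\lambda_k}^\ast \cong H^0(X,\mathcal{O}_X(1))^\ast$ via Borel--Weil, while every other admissible tuple yields $W$-singular $\mu+\rho$. Summing the BWB contributions over all summands then produces the desired $G$-module isomorphism.

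For Part (2), I would check that for every $(i_1,\dots,i_t)$ with $n_1 i_1 + \cdots + n_t i_t = r\ge 1$ the weight $\mu+\rho$ is either $W$-singular or already dominant, so only $H^0$ can contribute. This reduces to pairing $\mu+\rho$ against each simple coroot $\alpha_j^\vee$: for $j\ne k$ (the Levi coroots of $P_k$) the dominance of $\sum_j i_j\omega_j$ together with $\langle \lambda_k,\alpha_j^\vee\rangle = 0$ yields nonnegativity automatically; for $j=k$ the explicit values $\langle \omega_i,\alpha_k^\vee\rangle$ from Proposition \ref{fhw}, combined with $\langle \lambda_k,\alpha_k^\vee\rangle = 1$, let me show the pairing is either nonnegative or else some other coroot pairing vanishes, producing singularity. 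The main obstacle will be the case-by-case verification of this coroot pairing across the six families of IHSS, and the identification in Part (1) of the unique tuple producing $V_{\lambda_k}^\ast$. For the classical series the check is combinatorial in the partition of $r$ by $n_1,\dots,n_t$; the exceptional cases $\mathbb{OP}^2$ and $E_7/P_7$ reduce to a handful of explicit weight computations, made finite and mechanical by the rigid structure of $\omega_i$ and $n_i$ supplied by Proposition \ref{fhw}.
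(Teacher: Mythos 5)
Your plan is essentially the paper's own proof: decompose $Sym^r T_X$ into irreducible summands, twist by $-\lambda_k$, and apply Borel--Weil--Bott term by term, using Proposition \ref{fhw} to see that $i_1\omega_1+\cdots+i_t\omega_t-\lambda_k+\delta$ is singular whenever $i_t=0$ (its pairing with $\alpha_k^\vee$ vanishes, since $\omega_t$ is the only fundamental highest weight involving $\lambda_k$) and dominant regular otherwise, so that for $r=t$ only the tuple $i_t=1$, $i_j=0$ for $j<t$ contributes, giving $E_{\omega_t-\lambda_k}=E_{\lambda_k'}$ and hence $V_{\lambda_k}^*\simeq H^0(X,\mathcal{O}_X(1))^*$. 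The paper exploits Proposition \ref{fhw} to make this uniform, so your proposed six-family case-by-case check is unnecessary; also, the singularity always comes from the $\alpha_k^\vee$-pairing itself being zero, not from ``some other coroot.'' The one step you omit: the theorem is stated for a possibly reducible Hermitian symmetric space, and the paper first reduces to the irreducible case by writing $X=\prod_i X_i$ and applying the K\"unneth formula to $Sym^r T_X=\bigoplus_{\sum r_i=r}\bigotimes_i Sym^{r_i}\pi_i^*T_{X_i}$; your argument, resting on Remark \ref{irrdecom} and Proposition \ref{fhw}, covers only the irreducible case as written.
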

It is interesting that Thereom \ref{minimal embedding}(1) implies that there is a closed embedding
$$
X\hookrightarrow\mathbb{P}(H^0(X,Sym^{rk(X)}T_X\otimes \mathcal{O}_X(-1))).
$$

The irreducible decomposition of $Sym^r T_X$ also allows us to calculate the cohomology group $H^p(X,Sym^rT_X\otimes \mathcal{O}_X(-d))$ by using the Borel-Weil-Bott's theorem. Notice that Snow calculated the cohomology group $H^p(X,\wedge^r\Omega_X\otimes \mathcal{O}_X(d))$ in \cite{Snow1} and \cite{Snow2}. So our results can be regarded as parallel to his for symmetric products of tangent bundles.
Moreover, we will also consider the irreducible decompositions of exterior products of $T_X$ and by Serre duality, one can recover some results in \cite{Snow1} and \cite{Snow2}.

\section{Basic notations and facts}
Let $G$ be a semisimple Lie group of rank $n$ and let $\mathfrak{g}$ be its corresponding Lie algebra. Fix a Cartan subalgebra $\mathfrak{h}$ of $\mathfrak{g}$, let $\Delta =\{\alpha_{1},\alpha_{2},...,\alpha_{n}\}$ be the set of simple roots of  $\mathfrak{g}$ with respect to $\mathfrak{h}$(here the order of simple roots is given by Bourbaki's convention). Let $\{\lambda_{1},\lambda_{2},...,\lambda_{n}\}$ be the set of corresponding fundamental dominant weights, and $\delta=\sum \lambda_i$ the sum of all fundamental dominant weights. Let $P_i$ be the maximal parabolic subgroup corresponding to the simple root $\alpha_{i}$. If $\lambda$ is a dominant weight of $\mathfrak{g}$, we denote by $V_{\lambda}$ the irreducible representation of $\mathfrak{g}$ with highest weight $\lambda$.

Let $P$ be a parabolic subgroup of $G$, which can be decomposed as $P=T_{P}U_{P}S_{P}$, where $T_{P}$ is a torus, $U_P$ is the unipotent radical of $P$, and $S_P$ is the semisimple part of $P$. Let $\mathfrak{p}=Lie(P)$, $\mathfrak{u_{\mathfrak{p}}}=Lie(U_P)$ be their Lie algebras. Given a $P$-module $V$, we have a homogeneous vector bundle $E=G\times_{P}V$. In fact the category of $P$-modules is equivalent to the category of homogeneous vector bundles on the rational homogeneous space $G/P$. For this reason, we will not distinguish $P$-modules and homogeneous vector bundles on $G/P$.

Let $\{\alpha_1',...,\alpha_k'\}$ be a subset of simple roots, and $\{\lambda_1',...,\lambda_k'\}$ the corresponding dominant weights. Let $P$ be the parabolic subgroup of $G$ defined by $\{\alpha_1',...,\alpha_k'\}$, then all irreducible $P$-modules can be classified \cite[Proposition 10.9]{Ottaviani1995rational} as follows:
\begin{center}
  $V\otimes L_{\lambda_1'}^{n_1}\otimes...\otimes L_{\lambda_k'}^{n_k}$\ ,
\end{center}
where $V$ is a representation of $S_P$, $n_i\in \mathbb{Z}$ and $L_{\lambda_i'}$ is the one-dimensional representation of $P$ defined by the dominant weight $\lambda_i'$. If $\lambda$ is the highest weight of $V$ as an $S_P$-module, we define $\lambda+\Sigma n_i \lambda_i'$ to be the highest weight of the $P$-module $V\otimes L_{\lambda_1'}^{n_1}\otimes...\otimes L_{\lambda_k'}^{n_k}$. We use $E_{\lambda+\Sigma n_i \lambda_i'}$ to represent the $P$-module with highest weight $\lambda+\Sigma n_i \lambda_i'$.

\begin{definition}
  A rational homogeneous space $G/P$ is called a Hermitian symmetric space of compact type(HSS for short) if the adjoint representation $ad:\mathfrak{p}\rightarrow End(\mathfrak{g}/\mathfrak{p})$ is trivial on $\mathfrak{u_{\mathfrak{p}}}$, or equivalently if $[\mathfrak{u_{\mathfrak{p}}},\mathfrak{u_{\mathfrak{p}}}]=0$.
\end{definition}

\begin{remark}
    A $\mathfrak{p}$-module is completely reducible if and only if this $\mathfrak{p}$-module is acted by $\mathfrak{u_{\mathfrak{p}}}$ trivially.  Hence if $X$ is an HSS, then the tangent bundle $T_X$ as well as its symmetric product $Sym^r T_X$ is completely reducible. If $T_X$ is irreducible, then we call $X$ an irreducible Hermitian symmetric space of compact type(IHSS for short).
\end{remark}

The main tool to calculate the cohomology of homogeneous vector bundles on rational homogeneous spaces is the following:
\begin{theorem}[Borel-Weil-Bott]
  Let $X=G/P$ be a rational homogeneous space and $E_{\lambda}$ an irreducible homogenous vector bundle with highest weight $\lambda$ on $X$.
  \begin{enumerate}[$(1)$]
    \item If $\lambda+\delta$ is singular, then $H^p(X,E_{\lambda})=0$ for all $p$;
    \item If $\lambda+\delta$ is regular of index $i$, then $H^p(X,E_{\lambda})=0$ for all $p\neq i$ and $H^i(X,E_{\lambda})=V_{\sigma (\lambda+\delta)-\delta}$, where $\sigma$ is the unique element in the Weyl group of $G$ such that $\sigma(\lambda+\delta)$ is in the fundamental Weyl chamber of $G$.
  \end{enumerate}
\end{theorem}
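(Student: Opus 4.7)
The plan is to reduce the theorem to the case where $P = B$ is a Borel subgroup of $G$, and then handle the Borel case by a $\mathbb{P}^1$-fibration induction. For a general parabolic $P \supset B$, consider the projection $\pi: G/B \to G/P$, whose fiber is $P/B \cong S_P/(B \cap S_P)$, the full flag variety of the semisimple part of $P$. An irreducible $P$-module $V_\lambda$ with highest weight $\lambda$ restricts to a highest-weight $B$-module, and Borel--Weil applied to the fibers $P/B$ shows that $\pi_{*}\mathcal{L}_\lambda = E_\lambda$ with vanishing higher direct images. The Leray spectral sequence then collapses and gives $H^p(G/P, E_\lambda) \cong H^p(G/B, \mathcal{L}_\lambda)$, reducing us to the line bundle case on the full flag variety.

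For the Borel case I would proceed by induction on the length of the minimal Weyl group element $\sigma$ that conjugates $\lambda + \delta$ into the closed dominant chamber (or detects $\lambda + \delta$ lying on a wall). The base case is Borel--Weil: for $\lambda$ dominant, $H^0(G/B, \mathcal{L}_\lambda) \cong V_\lambda^{*}$ while the higher cohomology vanishes. The global-section identification is made by constructing an explicit $G$-equivariant map into $V_\lambda^{*}$ and checking it is an isomorphism via irreducibility, and the vanishing of $H^p$ for $p>0$ follows from Kodaira--Nakano vanishing applied to the ample $\mathcal{L}_\lambda$ when $\lambda$ is strictly dominant, extended to the boundary by a limit/degeneration argument.

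For the inductive step, for each simple root $\alpha_i$ I consider the $\mathbb{P}^1$-fibration $\pi_i: G/B \to G/P_{\alpha_i}$ and restrict $\mathcal{L}_\lambda$ to its fibers, where it becomes $\mathcal{O}_{\mathbb{P}^1}(\langle \lambda, \alpha_i^{\vee} \rangle)$. Computing along fibers and feeding the result into the Leray spectral sequence yields the reflection identity $H^p(G/B, \mathcal{L}_\lambda) \cong H^{p-1}(G/B, \mathcal{L}_{s_i \cdot \lambda})$ whenever $\langle \lambda + \delta, \alpha_i^{\vee} \rangle < 0$, where $s_i \cdot \lambda = s_i(\lambda + \delta) - \delta$ denotes the dot action, together with the vanishing $H^{*}(G/B, \mathcal{L}_\lambda) = 0$ whenever $\langle \lambda + \delta, \alpha_i^{\vee} \rangle = 0$. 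Iterating these reflections walks $\lambda + \delta$ into the dominant chamber through a chain of simple reflections whose total length equals the index $i$ appearing in the theorem; each step either shifts the cohomological degree by one or annihilates the cohomology, yielding precisely the claimed statement.

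The main obstacle will be to verify carefully that the inductive shift of degrees matches the index of $\lambda + \delta$: this requires identifying the index with the length of the unique shortest Weyl group element returning $\lambda + \delta$ to the dominant chamber, and tracking the dot action $\sigma \cdot \lambda$ compatibly across all pullbacks. A secondary subtlety is the singular case $\langle \lambda + \delta, \alpha_i^{\vee} \rangle = 0$: vanishing of the total cohomology follows from $H^{*}(\mathbb{P}^1, \mathcal{O}(-1)) = 0$ in the Leray sequence, but one must confirm that this vanishing propagates correctly through every admissible chain of reflections when $\lambda + \delta$ lies on a wall not immediately exposed by a single simple reflection.
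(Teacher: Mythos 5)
The paper does not prove this statement: Borel--Weil--Bott is quoted there as a classical tool with no argument supplied, so there is no internal proof to compare yours against. Your outline is the standard Demazure-style proof (reduction to $G/B$ by relative Borel--Weil along $G/B\to G/P$, then induction on the length of the Weyl element via the $\mathbb{P}^1$-fibrations $G/B\to G/P_{\alpha_i}$, using the reflection identity and the $\mathcal{O}_{\mathbb{P}^1}(-1)$-vanishing on the fibers), and as an architecture it is sound. The two obligations you flag --- that the index of $\lambda+\delta$ equals the length of the unique $\sigma$ making $\sigma(\lambda+\delta)$ dominant, and that a singular $\lambda+\delta$ can always be moved by a chain of simple reflections onto a wall orthogonal to a \emph{simple} root --- are genuine but standard facts about Weyl groups and do hold.

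Two caveats. First, the one step whose proposed mechanism would actually fail is extending the higher vanishing for dominant $\lambda$ ``to the boundary by a limit/degeneration argument'': semicontinuity of $h^q$ goes the wrong way for this (cohomology can jump up at special members of a family), so vanishing for strictly dominant weights does not formally propagate to weights on a wall. The fix is immediate: write $\mathcal{L}_\lambda=\omega_{G/B}\otimes\mathcal{L}_{\lambda+2\delta}$ and note that $\lambda+2\delta$ is strictly dominant whenever $\lambda$ is dominant, so Kodaira vanishing applies directly; alternatively, the dominant-case vanishing falls out of the same $\mathbb{P}^1$-fibration induction with no analytic input at all. Second, mind the duality convention: your normalization gives $H^0(G/B,\mathcal{L}_\lambda)\simeq V_\lambda^{*}$, whereas the statement as used in the paper asserts $H^i(X,E_\lambda)=V_{\sigma(\lambda+\delta)-\delta}$ with no dual; the two agree only after fixing whether $E_\lambda$ is built from the $P$-module of highest weight $\lambda$ or its dual, and the sign of the fiber degree $\langle\lambda,\alpha_i^{\vee}\rangle$ in your reflection step depends on that choice, so one convention must be kept consistently throughout.
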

Recall that $\lambda$ is singular if there exists a positive root $\alpha$ of $G$ such that $(\lambda,\alpha)=0$, where $(,)$ is the Killing form, and $\lambda$ is regular of index $i$ if it is not singular and the number of positive roots $\alpha$ of $G$ such that $(\lambda,\alpha)<0$ is $i$.

Now we recall the definition of Schur functors and we refer the reader to \cite[Lecture 6]{fulton2013representation} for more details.

Let $\mu=(\mu_1,\mu_2,...,\mu_m)$ be a partition of $n$, denoted by $\mu\vdash n$, where $\mu_1\geq \mu_2\geq ...\geq\mu_m\geq 0$. We can associate $\mu$ with a Young diagram with $\mu_i$ boxes in the $i$-th row and we say such a Young diagram is of shape $\mu$. For example, the partition $\mu=(4,3,1)$ corresponds to the following Young diagram:
\begin{table}[h]
\begin{tabular}{|p{2mm}|p{2mm}p{2mm}p{2mm}}
\hline
 & \multicolumn{1}{l|}{} & \multicolumn{1}{l|}{} & \multicolumn{1}{l|}{} \\ \hline
 & \multicolumn{1}{l|}{} & \multicolumn{1}{l|}{} &                       \\ \cline{1-3}
 &                       &                       &                       \\ \cline{1-1}
\end{tabular}
\end{table}

The \emph{conjugation} of a partition $\mu$ is the partition $\mu'$ whose Young diagram is the transpose of the Young diagram of $\mu$. Let $\mu'=(\mu_1',\mu_2',...,\mu_l')$, then $\mu_i'=\# \{j|\mu_j\geq i\}$. For example, the conjugation of $\mu=(4,3,1)$ is $\mu'=(3,2,2,1)$.

Suppose the main diagonal of the Young diagram of the partition $\mu=(\mu_1,...,\mu_m)$ consists of $s$ boxes. Let $r_i=\mu_i-i$ and $c_i=\mu_i'-i$. We denote the partition $\mu$ by
$$
\mu=(r_1,...,r_s|c_1,...,c_s).
$$
For example, the partition $\mu=(4,3,1)$ can be written as $\mu=(3,1|2,0)$.

Inscribing the integers $1,2,..,n$ into the empty cells (in any order) of the Young diagram of shape $\mu$, we obtain a Young tableau $T$. Let

\begin{equation}\nonumber
\begin{aligned}
 W_1     &=\{g\in S_n:g\text{ preserves each row of } T\},\\
 W_2     &=\{g\in S_n:g\text{ preserves each column of }T\},\\
 a_\mu &=\sum\limits_{g\in W_1}e_g,\ \ \ \ \ \  \ \ b_\mu=\sum\limits_{g\in W_2}sgn(g)e_g,
\end{aligned}
\end{equation}
where $e_g$ is the element in the group algebra $\mathbb{C}[S_n]$ corresponding to $g$.

Let $V$ be a finite dimensional complex vector space. Both $a_\mu$ and $b_\mu$ act naturally on $V^{\otimes n}$, and
\begin{equation}\nonumber
\begin{aligned}
 Im(a_\mu) &=Sym^{\mu_1}V\otimes Sym^{\mu_2}V\otimes...\otimes Sym^{\mu_m}V,\\
 Im(b_\mu) &=\bigwedge^{\mu_1'}V\otimes \bigwedge^{\mu_2'}V\otimes...\otimes \bigwedge^{\mu_l'}V.
\end{aligned}
\end{equation}
Set $c_\mu=a_\mu b_\mu\in \mathbb{C}[S_n]$, which is called a \emph{Young symmetrizer}.\\

\begin{definition}
Let $\mu$ be a partition of $n$ and $V$ a finite dimensional complex vector space. Denote the image of $c_\mu$ on $V^{\otimes n}$ by $\mathbb{S}_\mu V$. The functor $\mathbb{S}_\mu:V\mapsto \mathbb{S}_\mu V$ is called the \emph{Schur functor} corresponding to $\mu$. The \emph{plethysm} is a composition of two Schur functors.
\end{definition}
\begin{remark}
 The definition of the Schur functor only depends on the shape of the Young tableau $T$. There are two special Schur functors which are familiar to us: the symmetric power $Sym^n$, which corresponds to the partition $(n)$ and the exterior power $\bigwedge\limits^n$, which corresponds to the partition (1,...,1) with $n$ parts. Moreover, we can similarly define Schur functors on the category of vector bundles on an algebraic variety.
\end{remark}

Now we recall some basic properties of Schur functors and Schur polynomials. Fix a partition $\mu=(\mu_1,...,\mu_m)$. Let $f:V\rightarrow V$ be a diagonalizable linear map of $k-$dimensional vector space $V$ $(k\geq m)$ and $y_1,y_2,...,y_k$ the eigenvalues of $f$. Then the trace of the induced linear map $\mathbb{S}_\mu f:\mathbb{S}_\mu V \rightarrow \mathbb{S}_\mu V$ is exactly $s_\mu (y_1,y_2,...,y_k)$, where $s_{\mu}(x_1,...,x_k)$ is the Schur polynomial defined as follows:
$$
s_{\mu}(x_1,...,x_k)=\frac{det((x_j^{\mu_i+k-i})_{i,j})}{det((x_j^{k-i})_{i,j})}.
$$
The Schur polynomial can be written as (\cite[Equation (A.19)]{fulton2013representation})
\begin{equation}\label{Schurpoly}
  s_\mu (x_1,...,x_k)=M_\mu+\sum_{\gamma < \mu} K_{\mu \gamma}M_\gamma,
\end{equation}
where
\begin{equation}\nonumber
  M_{\gamma}=\sum_{\beta}X^{\beta},
\end{equation}
summed over all distinct permutations $\beta=(\beta_1,...,\beta_k)$ of the partition $\gamma=(\gamma_1,...,\gamma_k)$ with $X^\beta=x_{1}^{\beta_1}x_{2}^{\beta_2}...x_{k}^{\beta_k}$ and $K_{\mu\gamma}$ are Kostka numbers. There is a one-to-one correspondence between the monomials in the displayed equation (\ref{Schurpoly}) of $s_\mu(x_1,x_2,...,x_k)$ and 1-dimensional eigenspace of $\mathbb{S}_\mu f$.

\begin{proposition}[{\cite[P160]{macdonald1998symmetric}}]
\emph{Let $\omega$ and $\mu$ be two partitions. Then there is a decomposition:
\begin{center}
  $\mathbb{S}_\mu \circ \mathbb{S}_\omega =\bigoplus\limits_\nu \mathbb{S}_\nu^{\oplus a_{\mu\omega}^{\nu}}$
\end{center}
for some non-negative integer $a_{\mu\omega}^{\nu}$}.
\end{proposition}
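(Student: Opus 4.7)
The plan is to view $\mathbb{S}_\mu \circ \mathbb{S}_\omega$ as a polynomial functor on the category of finite dimensional complex vector spaces, and then appeal to the classification of polynomial representations of the general linear group.

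First I would recall that for any finite dimensional complex vector space $V$, the $GL(V)$-action on $V^{\otimes |\omega|}$ commutes with the $S_{|\omega|}$-action by permutation of tensor factors. In particular, the image of the Young symmetrizer $c_\omega$, which is by definition $\mathbb{S}_\omega V$, is a $GL(V)$-submodule of $V^{\otimes |\omega|}$, making it a polynomial representation of $GL(V)$ of degree $|\omega|$. Applying the same construction to the vector space $\mathbb{S}_\omega V$, equipped with its induced $GL(V)$-action, yields a polynomial $GL(V)$-representation $\mathbb{S}_\mu(\mathbb{S}_\omega V)$ of degree $|\mu|\cdot|\omega|$.

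Next I would invoke Schur-Weyl duality: the category of polynomial representations of $GL(V)$ is semisimple, with simple objects precisely the Schur modules $\mathbb{S}_\nu V$ indexed by partitions $\nu$ of length at most $\dim V$. Applied to the representation above, this immediately yields a decomposition
\begin{equation}\nonumber
\mathbb{S}_\mu(\mathbb{S}_\omega V) = \bigoplus_\nu \mathbb{S}_\nu V^{\oplus a_{\mu\omega}^\nu(V)}
\end{equation}
with non-negative integer multiplicities. To check that these multiplicities are independent of $V$ (once $\dim V$ is large enough to accommodate the relevant $\nu$), I would compare characters via the trace formula recalled just before equation (\ref{Schurpoly}): the character of $\mathbb{S}_\mu(\mathbb{S}_\omega V)$ equals $s_\mu$ evaluated at the eigenvalues of a diagonal torus element acting on $\mathbb{S}_\omega V$, and these eigenvalues are themselves universal polynomial expressions (governed by $s_\omega$) in the eigenvalues of the original action on $V$. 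Hence $a_{\mu\omega}^\nu$ depends only on the triple $(\mu,\omega,\nu)$.

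The main point requiring genuine representation-theoretic input, rather than pure formalities, is the non-negativity of the $a_{\mu\omega}^\nu$; this is not combinatorially transparent---indeed, explicit closed formulas for plethysm coefficients remain famously elusive---and it relies crucially on semisimplicity of the polynomial representation category. Once that is in hand, however, the rest of the argument reduces to matching the $GL(V)$-isotypic components, and the proposition follows.
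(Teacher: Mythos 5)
Your argument is correct: realizing $\mathbb{S}_\mu(\mathbb{S}_\omega V)$ as a polynomial $GL(V)$-representation and invoking semisimplicity of that category (with simples $\mathbb{S}_\nu V$) is exactly the standard source of the non-negativity of plethysm coefficients, and your character argument for independence of $V$ is sound. The paper itself gives no proof---it simply cites Macdonald---and your reasoning is essentially the justification underlying that reference, so there is nothing to compare beyond noting that you have supplied the proof the paper omits.
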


The plethysm problem is to determine the nonnegative integer $a_{\mu\omega}^{\nu}$. However, it is still an open problem to find a combinatorial formula for the coefficients $a_{\mu\omega}^{\nu}$ and there are only partial solutions in some special cases. We refer readers to \cite[I.8]{macdonald1998symmetric} for more details.

Here are four particular examples, which will be used in this paper.
\begin{example}[{\cite[I.8.Example 6]{macdonald1998symmetric}}]\label{plethysmexample}
\hspace*{\fill}
\begin{enumerate}[$(a)$]
  \item $Sym^r\circ Sym^2=\bigoplus\limits_{\mu \vdash 2r}\mathbb{S}_{\mu}$, summed over all the even partitions $\mu$ of $2r$(i.e. all the parts of $\mu$ are even.)\\
  \item $Sym^r\circ \bigwedge\limits^2=\bigoplus\limits_{\mu \vdash 2r}\mathbb{S}_{\mu'}$, summed over all the even partitions $\mu$ of $2r$. Here $\mu'$ represents the conjugation of the partition $\mu$.\\
  \item $\bigwedge\limits^r\circ \bigwedge\limits^2=\bigoplus\limits_{\mu}\mathbb{S}_{\mu}$, summed over all the partitions $\mu$ of the form $(c_1-1,...,c_m-1|c_1,...,c_m)$, where $c_1>...>c_m>0$ and $c_1+...+c_m=r$.
  \item $\bigwedge\limits^r\circ Sym^2=\bigoplus\limits_{\mu}\mathbb{S}_{\mu'}$, summed over all the partitions $\mu$ of the form $(c_1-1,...,c_m-1|c_1,...,c_m)$, where $c_1>...>c_m>0$ and $c_1+...+c_m=r$.
\end{enumerate}
\end{example}

\section{pseudoeffective thresholds of tangent bundles}\label{PT}
Let $X$ be a smooth projective variety and $A$ an ample divisor on $X$. The pseudoeffective threshold $\Delta(X,A)$ defined in Definition \ref{bignessdefect} is a number measuring the bigness of the tangent bundle of $X$ with respect to the ample divisor $A$. In this section, we will give some properties of $\Delta(X,A)$ and give a characterization of projective spaces and hyperquadrics in terms of $\Delta(X,A)$(see Theorem \ref{picardnumber1}).

\begin{definition}
    Let $X$ be a smooth projective variey of dimension $n$ and $A$ an ample divisor on $X$. Let $\mathscr{E}$ be a torsion free sheaf on $X$. We define the slope of $\mathscr{E}$ with respect to $A$ to be $\mu_A (\mathscr{E})=\frac{c_1(\mathscr{E})\cdot A^{n-1}}{rk(\mathscr{E})}$.
\end{definition}

\begin{proposition}\label{cohomology}
Let $X$ be a smooth projective variety and $A$ an ample divisor on $X$. Then
$$
  \Delta(X,A)=sup\{\frac{d}{r}|H^0(X,Sym^r T_X \otimes \mathcal{O}_X(-dA))\neq 0\}.
$$
\end{proposition}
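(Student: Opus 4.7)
The plan is to translate both sides into statements about divisor classes on $\mathbb{P}(T_X)$ using the standard dictionary between sections of the tautological line bundle and symmetric powers.

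Using the Grothendieck convention $\pi_*\mathcal{O}_{\mathbb{P}(T_X)}(r) = \Sym^r T_X$ together with the projection formula, I obtain
$$
H^0(X, \Sym^r T_X \otimes \mathcal{O}_X(-dA)) = H^0(\mathbb{P}(T_X), \mathcal{O}_{\mathbb{P}(T_X)}(r) \otimes \pi^*\mathcal{O}_X(-dA)),
$$
so nonvanishing is equivalent to effectivity of the integral class $r\xi - d\pi^*A$ on $\mathbb{P}(T_X)$. Denote the right-hand side of the proposition by $\tau$. The inequality $\tau \leq \Delta(X,A)$ follows at once: any effective integral class is pseudoeffective, so the positive rational multiple $\xi - (d/r)\pi^*A$ is pseudoeffective as well, and taking the supremum over all such $(r,d)$ gives the bound.

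For $\Delta(X,A) \leq \tau$, the crucial ingredient is the standard fact that $\xi + N\pi^*A$ is ample on $\mathbb{P}(T_X)$ for $N \gg 0$ (because $\xi$ is relatively ample over $X$ and $A$ is ample on $X$); fix such an $N$, taking it large enough that $N + \Delta(X,A) > 0$. Since the pseudoeffective cone is closed, $\xi - \Delta(X,A)\pi^*A$ itself lies in it. For any rational $\varepsilon' < \Delta(X,A)$ with $\varepsilon' > -N$, the identity
$$
\xi - \varepsilon'\pi^*A = \frac{N+\varepsilon'}{N+\Delta(X,A)}\bigl(\xi - \Delta(X,A)\pi^*A\bigr) + \frac{\Delta(X,A) - \varepsilon'}{N+\Delta(X,A)}\bigl(\xi + N\pi^*A\bigr)
$$
expresses the left-hand side as a positive combination of a pseudoeffective class and an ample class, and is therefore big. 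Writing $\varepsilon' = d/r$, bigness of the $\mathbb{Q}$-class $\xi - (d/r)\pi^*A$ implies that some positive integer multiple $r\xi - d\pi^*A$ has a nonzero section, so $\tau \geq \varepsilon'$. Letting $\varepsilon' \nearrow \Delta(X,A)$ concludes the argument.

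The only delicate step is the upgrade from pseudoeffectivity to bigness required in order to produce genuine sections, and this is precisely what the auxiliary ample class $\xi + N\pi^*A$ enables. All other steps are routine unpacking of definitions via Grothendieck's projectivization and the projection formula.
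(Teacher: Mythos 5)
Your proof is correct, and the easy inequality $\tau\le\Delta(X,A)$ is handled exactly as in the paper, via the identification $H^0(X,Sym^r T_X\otimes\mathcal{O}_X(-dA))=H^0(\mathbb{P}(T_X),\mathcal{O}_{\mathbb{P}(T_X)}(r)\otimes\pi^*\mathcal{O}_X(-dA))$. For the reverse inequality the two arguments genuinely diverge. The paper chooses, for each $n$, a rational $p/q\ge\Delta(X,A)-\frac{1}{n}$ with $\xi-\frac{p}{q}\pi^*A$ effective and asserts directly that some integral multiple $qk\xi-pk\pi^*A$ carries a section; this leans on the implicit convention that ``effective'' for a rational class means some multiple has a section. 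You instead observe that $\xi-\Delta(X,A)\pi^*A$ is pseudoeffective by closedness of the pseudoeffective cone, and then perturb by the ample class $\xi+N\pi^*A$ to make $\xi-\varepsilon'\pi^*A$ big for \emph{every} rational $\varepsilon'<\Delta(X,A)$, from which sections of an integral multiple follow. Your route is slightly longer but more robust: it produces sections for every rational slope below the threshold rather than only for slopes already known to lie in the effective set, and it sidesteps the ambiguity in the paper's use of ``effective'' for non-integral classes. One cosmetic point: in the first inequality you pass through pseudoeffectivity of $\xi-\frac{d}{r}\pi^*A$, but $\Delta(X,A)$ is defined in terms of effectivity, so it is cleaner to say that a section of $r\xi-d\pi^*A$ exhibits $\xi-\frac{d}{r}\pi^*A$ as an effective $\mathbb{Q}$-class, placing $\frac{d}{r}$ directly into the defining set; as written, your deduction would require $\Delta(X,A)$ to be the supremum over pseudoeffective classes.
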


\begin{proof}
  Let $\Delta '(X,A)$ denote the right hand side of the above equation. Since
\begin{center}
  $H^0(X,Sym^r T_X \otimes \mathcal{O}_X(-dA))=H^0(\mathbb{P}(T_X),\mathcal{O}_{\mathbb{P}(T_X)}(r)\otimes \pi^{*}\mathcal{O}_X(-dA))$,
\end{center}
we see that $\Delta '(X,A)\leq \Delta(X,A)$.

For every positive integer $n$, take a rational number $\frac{p}{q}$ such that $\frac{p}{q}\geq \Delta(X,A)-\frac{1}{n}$ and $\xi-\frac{p}{q} \pi^*A$ is effective. It follows that
\begin{center}
  $H^0(\mathbb{P}(T_X),\mathcal{O}_{\mathbb{P}(T_X)}(qk)\otimes \pi^{*}\mathcal{O}_X(-pkA))\neq 0$
\end{center}
for some integer $k\in \mathbb{N}^+$. Thus
\begin{center}
  $H^0(X,Sym^{qk} T_X \otimes \mathcal{O}_X(-pkA))\neq 0$
\end{center}
and
\begin{center}
  $\Delta(X,A)-\frac{1}{n}\leq \frac{p}{q}\leq \Delta'(X,A)$
\end{center}
for every positive integer $n$. It follows that $\Delta(X,A)\leq \Delta'(X,A)$ and thus $\Delta(X,A)= \Delta'(X,A)$.
\end{proof}

Let $E$ be a vector bundle on $X$. By \cite[Proposition 2.1]{hsiao2015a} and \cite[Example 6.1.23]{lazarsfeld2004positivity}, $E$ is big if and only if
$$
H^0(X,Sym^r E\otimes L^{-1})\neq 0
$$
for some ample line bundle $L$ on $X$ and some positive integer $r$. As an immediate corollary, we have the following:
\begin{proposition}
Let $X$ be a smooth projective variety and $A$ an ample divisor on $X$. Then $T_X$ is big if and only if $\Delta(X,A)>0$.
\end{proposition}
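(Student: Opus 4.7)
The proposition is an immediate corollary of the two ingredients just stated, so my plan is to combine them directly. On one hand, Proposition \ref{cohomology} rewrites the threshold as
$$
\Delta(X,A)=\sup\Bigl\{\tfrac{d}{r}\;\Big|\;H^0(X,\mathrm{Sym}^r T_X\otimes \mathcal{O}_X(-dA))\neq 0\Bigr\}.
$$
On the other hand, by the cited criterion (\cite[Proposition 2.1]{hsiao2015a} together with \cite[Example 6.1.23]{lazarsfeld2004positivity}), a vector bundle $E$ on $X$ is big if and only if $H^0(X,\mathrm{Sym}^r E\otimes L^{-1})\neq 0$ for some $r>0$ and some ample line bundle $L$ on $X$.

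For the forward direction, I will assume $T_X$ is big and apply the criterion with $L=\mathcal{O}_X(A)$. This yields some $r>0$ with $H^0(X,\mathrm{Sym}^r T_X\otimes \mathcal{O}_X(-A))\neq 0$, and then the sup formula gives $\Delta(X,A)\geq \tfrac{1}{r}>0$.

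For the converse, assuming $\Delta(X,A)>0$, the definition of supremum produces a pair of positive integers $(r,d)$ with $\tfrac{d}{r}>0$ (in particular $d\geq 1$) such that $H^0(X,\mathrm{Sym}^r T_X\otimes \mathcal{O}_X(-dA))\neq 0$. Since $dA$ is ample, the criterion above immediately gives that $T_X$ is big. The only minor point to be careful about is the direction of the inequality in extracting $(r,d)$: one must note that a strictly positive supremum forces the existence (not merely a limit) of at least one pair with $d/r>0$, which is clear because the set on the right of Proposition \ref{cohomology} consists of rational numbers and its sup is positive only if it contains a positive element. There is no genuine obstacle here; the statement is essentially a dictionary translation between the tautological bundle language on $\mathbb{P}(T_X)$ and the symmetric power language on $X$.
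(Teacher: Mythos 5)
Your proof is correct and follows exactly the route the paper intends: the paper states this proposition without proof as an ``immediate corollary'' of Proposition \ref{cohomology} together with the Hsiao--Lazarsfeld cohomological criterion for bigness, which is precisely the combination you carry out. The small point you flag about extracting an actual positive element $d/r$ from a positive supremum is handled correctly.
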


Recall that a divisor $D$ on a smooth projective variety $X$ is pseudoeffective if it is in the closure of the effective cone of $X$. A vector bundle $E$ is called pseudoeffective if the tautological class $c_1(\mathcal{O}_{\mathbb{P}(E)}(1))$ is pseudoeffective on $\mathbb{P}(E)$. Using the pseudoeffective threshold, we give the following characterization of pseudoeffectivity of tangent bundles by \cite[Lemma 2.7]{SD18}.
\begin{proposition}
Let $X$ be a smooth projective variety and $A$ an ample divisor on $X$. Then $T_X$ is pseudoeffective if and only if $\Delta(X,A)\geq 0$.
\end{proposition}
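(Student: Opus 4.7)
The plan is to translate the definition of $\Delta(X,A)$ via the cohomological reformulation of Proposition \ref{cohomology} and then invoke the cited Lemma 2.7 of \cite{SD18} together with the closedness of the pseudoeffective cone. Write $\xi=c_1(\mathcal{O}_{\mathbb{P}(T_X)}(1))$. By Proposition \ref{cohomology}, $\Delta(X,A)\geq 0$ is equivalent to the statement that for every rational $\varepsilon>0$, there exist a positive integer $r$ and an integer $d$ with $d/r\geq -\varepsilon$ and $H^0(X,Sym^rT_X\otimes\mathcal{O}_X(-dA))\neq 0$. In terms of divisors on $\mathbb{P}(T_X)$, this means that the $\mathbb{Q}$-class $\xi+\varepsilon\pi^{*}A$ is $\mathbb{Q}$-effective (hence pseudoeffective) for every $\varepsilon>0$.

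For the direction ``$\Delta(X,A)\geq 0\Rightarrow T_X$ pseudoeffective'', I would argue as follows. From the reformulation above, $\xi+\varepsilon\pi^{*}A$ is pseudoeffective on $\mathbb{P}(T_X)$ for every $\varepsilon>0$. Since the pseudoeffective cone of $\mathbb{P}(T_X)$ is a closed convex cone in $N^1(\mathbb{P}(T_X))_{\mathbb{R}}$, letting $\varepsilon\to 0^{+}$ yields that $\xi$ itself is pseudoeffective, i.e.\ $T_X$ is pseudoeffective.

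For the converse ``$T_X$ pseudoeffective $\Rightarrow \Delta(X,A)\geq 0$'', the class $\xi$ is pseudoeffective on $\mathbb{P}(T_X)$ by definition, and $\pi^{*}A$ is nef (hence pseudoeffective). Thus $\xi+\varepsilon\pi^{*}A$ is pseudoeffective for every $\varepsilon>0$. Applying Lemma 2.7 of \cite{SD18}, which characterizes pseudoeffectivity of a vector bundle in terms of the existence of sections of symmetric powers after twisting by an arbitrarily small ample perturbation, one obtains positive integers $r,d$ with $d/r\geq -\varepsilon$ and $H^0(X,Sym^rT_X\otimes\mathcal{O}_X(-dA))\neq 0$, so $\Delta(X,A)\geq -\varepsilon$. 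Since $\varepsilon>0$ is arbitrary, $\Delta(X,A)\geq 0$.

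The main subtlety — and the reason an external lemma is needed at all — is the gap between ``effective'' in Definition \ref{bignessdefect} and ``pseudoeffective'' in the conclusion: closedness of the pseudoeffective cone takes care of one direction cleanly, but the other requires the cohomological criterion for pseudoeffectivity of a vector bundle provided by Lemma 2.7 of \cite{SD18}, which is the pseudoeffective analogue of the section-ring criterion used earlier in the section to characterize bigness.
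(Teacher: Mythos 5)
Your proof is correct and follows essentially the same route as the paper, which gives no argument beyond citing Lemma 2.7 of \cite{SD18}: you supply the easy direction via closedness of the pseudoeffective cone and the converse via that same lemma. One small slip: in the converse you should ask for a positive integer $r$ and an integer $d$ (typically negative) with $d/r\geq -\varepsilon$ and $H^0(X,Sym^rT_X\otimes\mathcal{O}_X(-dA))\neq 0$ --- requiring $d$ positive would assert bigness of $T_X$, which is more than pseudoeffectivity gives.
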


\begin{example}\label{egbignessdefect1}
\begin{equation}\nonumber
\begin{aligned}
&\Delta(\mathbb{P}^1, \mathcal{O}_{\mathbb{P}^1}(1))=2;\\
&\Delta(\mathbb{P}^n, \mathcal{O}_{\mathbb{P}^n}(k))=\frac{1}{k} \ \ \text{if} \ n\geq 2\ and\ k\geq 1.
\end{aligned}
\end{equation}
In fact, notice that $Sym^r T_{\mathbb{P}^n}$ is an irreducible homogeneous vector bundle for every positive integer $r$. Hence one can easily get that 
$$
H^0(X,Sym^r T_{\mathbb{P}^n} \otimes \mathcal{O}_{\mathbb{P}^n}(-d))\neq 0 \Leftrightarrow r\geq d.
$$
for $n\geq 2$ by the Borel-Weil-Bott's theorem. Furthermore, one can also use \cite[Theorem 1]{symofPn} to obtain the same nonvanishing result. 
\end{example}

\begin{example}\label{egbignessdefect2}
  $$
  \Delta(Q^n, \mathcal{O}_{Q^n}(1))=1.
  $$
  where $Q^n$ is an $n$-dimensional smooth hyperquadric and $(Q^1,\mathcal{O}_{Q^1}(1))=(\mathbb{P}^1,\mathcal{O}_{\mathbb{P}^1}(2))$. By \cite[Lemma 2]{druel2013characterizations}, $\Delta(Q^n,\mathcal{O}_{Q^n}(1))\leq 1$. On the other hand, using the isomorphism $T_{Q^n}(-1)\cong \Omega_{Q^n}(1)$ and \cite[Theorem B]{bogomolov2008symmetric}, there exists a natural number $r$ such that
\begin{center}
  $H^0(X,Sym^r T_{Q^n} \otimes \mathcal{O}_{Q^n}(-r))\neq 0$.
\end{center}
Hence $\Delta(Q^n, \mathcal{O}_{Q^n}(1))=1$.
\end{example}

\begin{proposition}\label{bound}
Let $X$ be a smooth projective variety and $A$ an ample divisor on $X$. If $(X,\mathcal{O}_X(A))\neq (\mathbb{P}^1,\mathcal{O}_{\mathbb{P}^1}(1))$, then $\Delta(X,A)\leq 1$.
\end{proposition}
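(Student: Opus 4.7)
The plan is to argue by contradiction using \cite[Theorem B]{druel2013characterizations}, quoted in the introduction: if $H^0(X, Sym^r T_X \otimes \mathcal{O}_X(-rA)) \neq 0$ for some $r \geq 1$, then $X$ is either a projective space or a smooth hyperquadric. It therefore suffices to convert the hypothesis $\Delta(X,A) > 1$ into a critical-exponent non-vanishing of this form, and then to observe that the listed possibilities rule out every pair $(X,A)$ except $(\mathbb{P}^1, \mathcal{O}_{\mathbb{P}^1}(1))$.

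By Proposition \ref{cohomology}, the assumption $\Delta(X,A) > 1$ provides positive integers $r < d$ and a non-zero section $s \in H^0(X, Sym^r T_X \otimes \mathcal{O}_X(-dA))$. The $k$-fold symmetric power of $s$ is a non-zero section of $Sym^k(Sym^r T_X) \otimes \mathcal{O}_X(-dk A)$. Composing with the natural multiplication map $Sym^k(Sym^r T_X) \to Sym^{rk} T_X$ yields a section $\tilde{s} \in H^0(X, Sym^{rk} T_X \otimes \mathcal{O}_X(-dkA))$; at any point $x$ where $s(x) \neq 0$ its image equals $s(x)^k$ viewed inside the polynomial ring $Sym^{\bullet} T_{X,x}$, and so is non-zero. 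Since $A$ is ample, the divisor $(d-r)k A$ is linearly equivalent to an effective divisor for $k \gg 0$; multiplying $\tilde{s}$ by a non-zero section of $\mathcal{O}_X((d-r)kA)$ produces a non-zero element of $H^0(X, Sym^{rk} T_X \otimes \mathcal{O}_X(-rkA))$, which is exactly the critical-exponent non-vanishing required by Druel's theorem.

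Druel's theorem then forces $X \cong \mathbb{P}^n$ or $X \cong Q^n$. Writing $A$ as a positive integer multiple of the minimal ample class (and, in the case $Q^2 \cong \mathbb{P}^1 \times \mathbb{P}^1$, as a bidegree with both components $\geq 1$), Examples \ref{egbignessdefect1} and \ref{egbignessdefect2}, together with the splitting $T_{\mathbb{P}^1 \times \mathbb{P}^1} \cong \mathcal{O}(2,0) \oplus \mathcal{O}(0,2)$, allow one to compute $\Delta(X,A)$ directly and conclude $\Delta(X,A) \leq 1$ in every sub-case apart from $(X, \mathcal{O}_X(A)) = (\mathbb{P}^1, \mathcal{O}_{\mathbb{P}^1}(1))$, contradicting $\Delta(X,A) > 1$. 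The main subtlety I anticipate is the non-vanishing of $\tilde{s}$, since the multiplication $Sym^k(Sym^r T_X) \to Sym^{rk} T_X$ has a substantial kernel in general; this is handled by reducing the check to a single point, where it amounts to the elementary fact that a non-zero homogeneous polynomial raised to a positive integer power is non-zero.
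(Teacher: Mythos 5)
Your proof is correct and takes essentially the same route as the paper's: reduce the assumption $\Delta(X,A)>1$ to the critical non-vanishing $H^0(X,Sym^{rk}T_X\otimes\mathcal{O}_X(-rkA))\neq 0$, invoke \cite[Theorem B]{druel2013characterizations}, and eliminate the resulting cases via Examples \ref{egbignessdefect1} and \ref{egbignessdefect2}. You merely spell out two steps the paper leaves implicit (the power-of-a-section argument producing the critical exponent, and the $Q^2\cong\mathbb{P}^1\times\mathbb{P}^1$ sub-case), both of which you handle correctly.
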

\begin{proof}
  If conversely $\Delta(X,A)>1$, then there exists a positive integer $r$ such that
$$
H^0(X,Sym^r T_X\otimes \mathcal{O}_X(-rA))\neq 0.
$$
By \cite[Theorem B]{druel2013characterizations}, $X\simeq \mathbb{P}^n$ or $Q^n$. If $X\simeq \mathbb{P}^n$, by Example \ref{egbignessdefect1}, $\mathcal{O}_{\mathbb{P}^n}(A)=\mathcal{O}_{\mathbb{P}^n}(1)$ and $\Delta(\mathbb{P}^n,\mathcal{O}_{\mathbb{P}^n}(1))=1$. If $X\simeq Q^n$, then by Example \ref{egbignessdefect2}, $\mathcal{O}_{Q^n}(A)=\mathcal{O}_{Q^n}(1)$ and $\Delta(Q^n,\mathcal{O}_{Q^n}(1))=1$.
\end{proof}

Example \ref{egbignessdefect1}, Example \ref{egbignessdefect2} and Proposition \ref{bound} prompt us to propose Conjecture \ref{conj}. We will prove this conjecture under the assumption that the Picard number of $X$ is 1 after some preparations.

We use the terminology of \cite{kollar1996rational} for families of rational curves. For a uniruled projective variety $X$, let $\mathcal{K}$ be an irreducible component of $RatCurve^n(X)$. We say $\mathcal{K}$ is a dominating family of rational curves or a covering family of rational curves if the corresponding universal family dominates $X$. A covering family $\mathcal{K}$ is called minimal if, for general point $x\in X$, the subfamily of $\mathcal{K}$ parametrizing curves through $x$ is proper.

Fix a minimal dominating family $\mathcal{K}$ of rational curves on $X$. Let $x\in X$ be a general point. Denote by $\mathcal{K}_x$ the normalization of the subscheme of $\mathcal{K}$ parametrizing rational curves passing through $x$. Define the tangent map $\tau_x: \mathcal{K}_x \dashrightarrow \mathbb{P}(T_x^* X)$ by sending a curve that is smooth at $x$ to its tangent direction at $x$. We denote by $\mathcal{C}_x$ the closure of the image of $\tau_x$ in $\mathbb{P}(T_x^* X)$. Let $\overline{\mathcal{K}}$ be the closure of $\mathcal{K}$ in $Chow(X)$. We say $x,y\in X$ are $\mathcal{K}$-equivalent if they can be connected by a chain of 1-cycles in $\overline{\mathcal{K}}$. By \cite[\uppercase\expandafter{\romannumeral4}.4.16]{kollar1996rational}, there is a proper surjective morphsim $\pi_0:X_0\rightarrow Y_0$ from a dense open subset of $X$ onto a normal variety whose fibers are $\mathcal{K}$-equivalence class. We call this map the $\mathcal{K}$-rationally connected quotient of $X$.

Let $\ell$ be the rational curve corresponding to a general point in $\mathcal{K}$ with normalization morphism $f:\mathbb{P}^1\rightarrow \ell \subset X$. We use $[f]$ or $[\ell]$ to denote the point in $\mathcal{K}$ corresponding to $\ell$. For general $[f]\in \mathcal{K}$, let $f^*T_X^+$ be the positive part of $f^*T_X$, i.e.
$$
f^*T_X^+=im[H^0(\mathbb{P}^1,f^*T_X(-1))\otimes \mathcal{O}_{\mathbb{P}^1}(1)\rightarrow f^*T_X].
$$
By \cite[Proposition 2.3]{hwang2001geometry}, $\mathbb{P}((f^*T_X^+)_x^*)$ is the projective tangent space of $\mathcal{C}_x$ at $\tau_x([f])$ in $\mathbb{P}(T_x^* X)$ if $[f]\in \mathcal{K}_x$ is general.

The proof of the following proposition is similar to that of \cite[Theorem 6.3]{araujo2008cohomological}.

\begin{proposition}\label{picardnumber1}
  Let $X$ be an $n-$dimensional smooth projective variety with $\rho(X)=1$ and let $A$ be an ample divisor on $X$. Suppose that $\Delta(X,A)\geq 1$. Then $(X,A)\simeq (\mathbb{P}^n,\mathcal{O}_{\mathbb{P}^n}(1))$or $(Q^n,\mathcal{O}_{Q^n}(1))(n\neq 2)$.
\end{proposition}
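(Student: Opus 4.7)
The plan is to deduce the conclusion from \cite[Theorem B]{druel2013characterizations} whenever the supremum in Proposition \ref{cohomology} is attained, and otherwise to derive a contradiction by restricting sections to minimal rational curves. By Proposition \ref{cohomology}, the hypothesis $\Delta(X,A) \ge 1$ produces, for every $\varepsilon > 0$, a pair of positive integers $(r,d)$ with $d/r > 1 - \varepsilon$ and a nonzero
$$
s \in H^0\bigl(X, \mathrm{Sym}^r T_X \otimes \mathcal{O}_X(-dA)\bigr).
$$
If some such pair satisfies $d \ge r$, then \cite[Theorem B]{druel2013characterizations} gives $(X,A) \simeq (\mathbb{P}^n, \mathcal{O}_{\mathbb{P}^n}(1))$ or $(Q^n, \mathcal{O}_{Q^n}(1))$, and we are done. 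So I may and will assume $d < r$ for every such section; in particular $\Delta(X,A) = 1$ and the supremum is not attained.

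Since $\Delta(X,A) > 0$, the tangent bundle $T_X$ is big and hence $X$ is uniruled. I then choose a minimal dominating family $\mathcal{K}$ of rational curves on $X$. For a general $[f] \in \mathcal{K}$ with normalization $f:\mathbb{P}^1 \to \ell \subset X$, minimality gives the splitting
$$
f^*T_X \simeq \mathcal{O}_{\mathbb{P}^1}(2) \oplus \mathcal{O}_{\mathbb{P}^1}(1)^{\oplus p} \oplus \mathcal{O}_{\mathbb{P}^1}^{\oplus (n-1-p)},
$$
where $p = \dim \mathcal{C}_x$ is the dimension of the VMRT at a general point $x$. Set $a := A \cdot \ell \in \mathbb{Z}_{\ge 1}$. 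Since $\mathcal{K}$ dominates $X$, for any nonzero $s$ as above a general member of $\mathcal{K}$ can be chosen so that $s|_\ell \neq 0$, whence
$$
H^0\bigl(\mathbb{P}^1,\, \mathrm{Sym}^r(f^*T_X) \otimes \mathcal{O}_{\mathbb{P}^1}(-da)\bigr) \neq 0.
$$
The Grothendieck splitting shows $\mathrm{Sym}^r(f^*T_X)$ decomposes into line bundles of degrees $2i + j$ with $i+j+k = r$, the maximum being $2r$. Hence $2r \ge da$, i.e. $a \le 2r/d$. Letting $d/r \to 1$ yields $a \le 2$, so $a \in \{1, 2\}$.

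Now I invoke $\rho(X) = 1$: write $A \equiv m H$ with $H$ the ample generator of $\mathrm{Pic}(X)/\mathrm{tors}$ and $m \ge 1$, so $a = m(H \cdot \ell) \le 2$ with $H \cdot \ell \ge 1$. If $m \ge 2$ one necessarily has $H \cdot \ell = 1$, and Example \ref{egbignessdefect1} together with a Cho--Miyaoka--Shepherd-Barron type argument forces $X \simeq \mathbb{P}^n$ with $A = m H$ and $\Delta(X,A) = 1/m < 1$, contradicting $\Delta(X,A) \ge 1$. So $A = H$, and $a \in \{1,2\}$ refers to lines or conics for the natural polarization. A finer restriction argument, tracking how $s|_\ell$ must factor through the positive part $f^*T_X^+ = \mathcal{O}(2) \oplus \mathcal{O}(1)^{\oplus p}$ once $d/r$ is close to $1$, forces $p$ to be either $n-1$ (giving $-K_X \cdot \ell = n + 1$) or $n-2$ (giving $-K_X \cdot \ell = n$). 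The characterization theorems for projective spaces (Cho--Miyaoka--Shepherd-Barron) and for hyperquadrics (Kebekus, or the analysis in the proof of \cite[Theorem B]{druel2013characterizations}) then identify $(X, A)$ with $(\mathbb{P}^n, \mathcal{O}(1))$ or $(Q^n, \mathcal{O}(1))$ with $n \neq 2$.

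The main obstacle is the boundary case $a = 2$: the crude degree inequality $2r \ge da$ alone does not distinguish this situation from genuine $\mathbb{P}^n$ and $Q^n$ geometry, and in particular does not pin down $p$. To overcome this one has to push the restriction argument further, observing that asymptotic nonvanishing with $d/r \to 1$ forces a section of $\mathcal{O}_{\mathbb{P}(T_X)}(1) - \pi^*A$ on a boundary ray of the pseudoeffective cone; restricting to $\mathbb{P}(f^*T_X)$ and exploiting the positive part $f^*T_X^+$ shows the VMRT must be as large as possible, after which the structural characterizations of $\mathbb{P}^n$ and $Q^n$ apply.
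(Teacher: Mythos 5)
Your opening reduction (if some nonzero section exists with $d\ge r$, apply \cite[Theorem B]{druel2013characterizations}; otherwise assume $d<r$ always) and the restriction of a nonzero section to a general minimal rational curve, giving $2r\ge d\,(A\cdot\ell)$ and hence $A\cdot\ell\le 2$, are both correct. But from that point on the text is a description of what would have to be proved rather than a proof. The inequality $2r\ge da$ carries no information about the splitting type of $f^*T_X$: when $a=2$ it reads $r\ge d$, which is exactly your standing assumption, and when $a=1$ it is even weaker. So nothing you have written ``forces $p$ to be either $n-1$ or $n-2$''; the appeal to Cho--Miyaoka--Shepherd-Barron would require $-K_X\cdot\ell\ge n+1$ for the minimal family, which you have not established; and the step ``if $m\ge 2$ \dots forces $X\simeq\mathbb{P}^n$'' is asserted, not argued. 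You acknowledge in your last paragraph that the ``finer restriction argument'' is the real content --- and it is precisely the part that is missing.

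The device that closes this gap in the paper is not to restrict the section itself but to convert it into a subsheaf of $T_X$: by \cite[Lemma 6.2]{araujo2008cohomological}, a nonzero section of $Sym^rT_X\otimes\mathcal{O}_X(-dA)$ yields a torsion-free $\mathscr{E}\subset T_X$ with $\mu_A(\mathscr{E})\ge\frac{d}{r}\mu_A(A)$. Choosing $d/r>\max\{\frac{3}{4},1-\frac{1}{n}\}$ and restricting $\mathscr{E}$ to a general minimal curve forces $\deg f^*A=1$ and leaves exactly three cases: $\rk\,\mathscr{E}=1$, where $\mathscr{E}$ is ample and Wahl's theorem gives $\mathbb{P}^n$; $f^*\mathscr{E}$ ample, where \cite[Proposition 2.7]{araujo2008cohomological} gives $\mathbb{P}^n$ because the $\mathcal{K}$-rationally connected quotient is trivial when $\rho(X)=1$; or $f^*T_X^+\hookrightarrow f^*\mathscr{E}$, which yields $\det(\mathscr{E}^{**})=\omega_X^{-1}$, hence $h^{n-\rk\mathscr{E}}(X,\mathcal{O}_X)\neq 0$; since $X$ is Fano this forces $\rk\,\mathscr{E}=n$, so $\omega_X^{-1}\simeq\mathcal{O}_X(A)^{\otimes n}$ and Kobayashi--Ochiai identifies the hyperquadric. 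Some step of this kind --- passing from the twisted symmetric tensor to a distribution on $X$ of controlled slope --- is what your outline needs and does not supply.
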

\begin{proof}
  By Proposition \ref{cohomology}, there exist positive integers $r$ and $d$ satisfying $max\{\frac{3}{4},1-\frac{1}{n}\}<\frac{d}{r}\leq 1$ and
\begin{center}
  $H^0(X,Sym^r T_X\otimes \mathcal{O}_X(-dA))\neq 0$.
\end{center}
By \cite[Corollary 8.6]{miyaoka1985deformations} or \cite[Theorem 0.1]{campana2011geometric}, $X$ is uniruled, hence X is Fano as $\rho(X)=1$. Let $\mathcal{K}$ be a minimal covering family of rational curves on $X$. Let $\mathscr{E}$ be the maximal destabilizing subsheaf of $T_X$. By \cite[Lemma 6.2]{araujo2008cohomological}, the slope $\mu_A(\mathscr{E})$ of $\mathscr{E}$ satisfies
\begin{equation}\nonumber
    \mu_A(\mathscr{E})\geq \frac{\mu_A(dA)}{r}=\frac{d}{r}\mu_A(A).
\end{equation}
Hence
\begin{equation}\nonumber
  \frac{deg\ f^*\mathscr{E}}{rk\ \mathscr{E}}\geq \frac{d}{r}deg\ f^*A
\end{equation}
for general elements $[f]\in \mathcal{K}$ as $\rho (X)=1$. By \cite[Chapter II, Proposition 3.7]{kollar1996rational}, $f^*\mathscr{E}$ is locally free  for general $f$. If $f^*\mathscr{E}$ is ample, observe that the $\mathcal{K}$-rationally connected quotient of $X$ is trivial since $\rho (X)=1$, hence $X$ is a projective space by \cite[Proposition 2.7]{araujo2008cohomological}. If $rk \mathscr{E}=1$, then $f^*\mathscr{E}$ is ample. Thus we may assume that $rk\ \mathscr{E}\geq 2$. Then $f^*\mathscr{E}$ is a subsheaf of $f^*T_X\simeq \mathcal{O}_{\mathbb{P}^1}(2)\oplus \mathcal{O}_{\mathbb{P}^1}(1)^{\oplus m}\oplus \mathcal{O}_{\mathbb{P}^1}^{\oplus (n-m-1)}$ and $\frac{deg\ f^*\mathscr{E}}{rk\ \mathscr{E}}\leq \frac{3}{2}$. If $deg\ f^*A\geq 2$, then
$$
\frac{deg\ f^*\mathscr{E}}{rk\ \mathscr{E}}\geq \frac{d}{r}deg\ f^*A>\frac{3}{4}\cdot 2=\frac{3}{2},
$$
which is a contradiction. Hence $deg\ f^*A=1$ and either $f^*\mathscr{E}$ is ample or $f^*\mathscr{E}\simeq \mathcal{O}_{\mathbb{P}^1}(2)\oplus \mathcal{O}_{\mathbb{P}^1}(1)^{\oplus (rk(\mathscr{E})-2)}\oplus \mathcal{O}_{\mathbb{P}^1}$ since $\frac{d}{r}>1-\frac{1}{n}$.

 If $f^*\mathscr{E}$ is not ample, then $\mathcal{O}_{\mathbb{P}^1}(2)\subset f^*\mathscr{E}$ for general $[f]\in \mathcal{K}$. Thus by \cite[Proposition 2.3]{hwang2001geometry}, $(f^*T_X^+)_y \subset (f^*\mathscr{E})_y$ for general $y\in \mathbb{P}^1$ and general $[f]\in \mathcal{K}$. Since $f^*\mathscr{E}$ is a subbundle of $f^*T_X$, we have an inclusion of sheaves $f^*T_X^+\hookrightarrow f^*\mathscr{E}$ and hence $det(f^*\mathscr{E})=f^*\omega_X^{-1}$. It follows that $det(\mathscr{E^{**}})=\omega_X^{-1}$ since $\rho (X)=1$, and thus $0\neq h^0(X,\wedge^{rk\mathscr{E}}T_X\otimes \omega_X)=h^{n-rk\mathscr{E}}(X,\mathcal{O}_X)$. Note that $X$ is Fano, hence $h^{n-rk\mathscr{E}}(X,\mathcal{O}_X)=0$ unless $n=rk(\mathscr{E})$. If $rk(\mathscr{E})=n$, then we have $\omega_X^{-1}\simeq \mathcal{O}_X(A)^{\otimes n}$. Therefore, $X$ is a hyperquadric by \cite{kobayashi1973characterizations}.
\end{proof}

\begin{remark}
 It is pointed out by St\'{e}phane Druel, Andreas Höring and Jie Liu that the Proposition \ref{picardnumber1} can be derived by \cite[Theorem 1.1]{MR3331171}. In fact, let $\mu_{A}^{max}(T_X)$ be the slope of the maximal destabilizing subsheaf of $T_X$ with respect to $A$. By \cite[Lemma 6.2]{araujo2008cohomological},  $\mu_{A}^{max}(T_X)\geq\mu_A(A)$. On the other hand, by \cite[Theorem 1.1]{MR3331171}, if $X$ is not a projective space, then $\mu_{A}^{max}(T_X)\leq\mu_A(A)$. Hence $\mu_{A}^{max}(T_X)=\mu_A(A)$. Then the result follows from \cite{kobayashi1973characterizations}.
\end{remark}

\section{multiplicity free actions}\label{sectionMFA}
Let $R$ be a linear reductive connected algebraic group over the complex number field and $V$ an $R$-module. Let $\mathcal{P}(V)=Sym(V^*):=\bigoplus\limits_{i\geq 0}Sym^i(V^*)$ be the polynomial algebra over $V$ and the action of $R$ on $V$ can be extended to $\mathcal{P}(V)$ naturally. So $\mathcal{P}(V)$ can be decomposed into a direct sum of irreducible $R$-modules. We say $V$ is multiplicity free if each irreducible component of $\mathcal{P}(V)$ has multiplicity one.

\begin{proposition}[{\cite[Theorem 6.2]{servedio1973prehomogeneous} and \cite[Theorem 1.2]{leahy1998classification}}]\label{equivalent-condition-of-MF}
The following statements are equivalent:
\begin{enumerate}[$(1)$]
  \item $V$ is a multiplicity free $R$-module;
  \item $V$ is a spherical $R$-variety, i.e. there is a Borel subgroup $B$ of $R$ which acts on $V$ with an open orbit;
  \item $R$ has only finitely many orbits in $V$.
\end{enumerate}
\end{proposition}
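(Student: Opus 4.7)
The plan is to prove the two equivalences $(1)\Leftrightarrow(2)$ and $(2)\Leftrightarrow(3)$ separately, after fixing a Borel subgroup $B=TU\subset R$ with unipotent radical $U$, maximal torus $T$, and character lattice $X^{*}(T)$.

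For $(1)\Leftrightarrow(2)$ I would use the Vinberg--Kimelfeld criterion. The key observation is that, for any rational $R$-module $W$ and any dominant weight $\lambda$, the multiplicity of the irreducible $V_{\lambda}$ in $W$ equals $\dim W^{U}_{\lambda}$, the $\lambda$-weight space of the $U$-invariants, because $V_{\lambda}^{U}$ is exactly the one-dimensional highest weight line. Applied to $W=\mathcal{P}(V)$, this reformulates condition (1) as the statement that every $T$-weight space of the graded $T$-algebra $\mathcal{P}(V)^{U}$ is at most one-dimensional. A $T$-algebra of this form is forced to be isomorphic to the semigroup algebra of a finitely generated submonoid of $X^{*}(T)$, so the affine quotient $Y:=\mathrm{Spec}\,\mathcal{P}(V)^{U}$ is a toric $T$-variety. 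Since being toric is the same as $T$ having a dense orbit, and since the categorical quotient $V\to Y$ identifies generic $B$-orbits on $V$ with generic $T$-orbits on $Y$, the condition is equivalent to $B$ having a dense orbit on $V$, which is (2).

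For $(2)\Rightarrow(3)$ I would invoke Brion's theorem that a spherical $R$-variety has only finitely many $B$-orbits. Each $R$-orbit $Rv$ is locally closed and $B$-invariant, and its closure is again spherical as an $R$-variety, so $Rv$ itself decomposes into finitely many $B$-orbits; hence $V$ has only finitely many $R$-orbits. Conversely, for $(3)\Rightarrow(2)$, assume $R$ has only finitely many orbits on $V$. Then there is a unique open orbit $Rv_{0}\cong R/R_{v_{0}}$, and since $V$ contracts to $0$ under scaling and $R$ is reductive, the GIT quotient $\mathrm{Spec}\,\mathcal{P}(V)^{R}$ collapses to a point, giving $\mathcal{P}(V)^{R}=\mathbb{C}$. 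From here I would apply the Akhiezer-type theorem (specialized to the affine/prehomogeneous setting by Servedio and Leahy) that for a reductive group acting on an affine variety, finite orbit number implies sphericality. The argument proceeds via Luna's local structure theorem, reducing the existence of a dense $B$-orbit on $V$ to an analysis of how $B$ meets the generic stabilizer $R_{v_{0}}$.

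The main obstacle is the direction $(3)\Rightarrow(2)$. While it is immediate that finite $R$-orbit number produces a dense $R$-orbit, upgrading this to a dense $B$-orbit requires structural information about the generic stabilizer $R_{v_{0}}$ that does not follow from a naive dimension count, since a $G$-variety with finitely many $G$-orbits need not split into finitely many $B$-orbits without further input. This is precisely why the proof relies on the deeper local structure machinery of Servedio and Leahy, whose prehomogeneous vector space framework makes the sphericality conclusion tractable.
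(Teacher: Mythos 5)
The paper offers no proof of this proposition at all --- it is quoted with attributions to Servedio and Leahy --- so your attempt has to stand on its own. Your sketch of $(1)\Leftrightarrow(2)$ and of $(2)\Rightarrow(3)$ is essentially sound: the first is the Vinberg--Kimelfeld criterion, the second follows from Brion's (and Vinberg's) finiteness of $B$-orbits on spherical varieties. For $(1)\Leftrightarrow(2)$, though, the detour through the toric quotient $\operatorname{Spec}\mathcal{P}(V)^{U}$ is heavier than necessary and hides a real subtlety (the generic fibre of $V\to V/\!/U$ need not be a single $U$-orbit a priori); the cleaner route is Rosenlicht plus unique factorization: $B$ has no dense orbit iff there is a nonconstant $B$-invariant rational function $p/q$, and writing it in lowest terms in the UFD $\mathcal{P}(V)$ forces $p$ and $q$ to be linearly independent highest weight vectors of the same weight, i.e.\ a repeated multiplicity.

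The genuine gap is $(3)\Rightarrow(2)$. You reduce it to an ``Akhiezer-type theorem that for a reductive group acting on an affine variety, finite orbit number implies sphericality,'' but no such general theorem exists: a semisimple group $R$ acting on itself by left translation is an affine $R$-variety with exactly one orbit, yet it is not spherical since $\dim B<\dim R$. (Akhiezer's actual theorem says $G/H$ is spherical iff \emph{every} $G$-embedding of $G/H$ has finitely many orbits; finiteness in a single embedding implies nothing.) The implication is true only because the action here is \emph{linear}, and the linearity must enter the argument in an essential way --- this is precisely the content of the cited Theorem 6.2 of Servedio, so deferring at this point to ``the local structure machinery of Servedio and Leahy'' is circular: you are invoking the statement to be proved. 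Your preliminary observation that $\mathcal{P}(V)^{R}=\mathbb{C}$ is correct (any invariant is constant on each line $\mathbb{C}^{*}v$, which meets only finitely many orbits, hence is constant), but it does not by itself produce a dense $B$-orbit. As written you have established $(1)\Leftrightarrow(2)$ and $(2)\Rightarrow(3)$, and the cycle does not close.
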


Let $B$ be a Borel subgroup of $R$, i,e. a maximal connected solvable algebraic group of $R$. Then $B$ can be written as the semidirect product $B=TN$, where $T$ is a maximal torus in $R$ and $N$ is a maximal unipotent subgroup of $R$. Let $\mathfrak{t}$ be the Lie algebra of $T$.

Let $h_{\lambda}\in \mathcal{P}(V)$ be a highest weight vector with highest weight $\lambda$. Regard $h_{\lambda}$ as a polynomial and suppose the irreducible decomposition of $h_{\lambda}$ is $h_{\lambda}=p_1^{m_1}...p_k^{m_k}$. Then each irreducible factor $p_j$ is a highest weight vector by \cite[Lemma 3.1.1]{benson2004multiplicity} and the irreducible decomposition of the polynomial algebra of $V$ is given by the following:

\begin{proposition}[{\cite[Proposition 3.3.1]{benson2004multiplicity}}]\label{pvd}
Let $\Lambda$ be the subset of $\mathfrak{t}^*$ consisting of all the highest weights of irreducible representations occurring in $\mathcal{P}(V)$. Let $\Lambda'=\{\lambda\in \Lambda:h_{\lambda}$ is irreducible$\}$. Then $\Lambda'$ is a finite $\mathbb{Q}$-linear independent subset of $\mathfrak{t}^*$ and $\Lambda$ is an additive semigroup freely generated by $\Lambda'$. Suppose that $\Lambda'=\{\omega_1,...,\omega_t\}$. Then we have the following irreducible decomposition:
$$
\mathcal{P}(V)=\bigoplus_{\lambda}V_{\lambda},
$$
where the sum is taken over all $\mathbb{N}$-linear combinations $\lambda=m_1\omega_1+...+m_t\omega_t$. The highest weight vector corresponding to $\lambda=m_1\omega_1+...+m_t\omega_t$ is $h_{\lambda}=h_{\omega_1}^{m_1}...h_{\omega_t}^{m_t}$.

\end{proposition}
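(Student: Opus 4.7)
The plan is to use the ring structure of $\mathcal{P}(V)$, which is a polynomial algebra and hence a unique factorization domain, together with multiplicity freeness to force a one-to-one correspondence between highest weights and products of ``prime'' highest weight vectors.

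First I would set up notation. Since $V$ is multiplicity free, for each $\lambda \in \Lambda$ the $\lambda$-isotypic component in $\mathcal{P}(V)$ is a single copy of $V_\lambda$, and hence the space of highest weight vectors of weight $\lambda$ is one-dimensional, spanned by some $h_\lambda$. Equivalently the subalgebra of $N$-invariants decomposes as a weight space decomposition $\mathcal{P}(V)^N = \bigoplus_{\lambda \in \Lambda} \mathbb{C}\, h_\lambda$. The first elementary observation is that $\Lambda$ is a sub-semigroup of $\mathfrak{t}^*$: for $\lambda,\mu \in \Lambda$, the product $h_\lambda h_\mu$ is an $N$-invariant of weight $\lambda+\mu$, and it is nonzero because $\mathcal{P}(V)$ is an integral domain; thus $\lambda+\mu \in \Lambda$ and $h_{\lambda+\mu}$ is a nonzero scalar multiple of $h_\lambda h_\mu$.

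Next I would exploit the lemma recorded just before the statement (via \cite[Lemma 3.1.1]{benson2004multiplicity}): every irreducible factor of a highest weight vector is itself a highest weight vector. Combined with the UFD property of $\mathcal{P}(V)$, this gives a canonical factorization $h_\lambda = c\, h_{\omega_{i_1}}^{m_1}\cdots h_{\omega_{i_s}}^{m_s}$ where the $\omega_{i_j}$ lie in $\Lambda'$ by the very definition of $\Lambda'$. Comparing weights shows $\lambda = m_1 \omega_{i_1}+\cdots+m_s \omega_{i_s}$, so $\Lambda$ is generated as a semigroup by $\Lambda'$.

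The main obstacle is the freeness of this generation, which amounts to showing $\Lambda'$ is $\mathbb{R}$-linearly independent in $\mathfrak{t}^*$ (once this is done, finiteness of $\Lambda'$ follows because $\mathfrak{t}^*$ is finite-dimensional). Suppose we had a nontrivial relation $\sum c_i \omega_i = 0$; separating positive and negative coefficients and clearing denominators, one obtains an equality
\[
\sum_{i \in I} a_i \omega_i \;=\; \sum_{j \in J} b_j \omega_j
\]
with $I\cap J = \emptyset$, $a_i,b_j \in \mathbb{N}$ and not all zero. Both sides represent the same element $\lambda \in \Lambda$, so by the one-dimensionality of the weight-$\lambda$ highest weight space the two products $\prod_{i\in I} h_{\omega_i}^{a_i}$ and $\prod_{j\in J} h_{\omega_j}^{b_j}$ are proportional in $\mathcal{P}(V)$. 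By definition of $\Lambda'$ each $h_{\omega_k}$ is irreducible, and they are pairwise non-associate since distinct $\omega_k$ give distinct weights; this yields two essentially different factorizations into irreducibles, contradicting unique factorization. Hence $\Lambda'$ is linearly independent and finite, $\Lambda$ is the free semigroup it generates, and the displayed decomposition with $h_\lambda = h_{\omega_1}^{m_1}\cdots h_{\omega_t}^{m_t}$ follows immediately from the semigroup statement and the identification of highest weight vectors carried out above.
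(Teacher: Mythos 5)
The paper does not prove this proposition---it is imported verbatim from Benson--Ratcliff, with the key ingredient (that every irreducible factor of a highest weight vector is again a highest weight vector) quoted just beforehand. Your argument is correct and is essentially the standard proof from that reference: one-dimensionality of each highest-weight space from multiplicity-freeness, the quoted factorization lemma, and unique factorization in $\mathcal{P}(V)$ to rule out a nontrivial relation among the $\omega_i$, so no gap to report.
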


\begin{definition}
In the setting of Proposition \ref{pvd}, we call $\omega_1,...,\omega_t$ the fundamental highest weights and $h_{\omega_1},...,h_{\omega_t}$ the fundamental highest weight vectors. The cardinality $t$ of the set $\Lambda'$ is called the $rank$ of this multiplicity free action. The degree of $\omega_i(1\leq i\leq t)$ is the $degree$ of $h_{\omega_i}$.
\end{definition}

Irreducible multiplicity free actions have been classified by Kac in \cite[Theorem 3]{kac1980some}. By Kac's classification, we can show the following:

\begin{proposition}\label{IHSS-MF}
Let $X=G/P$ be an IHSS and let $R$ be the reductive part of $P$. Then the action of $R$ on $T_X$ is multiplicity free.
\end{proposition}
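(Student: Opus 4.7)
My plan is to combine the classification of irreducible Hermitian symmetric spaces of compact type with Kac's classification of irreducible multiplicity-free actions (Proposition \ref{Kac}). Since $X=G/P$ is IHSS, the nilradical $\mathfrak{u}_{\mathfrak{p}}$ acts trivially on the fiber $T_{X,o}\cong \mathfrak{g}/\mathfrak{p}\cong \mathfrak{u}_{\mathfrak{p}}^{-}$, so this $P$-module descends to an \emph{irreducible} $R$-module (irreducibility is the defining feature of the IHSS case). It therefore suffices to show that each such tangent representation, as $X$ ranges over IHSS, lies in Kac's list.

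I would then proceed case by case through the six families of IHSS (see the table in Corollary \ref{corpt}), identifying in each case the pair $(R,T_{X,o})$ by reading off $\mathfrak{u}_{\mathfrak{p}}^{-}$ from the marked Dynkin diagram. For $Gr(a,a+b)$ one obtains $R=S(GL(a)\times GL(b))$ acting on $(\mathbb{C}^a)^{*}\otimes \mathbb{C}^b$; for the Lagrangian Grassmannian $Lag(n,2n)$, $R=GL(n)$ acting on $S^2\mathbb{C}^n$; for the spinor variety $\mathbb{S}_n$, $R=GL(n)$ acting on $\wedge^{2}\mathbb{C}^n$; for a hyperquadric $Q^n$, $R\cong Spin(n-2)\times \mathbb{C}^{*}$ acting on the standard $Spin(n-2)$-module twisted by a nontrivial character; for the Cayley plane $\mathbb{OP}^2$, $R\cong Spin(10)\times \mathbb{C}^{*}$ acting on a half-spin representation; and for $E_7/P_7$, $R\cong E_6\times \mathbb{C}^{*}$ acting on the minuscule $27$-dimensional $E_6$-module.

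Finally I would match each entry against Proposition \ref{Kac}. Every one of the above actions either appears directly in list (1) of Kac's theorem (for example the Lagrangian case as $S^2GL(n)$, the spinor case as $\wedge^2 GL(n)$ or $\wedge^2 SL(n)$ depending on parity, and the Grassmannian case as $SL(a)\otimes SL(b)$ for $a\neq b$ or $GL(n)\otimes SL(n)$ for $a=b$) or is obtained from a semisimple item in list (1) by tensoring with $\mathbb{C}^{*}$, hence lies in list (2); this covers the hyperquadric, the Cayley plane and $E_7/P_7$. The central torus factor of $R$ supplies exactly the $\mathbb{C}^{*}$-twist needed, arising from the fundamental weight dual to the marked simple root $\alpha_k$ defining $P=P_k$.

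The only genuine bookkeeping, and the closest thing to an obstacle, is correctly pinning down this central character and confirming that the semisimple part of $R$ acts as claimed in each case; both are straightforward inspections of the root datum of $(\mathfrak{g},\mathfrak{p})$ using the description of $\mathfrak{u}_{\mathfrak{p}}^{-}$ as the sum of negative root spaces whose root contains $\alpha_k$ with nonzero coefficient.
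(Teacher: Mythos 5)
Your proposal is correct and follows essentially the same route as the paper: identify the reductive (Levi) action on the isotropy representation for each of the six types of IHSS and match it case by case against Kac's list in Proposition \ref{Kac}, with the central torus of $R$ supplying the $\mathbb{C}^*$-twist needed to land in list (2). The only quibble is a harmless indexing slip in the hyperquadric case: for $Q^n=SO(n+2)/P_1$ the semisimple part of the Levi is $Spin(n)$ rather than $Spin(n-2)$, acting on its standard representation, and the matching with the entry $SO(n)\otimes\mathbb{C}^*$ (via the covering $Spin(n)\to SO(n)$, as the paper also notes) goes through unchanged.
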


If $X=G/P$ is an IHSS, it is well-known that the action of $P$ on the tangent bundle has finitely many orbits and thus Proposition \ref{IHSS-MF} can also be deduced from Proposition \ref{equivalent-condition-of-MF}.

\begin{definition}\label{defofrank}
Let $X=G/P$ be an IHSS.  The action of the reductive part of $P$ on the tangent bundle $T_X$ of $X$ is multiplicity free.  We define the $rank$ of $X$ to be the rank of this action, denoted by $rk(X)$. The $rank$ of an HSS is defined to be the sum of the ranks of all its irreducible components. The following is the list of ranks of IHSS (see \cite[Section 6.1,Table 3]{benson2004multiplicity}):
\end{definition}

\begin{table}[h]
 \renewcommand\arraystretch{1.1}
 \begin{tabular}{|c|c|c|c|c|c|c|}
 \hline
 $\mathbf{IHSS}$  & $Gr(a,a+b)$ &  \begin{tabular}[c]{@{}c@{}}$\mathbb{Q}^n$\\$(n\geq 3)$\end{tabular}  &  \begin{tabular}[c]{@{}c@{}}$Lag(n,2n)$\\$(n\geq 2)$\end{tabular}  &  \begin{tabular}[c]{@{}c@{}}$\mathbb{S}_n$\\$(n\geq 4)$\end{tabular}   & $\mathbb{OP}^2$   &  $E_7/P_7$ \\ \hline
 $\mathbf{rank}$  & $min\{a,b\}$           &  2                        &  $n$             &  $\lfloor\frac{n}{2}\rfloor$  &  2  &  3  \\ \hline
 \end{tabular}
\end{table}

\begin{remark}
There are some geometric descriptions of the rank of an IHSS, e.g.
  it is the number of the orbits of the action of the isotropy subgroup $P$ at a base point $x\in X$ on the tangent space $\mathbb{P}(T_x X)$ and it is also the length of this IHSS. For more detail, see \cite[Section 6.2]{hwang2005geometry}. There is also a definition of the rank of HSS in terms of differential geometry (see \cite[Chapter V, Section 6]{helgason1979differential}).
\end{remark}

\begin{remark}\label{irrdecom}
Let $X$ be an IHSS and let $\omega_1,...,\omega_t$ be the fundamental highest weights of $T_X^*$. Assume that the degree of $\omega_i$ is $n_i$. Proposition \ref{pvd} implies that the irreducible decomposition of $Sym^r T_X$ is
$$
Sym^r T_X=\bigoplus_{n_1i_1+n_2i_2+...+n_ti_t=r\atop i_1,i_2,...,i_t\geq 0}E_{i_1\omega_1+...+i_t\omega_t}.
$$

 In the next section, we will give all the fundamental highest weights and their degrees for the dual of tangent bundles of IHSS, which will enable us to use the Borel-Weil-Bott's theorem to compute their cohomologies.
\end{remark}

\begin{remark}
Let $R$ be a linear reductive connected algebraic group over the complex number field and $V$ an $R$-module. We say that $V$ is skew multiplicity free if each irreducible component of the exterior algebra $\bigwedge V=\bigoplus\limits_{i\geq 0}\bigwedge\limits^i V$ has multiplicity one. Irreducible skew multiplicity free actions also have been classified up to an equivalence by Pecher in \cite[Theorem 10]{SMF}. Unfortunately, if $V$ is skew multiplicity free, the set of highest weights of irreducible components of the exterior algebra $\bigwedge V$ has no semigroup structure and I don't know whether there is a geometric description of skew multiplicity free actions.

Let $X=G/P$ be an IHSS and and let $R$ be the reductive part of $P$. By the classification of irreducible skew multiplicity free actions, the action of $R$ on the tangent bundle $T_X$ is skew multiplicity free, which also implies that $T_X$ is weight multiplicity free, i.e. each weight of $T_X$ has multiplicity one by \cite[Proposition 4.5.3]{MFA}.
\end{remark}

\section{irreducible decompositions of symmetric products of tangent bundles of IHSS}\label{IDIHSS}
In this section, we calculate the irreducible decomposition of $Sym^r T_X$ for each IHSS $X=G/P_k$. First we establish an important lemma, which reduces the problem of representations of reductive groups to the semisimple case.  Let $U_S$ be a maximal unipotent subgroup of $S$, where $S$ is the semisimple part of $P_k$, then the ring $\mathcal{P}(\mathfrak{g}/\mathfrak{p}_k)^{U_S}$ of the invariant functions is  generated by finitely many algebraically independent elements by \cite[Table in Page 13]{MR698847}. Hence by using \cite[Table in Page 13]{MR698847}, we can obtain the irreducible decomposition of $Sym^r( \mathfrak{g}/\mathfrak{p}_k)$ as $S$-module and thus the irreducible decomposition of $Sym^r T_X$ can be determined.
But in the case of Grassmannian, Lagrangian Grassmannian and Spinor variety, we use a more direct approach to get the irreducible decomposition of $Sym^r T_X$.

\begin{lemma}\label{linear-relation}
Let $X=G/P_k$ be an IHSS. Then there exist positive rational numbers $q_1,...,q_n$ such that for each weight $\lambda=\sum\limits_{i=1}^{n}a_i\lambda_i$ of $T_X^{\otimes r}$, 
$$
q_1 a_1+q_2 a_2+...+q_n a_n=r.
$$ 
\end{lemma}
\begin{proof}
It is sufficient to check the case that $r=1$. Let $\lambda=\sum\limits_{i=1}^{n}a_i\lambda_i$ be a weight of $T_X$. Then $\lambda$ is a positive root of $G$ with coefficient of $\alpha_k$ positive. Since $X$ is a cominuscule homogeneous variety (See \cite[Sect.2.1]{MR2421317}), the coefficient of $\alpha_k$ in the longest root of $G$ is 1. Hence if we write $\lambda$ as a linear combination of simple roots, the coefficient of $\alpha_k$ in $\lambda$ is exactly 1. Thus
$$
(\alpha_k,\lambda_k)=(\lambda,\lambda_k)=\sum_{i=1}^{n}a_i(\lambda_i,\lambda_k)
$$
Let $\lambda_i=\sum\limits_{j=1}^{n}t_{ij}\alpha_j$. Then 
$$
q_i:=\frac{(\lambda_i,\lambda_k)}{(\alpha_k,\lambda_k)}=t_{ik}
$$
and $\sum\limits_{i=1}^{n}q_i a_i$=1.
By \cite[Sect.\uppercase\expandafter{\romannumeral3}.13, Exercise 8]{MR499562}, each $q_i$ is a positive rational number.
\end{proof}

The value of $q_i$ in the Lemma \ref{linear-relation} can be found in \cite[Plate \uppercase\expandafter{\romannumeral1}-\uppercase\expandafter{\romannumeral6}]{MR1890629}. 

\subsection{Grassmannian}\label{subsection5A}
Let $X=SL(n+1)/P_k \cong Gr(k,n+1)$ with $k\leq n+1-k$. The semisimple part of $P_k$ is $S=SL(k)\times SL(n+1-k)$. The tangent bundle of $X$ can be written as $T_X=E_{\lambda_1}\otimes E_{\lambda_n}$, where $E_{\lambda_1}$ is the dual of the universal bundle with highest weight $\lambda_1$ and $E_{\lambda_n}$ is the quotient bundle $\mathcal{O}_{X}^{\oplus (n+1)}/E_{\lambda_1}^{*}$ with highest weight $\lambda_n$. The highest weight of $T_X$ is $\lambda_1+\lambda_n$.

We have the following decomposition by \cite[Exercise 6.11]{fulton2013representation}:
\begin{equation}\label{decomposition of tensors}\nonumber
\begin{aligned}
  & Sym^r T_X=Sym^r(E_{\lambda_1}\otimes E_{\lambda_n})=\bigoplus_{\mu \vdash r}(\mathbb{S}_\mu E_{\lambda_1}\otimes \mathbb{S}_\mu E_{\lambda_n}), \\
  & \bigwedge^r T_X=\bigwedge^r(E_{\lambda_1}\otimes E_{\lambda_n})=\bigoplus_{\mu \vdash r}(\mathbb{S}_\mu E_{\lambda_1}\otimes \mathbb{S}_{\mu'} E_{\lambda_n}).
\end{aligned}
\end{equation}

Now we need to calculate the highest weight of each direct summand. In fact, each direct summand above is an irreducible $P_k$-module which can be deduced from the following result whose proof is similar to that of \cite[Proposition 15.15]{fulton2013representation}.

\begin{proposition}\label{highestweight}
  Let $\mu=(\mu_1,\mu_2,...,\mu_k)$ be a partition. Then $\mathbb{S}_\mu E_{\lambda_1}$ and $\mathbb{S}_\mu E_{\lambda_n}$ are irreducible homogeneous vector bundles with highest weights:
  \begin{center}
    $\mu_1\lambda_1+\mu_2(\lambda_2-\lambda_1)+...+\mu_k(\lambda_k-\lambda_{k-1})$
  \end{center}
 and
 \begin{center}
    $\mu_1\lambda_n+\mu_2(\lambda_{n-1}-\lambda_n)+...+\mu_k(\lambda_{n-k+1}-\lambda_{n-k+2})$,
 \end{center}
  respectively.
\end{proposition}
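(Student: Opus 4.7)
The plan is to deduce both claims from the classical Weyl construction for $GL$-representations, exactly as in Fulton--Harris, Proposition 15.15. First I would identify, as modules over the Levi $L$ of $P_k$ (concretely $L \simeq S(GL(k) \times GL(n+1-k))$), what the $P_k$-modules $E_{\lambda_1}$ and $E_{\lambda_n}$ really are. In the $\epsilon$-basis of the maximal torus of $SL(n+1)$ one has $\lambda_1 = \epsilon_1$ and $\lambda_n = -\epsilon_{n+1}$, and since both $\lambda_1$ and $\lambda_n$ are minuscule, the set of $L$-weights of $E_{\lambda_1}$ (resp.\ $E_{\lambda_n}$) equals the orbit of the highest weight under $W(L) = S_k \times S_{n+1-k}$, giving $\{\epsilon_1,\ldots,\epsilon_k\}$ (resp.\ $\{-\epsilon_{k+1},\ldots,-\epsilon_{n+1}\}$). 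In other words, $E_{\lambda_1}$ realizes the standard representation of the $SL(k)$-factor, while $E_{\lambda_n}$ realizes the dual of the standard representation of the $SL(n+1-k)$-factor, each with the unique central torus character reproducing these weights.

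Once this identification is in place, the proof of Fulton--Harris, Proposition 15.15 applies verbatim to either bundle. For $E_{\lambda_1}$, it exhibits $\mathbb{S}_\mu E_{\lambda_1}$ as an irreducible $L$-submodule of $E_{\lambda_1}^{\otimes |\mu|}$ with a highest weight vector given by applying the Young symmetrizer $c_\mu$ to $e_1^{\otimes \mu_1} \otimes e_2^{\otimes \mu_2} \otimes \cdots \otimes e_k^{\otimes \mu_k}$; reading off its weight in the $\epsilon$-basis gives $\mu_1 \epsilon_1 + \mu_2 \epsilon_2 + \cdots + \mu_k \epsilon_k$. Substituting $\epsilon_i = \lambda_i - \lambda_{i-1}$ (with the convention $\lambda_0 = 0$) immediately rewrites this as
$$
\mu_1 \lambda_1 + \mu_2(\lambda_2 - \lambda_1) + \cdots + \mu_k(\lambda_k - \lambda_{k-1}),
$$
which is the first claimed formula.

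The case of $E_{\lambda_n}$ is entirely parallel: after ordering its weights in decreasing dominance as $-\epsilon_{n+1} > -\epsilon_n > \cdots > -\epsilon_{k+1}$, the same Young-symmetrizer construction produces a highest weight vector of $\mathbb{S}_\mu E_{\lambda_n}$ of weight $-\mu_1 \epsilon_{n+1} - \mu_2 \epsilon_n - \cdots - \mu_k \epsilon_{n+2-k}$. The identities $-\epsilon_{n+1} = \lambda_n$ and $-\epsilon_{n+2-i} = \lambda_{n+1-i} - \lambda_{n+2-i}$ (for $2 \le i \le k$) then convert this into $\mu_1 \lambda_n + \mu_2(\lambda_{n-1} - \lambda_n) + \cdots + \mu_k(\lambda_{n-k+1} - \lambda_{n-k+2})$, as asserted.

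The only step requiring real care is the initial identification. In the convention of Section~2 the ``highest weight'' of a $P_k$-module simultaneously encodes the semisimple highest weight and the $\lambda_k$-component coming from the one-dimensional center of the Levi, so one must verify that the weights of $E_{\lambda_1}$ and $E_{\lambda_n}$ really are $\{\epsilon_j\}_{j=1}^{k}$ and $\{-\epsilon_j\}_{j=k+1}^{n+1}$ rather than some conjugate translation. This is precisely where the minuscule property of $\lambda_1$ and $\lambda_n$ is used; once it is nailed down, the rest of the argument reduces to the standard $GL$-Schur-functor computation.
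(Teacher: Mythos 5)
Your proposal is correct and follows essentially the same route as the paper: both arguments reduce to Fulton--Harris Proposition 15.15 by identifying $E_{\lambda_1}$ (resp.\ $E_{\lambda_n}$) with the standard representation of the $SL(k)$-factor (resp.\ the dual of the standard representation of the $SL(n+1-k)$-factor) of the Levi, so that $\mathbb{S}_\mu$ applied to it is an irreducible $P_k$-module, and then compute the top weight. The only cosmetic difference is that you extract the highest weight from an explicit Young-symmetrized highest weight vector, whereas the paper reads it off as the leading monomial $x_1^{\mu_1}\cdots x_k^{\mu_k}$ of the Schur polynomial $s_\mu(x_1,\dots,x_k)$ recording the weights of $\mathbb{S}_\mu E_{\lambda_1}$; the two computations are interchangeable.
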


\begin{proof}
  If we regard $E_{\lambda_1}$ as an $S$-module, then it is isomorphic to $V_{\lambda_1}$. We know that $\mathbb{S}_\mu V_{\lambda_1}$ is irreducible(\cite[Theorem 6.3(4)]{fulton2013representation}). Hence $\mathbb{S}_\mu E_{\lambda_1}$ is an irreducible $P_k$-module as well.

Now we determine the highest weight of $\mathbb{S}_\mu E_{\lambda_1}$. We observe that the weights of $E_{\lambda_1}$ are $\lambda_1,\lambda_2-\lambda_1,...,\lambda_k-\lambda_{k-1}$ and each weight space has dimension one. Moreover, each element of the Cartan subalgebra of $sl(n+1)$ acts diagonalizablely on $\mathbb{S}_\mu E_{\lambda_1}$. Hence there is a one-to-one correspondence between the monomials in equation (\ref{Schurpoly}) of $s_\mu(x_1,x_2,...,x_k)$ and 1-dimensional eigenspace of $\mathbb{S}_\mu f$. Thus the weights of $\mathbb{S}_\mu E_{\lambda_1}$ consist of all
\begin{equation}\nonumber
  \beta_1\lambda_1+\beta_2(\lambda_2-\lambda_1)+...+\beta_k(\lambda_k-\lambda_{k-1})
\end{equation}
each occuring as often as it does in the monomial $X^\beta=x_{1}^{\beta_1}x_{2}^{\beta_2}...x_{k}^{\beta_k}$ in the polynomial $s_\mu (x_1,x_2,...,x_k)$. Hence by the decomposition of Schur polynomial (equation (\ref{Schurpoly})), the highest weight of $\mathbb{S}_\mu E_{\lambda_1}$ is
 \begin{equation}\nonumber
    \mu_1\lambda_1+\mu_2(\lambda_2-\lambda_1)+...+\mu_k(\lambda_k-\lambda_{k-1}).
  \end{equation}
\end{proof}

\begin{remark}\label{decompositionGrass}
Proposition \ref{highestweight} implies that the highest weight of $\mathbb{S}_\mu E_{\lambda_1}\otimes \mathbb{S}_\mu E_{\lambda_n}$ is
\begin{equation}\nonumber
\begin{aligned}
&\ \ \ \ \mu_1\lambda_1+\mu_2(\lambda_2-\lambda_1)+...+\mu_k(\lambda_k-\lambda_{k-1})\\
&+\mu_1\lambda_n+\mu_2(\lambda_{n-1}-\lambda_n)+...+\mu_k(\lambda_{n-k+1}-\lambda_{n-k+2})\\
&=(\mu_1-\mu_2)(\lambda_1+\lambda_n)+...+(\mu_{k-1}-\mu_{k})(\lambda_{k-1}+\lambda_{n-k+2})+\mu_k(\lambda_k+\lambda_{n-k+1}).
\end{aligned}
\end{equation}
Let
$$
\begin{aligned}
  &i_j=\mu_j-\mu_{j+1},\ \ \ j=1...k-1, \\
  &i_k=\mu_k.
\end{aligned}
$$
Then the irreducible decomposition of $Sym^r T_X$ can be written as
$$
Sym^r T_X=\bigoplus_{i_1+2i_2+...+ki_k=r\atop i_1,...,i_k\geq 0}E_{i_1(\lambda_1+\lambda_n)+...+i_k(\lambda_k+\lambda_{n-k+1})}.
$$
Hence the fundamental highest weights of $T_X^*$ are $\lambda_j+\lambda_{n+1-j}$ with degree equal to $j\ (j=1,...,k)$.
\end{remark}
By the Borel-Weil-Bott theorem, we have the following result.
\begin{theorem}\label{Grassmannian}
Let $X=Gr(k,n+1)$. If $k<n+1-k$, then
$$H^0(X,Sym^r T_X \otimes \mathcal{O}_X(-d))\neq 0 \Leftrightarrow \lfloor\frac{r}{k}\rfloor\geq d;$$

If $k=n+1-k$, then $$H^0(X,Sym^r T_X \otimes \mathcal{O}_X(-d))\neq 0 \Leftrightarrow 2\lfloor\frac{r}{k}\rfloor\geq d.$$
\end{theorem}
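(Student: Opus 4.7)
\textbf{Proof Plan for Theorem \ref{Grassmannian}.}

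The plan is to combine the irreducible decomposition of $Sym^r T_X$ obtained in Remark \ref{decompositionGrass} with the Borel--Weil--Bott theorem. First I would recall that, since $X=SL(n+1)/P_k$ is the Grassmannian $Gr(k,n+1)$ in its Plücker embedding, the minimal ample generator $\mathcal{O}_X(1)$ of $\operatorname{Pic}(X)$ is the line bundle $\det E_{\lambda_1}=E_{\lambda_k}$, so twisting by $\mathcal{O}_X(-d)$ corresponds to subtracting the weight $d\lambda_k$. Combined with Remark \ref{decompositionGrass}, this gives
$$
Sym^r T_X\otimes \mathcal{O}_X(-d)=\bigoplus_{\substack{i_1+2i_2+\cdots+ki_k=r\\ i_1,\ldots,i_k\geq 0}} E_{\,i_1(\lambda_1+\lambda_n)+\cdots+i_k(\lambda_k+\lambda_{n-k+1})-d\lambda_k}.
$$

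Next I would apply the Borel--Weil--Bott theorem to each summand. For $H^0$ to be nonzero, the highest weight of the summand must be $G$-dominant (equivalently, the shifted weight lies in the interior of the fundamental chamber, all $\lambda_i$-coefficients being $\geq 0$; regularity is then automatic since $\delta$ has all coefficients $1$). So I need to decide for which tuples $(i_1,\ldots,i_k)$ the weight
$$
\mu_d:=\sum_{j=1}^{k} i_j(\lambda_j+\lambda_{n+1-j})-d\lambda_k
$$
is $G$-dominant, and then maximise over such tuples.

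Here the two cases split. If $k<n+1-k$, the indices $\{1,\ldots,k\}$ and $\{n+1-k,\ldots,n\}$ are disjoint, so the coefficient of $\lambda_k$ in $\mu_d$ is exactly $i_k-d$ while every other coefficient is one of $i_1,\ldots,i_{k-1}\geq 0$; thus $G$-dominance is equivalent to $i_k\geq d$. If $k=n+1-k$, then $\lambda_{n+1-k}=\lambda_k$, so the coefficient of $\lambda_k$ in $\mu_d$ is $2i_k-d$, and $G$-dominance becomes $2i_k\geq d$. In each case I would then observe that, subject to $i_1+2i_2+\cdots+ki_k=r$ and $i_j\geq 0$, the maximum of $i_k$ is $\lfloor r/k\rfloor$ (achieved by putting $i_k=\lfloor r/k\rfloor$, $i_1=r-k\lfloor r/k\rfloor$ and $i_2=\cdots=i_{k-1}=0$). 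Hence the existence of an admissible tuple with $i_k\geq d$ (resp.\ $2i_k\geq d$) is equivalent to $\lfloor r/k\rfloor\geq d$ (resp.\ $2\lfloor r/k\rfloor\geq d$), which is exactly the stated dichotomy.

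I do not foresee a genuine obstacle: the only point requiring care is the identification of $\mathcal{O}_X(1)$ with the weight $\lambda_k$ (rather than, say, $\lambda_1$), and the bookkeeping in the self-dual case $k=n+1-k$ where $\lambda_{n+1-j}$ and $\lambda_j$ coincide at $j=k$ and the coefficient of $\lambda_k$ doubles. Once these are handled, the theorem follows directly from Borel--Weil--Bott applied term-by-term to the decomposition.
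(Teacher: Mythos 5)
Your proposal is correct and follows essentially the same route as the paper: the paper derives the decomposition $Sym^r T_X=\bigoplus E_{i_1(\lambda_1+\lambda_n)+\cdots+i_k(\lambda_k+\lambda_{n-k+1})}$ in Remark \ref{decompositionGrass} and then simply invokes Borel--Weil--Bott, which is exactly the term-by-term dominance check (coefficient $i_k-d$, resp.\ $2i_k-d$, of $\lambda_k$, maximized at $i_k=[\frac{r}{k}]$) that you carry out explicitly. Your identification of $\mathcal{O}_X(1)$ with $E_{\lambda_k}=\det E_{\lambda_1}$ and the handling of the self-dual case $k=n+1-k$ are both consistent with the paper's later uniform treatment via Proposition \ref{fhw}.
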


\subsection{Hyperquadric}
\begin{lemma}[{\cite[Exercise 19.21]{fulton2013representation}} or {\cite[Table in page 13]{MR698847}}]\label{decomp}
  Let $\gamma_i(i=1,...,n)$ be the fundamental dominant weights of the simple Lie algebra of type $B_n$ or $D_n$. We have the following irreducible decomposition of $B_n$-module or $D_n$-module:
  $$
  Sym^r V_{\gamma_1}=\bigoplus_{i=0}^{\lfloor\frac{r}{2}\rfloor}V_{(r-2i)\gamma_1}.
  $$
\end{lemma}

By Lemma \ref{linear-relation}, we can show the following two lemmas.
\begin{lemma}\label{oddQ}
  Let $X=Q^{2n-1}\cong Spin(2n+1)/P_1\ (n\geq 2)$. Let $\lambda_i(i=1,2,...,n)$ be the fundamental dominant weights of the simple Lie algebra of type $B_n$. If $\lambda=\sum\limits_{i=1}^{n}a_i\lambda_i$ is the highest weight of an irreducible component of $T_X^{\otimes r}$. Then
  $$
  a_1=r-\sum_{i=2}^{n-1}a_i-\frac{1}{2}a_n.
  $$
\end{lemma}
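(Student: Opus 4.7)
The plan is to observe that the tangent bundle $T_X$ of $Q^{2n-1}=Spin(2n+1)/P_1$ is a very restricted $P_1$-module: all of its weights share a common ``first coordinate'' equal to $1$, and this single linear invariant determines exactly the constraint stated in the lemma.

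First I would identify the weights of $T_X$. Since $T_X$ corresponds to the $P_1$-module $\mathfrak{g}/\mathfrak{p}_1$, its weights are the positive roots of $B_n$ whose coefficient on the simple root $\alpha_1$ is positive. Writing the positive roots in the usual orthonormal basis $\epsilon_1,\dots,\epsilon_n$, the positive roots of $B_n$ are $\epsilon_i$ ($1\le i\le n$), $\epsilon_i-\epsilon_j$ and $\epsilon_i+\epsilon_j$ ($i<j$); since $\alpha_1=\epsilon_1-\epsilon_2$, a positive root has positive $\alpha_1$-coefficient if and only if its $\epsilon_1$-coordinate equals $1$. The resulting list is $\epsilon_1$, $\epsilon_1-\epsilon_j$ and $\epsilon_1+\epsilon_j$ for $2\le j\le n$, which has cardinality $1+2(n-1)=2n-1=\dim X$, as expected.

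Next I would exploit this observation tensorially: every weight of $T_X^{\otimes r}$ is a sum of $r$ weights of $T_X$, so each such weight has $\epsilon_1$-coordinate exactly $r$. In particular, if $\lambda=\sum_{i=1}^{n}a_i\lambda_i$ is the highest weight of any irreducible component of $T_X^{\otimes r}$, then the $\epsilon_1$-coordinate of $\lambda$ equals $r$.

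Finally I would convert this constraint into the stated formula using the standard expressions for the fundamental weights of $B_n$:
\begin{equation*}
\lambda_i=\epsilon_1+\cdots+\epsilon_i\ (1\le i\le n-1),\qquad \lambda_n=\tfrac{1}{2}(\epsilon_1+\cdots+\epsilon_n).
\end{equation*}
Reading off the $\epsilon_1$-coordinate of $\sum a_i\lambda_i$ yields $a_1+a_2+\cdots+a_{n-1}+\tfrac12 a_n=r$, which rearranges to $a_1=r-\sum_{i=2}^{n-1}a_i-\tfrac12 a_n$. There is no genuine obstacle here; the only point requiring care is the fractional coefficient in $\lambda_n$, which is the source of the $\tfrac12 a_n$ term and explains why the lemma is specific to the odd quadric rather than the even one.
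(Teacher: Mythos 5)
Your proof is correct and follows essentially the same route as the paper: both rest on the fact that any weight of $T_X^{\otimes r}$ is a sum of $r$ weights of $T_X$, these being the positive roots with positive $\alpha_1$-coefficient. You simply streamline the paper's linear-system bookkeeping (which writes $\lambda=\sum k_s\tau_s$ and solves for the coefficients) by working in the orthonormal basis $\epsilon_1,\dots,\epsilon_n$, where the relevant linear functional is visibly the $\epsilon_1$-coordinate, identically equal to $1$ on the weights of $T_X$.
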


\begin{lemma}\label{evenQ}
  Let $X=Q^{2n-2}\cong Spin(2n)/P_1(n\geq 3)$. Let $\lambda_i(i=1,2,...,n)$ be the fundamental dominant weights of the simple Lie algebra of type $D_n$. If $\lambda=\sum\limits_{i=1}^{n}a_i\lambda_i$ is the highest weight of an irreducible component of $T_X^{\otimes r}$. Then
  $$
  a_1=r-\sum_{i=2}^{n-2}a_i-\frac{1}{2}(a_{n-1}+a_n).
  $$
\end{lemma}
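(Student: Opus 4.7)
The plan is to mirror the proof of Lemma~\ref{oddQ}, substituting the $D_n$ root datum for that of $B_n$. First I would identify $T_X$ as the homogeneous vector bundle associated to the $P_1$-representation $\mathfrak{so}(2n)/\mathfrak{p}_1$; its weights are exactly the positive roots of $\mathfrak{so}(2n)$ in which $\alpha_1$ appears with positive coefficient. In the standard realization $\alpha_i = e_i - e_{i+1}$ for $i \le n-1$ and $\alpha_n = e_{n-1}+e_n$, these $2(n-1)$ weights split into two families, $\mu_j = e_1 - e_{j+1}$ for $j = 1,\ldots,n-1$, and $\nu_j = e_1+e_j$ for $j = 2,\ldots,n$, each occurring with multiplicity one (so that $\dim T_X = 2(n-1)$ as expected).

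Next I would expand each $\mu_j$ and $\nu_j$ in the fundamental weight basis, using $\lambda_i = e_1+\cdots+e_i$ for $i \le n-2$, $\lambda_{n-1} = \tfrac{1}{2}(e_1+\cdots+e_{n-1}-e_n)$, and $\lambda_n = \tfrac{1}{2}(e_1+\cdots+e_{n-1}+e_n)$. Away from the Dynkin fork one finds the uniform formulas $\mu_j = \lambda_1+\lambda_j-\lambda_{j+1}$ and $\nu_j = \lambda_1-\lambda_{j-1}+\lambda_j$; at the boundary the cases $\mu_1 = 2\lambda_1-\lambda_2$, $\mu_{n-2} = \lambda_1+\lambda_{n-2}-\lambda_{n-1}-\lambda_n$, $\mu_{n-1} = \lambda_1+\lambda_{n-1}-\lambda_n$, together with the symmetric counterparts $\nu_2 = \lambda_2$, $\nu_{n-1} = \lambda_1-\lambda_{n-2}+\lambda_{n-1}+\lambda_n$, and $\nu_n = \lambda_1-\lambda_{n-1}+\lambda_n$, require a separate but routine computation.

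Finally, an irreducible component of $T_X^{\otimes r}$ with highest weight $\lambda = \sum_i a_i\lambda_i$ arises as a nonnegative integer combination $\lambda = \sum_w k_w\,w$ of the weights of $T_X$ satisfying $\sum_w k_w = r$. Reading off the coefficient of each $\lambda_i$ on both sides yields a linear system in the $k_w$'s, exactly parallel to the one set up in Lemma~\ref{oddQ}. Forming the combination $r - \sum_{i=2}^{n-2}a_i - \tfrac{1}{2}(a_{n-1}+a_n)$ and substituting these expressions causes every $k_w$ except the one attached to $\mu_1$ (which contributes an extra $\lambda_1$) and to $\nu_2$ (which contributes no $\lambda_1$) to cancel, leaving precisely $a_1$. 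I expect the only real obstacle to be the bookkeeping required to treat the two spin nodes $\alpha_{n-1}$ and $\alpha_n$ symmetrically: the factor $\tfrac{1}{2}$ on $a_{n-1}+a_n$ arises because $\lambda_{n-1}$ and $\lambda_n$ each carry a factor $\tfrac{1}{2}$ in the $e_i$-basis, so only their sum enters cleanly into the coefficient extraction. This is the $D_n$ analogue of the single $\tfrac{1}{2}a_n$ term that appears in Lemma~\ref{oddQ}.
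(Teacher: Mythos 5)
Your proposal is correct and is exactly the argument the paper intends: the paper proves Lemma \ref{evenQ} only by saying ``similarly'' to Lemma \ref{oddQ}, and your adaptation to the $D_n$ root datum (listing the $2n-2$ weights $e_1\pm e_j$, rewriting them in the $\lambda_i$-basis, and extracting the coefficient of $e_1$) is the intended computation, with the correct conclusion that each weight $w=\sum_i c_i\lambda_i$ of $T_X$ satisfies $c_1+\sum_{i=2}^{n-2}c_i+\tfrac{1}{2}(c_{n-1}+c_n)=1$. The only nitpick is that for $n=3$ your ``generic'' boundary formulas $\mu_1=2\lambda_1-\lambda_2$ and $\nu_2=\lambda_2$ must be replaced by the fork-node versions $2\lambda_1-\lambda_2-\lambda_3$ and $\lambda_2+\lambda_3$, which does not affect the identity.
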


Combining Lemma \ref{decomp}, Lemma \ref{oddQ} and Lemma \ref{evenQ}, we can get the irreducible decompositions of symmetric products of the tangent bundles of hyperquadrics.
\begin{proposition}\label{dQ}
    Let $X$ be a smooth hyperquadric of dimension $\geq 3$. Then we have the following irreducible decomposition:
   $$
   Sym^r T_X=\left \{
   \begin{aligned}
    &\bigoplus_{i=0}^{\lfloor\frac{r}{2}\rfloor}E_{(r-2i)2\lambda_2+2i\lambda_1}\ , \ \ \ \ \ \ \ X=Q^3;\\
    &\bigoplus_{i=0}^{\lfloor\frac{r}{2}\rfloor}E_{(r-2i)(\lambda_2+\lambda_3)+2i\lambda_1}\ , \ \ X=Q^4;\\
    &\bigoplus_{i=0}^{\lfloor\frac{r}{2}\rfloor}E_{(r-2i)\lambda_2+2i\lambda_1}\ ,\ \ \ \ \ \ \ \ \text{otherwise}.
   \end{aligned}
   \right.
   $$
\end{proposition}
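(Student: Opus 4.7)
The plan is to apply Remark \ref{irrdecom} together with the second fundamental form of the embedding $Q^n\hookrightarrow \mathbb{P}^{n+1}$. Since the table following Definition \ref{defofrank} gives $rk(Q^n)=2$ for all $n\geq 3$, Remark \ref{irrdecom} yields
$$
Sym^r T_X = \bigoplus_{n_1 i_1 + n_2 i_2 = r,\ i_1, i_2 \geq 0} E_{i_1 \omega_1 + i_2 \omega_2}
$$
for fundamental highest weights $\omega_1, \omega_2$ of degrees $n_1, n_2$. The proof reduces to identifying these four data.

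The weight $\omega_1$ is necessarily the highest weight of the irreducible $P_1$-module $T_X$, forcing $n_1 = 1$. Using the explicit list of weights of $T_X$ from the proof of Lemma \ref{oddQ} (and the analogous list underlying Lemma \ref{evenQ}), a direct comparison under the Levi's dominance order yields $\omega_1 = 2\lambda_2$ for $Q^3$, $\omega_1 = \lambda_2 + \lambda_3$ for $Q^4$, and $\omega_1 = \lambda_2$ for $Q^n$ with $n \geq 5$.

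To identify $\omega_2$, my idea is to exploit the second fundamental form of $Q^n \hookrightarrow \mathbb{P}^{n+1}$. Because $Q^n$ is a smooth, non-degenerate quadric, the Hessian of its defining equation produces a nonzero $G$-equivariant morphism $\mathrm{II}\colon Sym^2 T_X \twoheadrightarrow N_{X/\mathbb{P}^{n+1}} \cong \mathcal{O}_X(2)$. Since $\mathcal{O}_X(2) \cong E_{2\lambda_1}$ is one-dimensional (hence irreducible) as a $P_1$-module, complete reducibility of $Sym^2 T_X$ (valid because $X$ is IHSS) exhibits $E_{2\lambda_1}$ as a direct summand. Since $2\lambda_1 \neq 2\omega_1$ in each of the three cases, this summand cannot coincide with $E_{2\omega_1}$, so it must equal $E_{\omega_2}$; therefore $\omega_2 = 2\lambda_1$ and $n_2 = 2$. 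An independent dimension check via Weyl's formula, matching $\dim E_{2\omega_1} + 1$ with $\dim Sym^2 T_X = \binom{n+1}{2}$, provides an alternative verification. Substituting $n_1 = 1,\ n_2 = 2,\ i_1 = r - 2i,\ i_2 = i$ into the general decomposition then yields the three explicit formulas in the statement.

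The principal obstacle is the rigorous identification of $(\omega_2, n_2) = (2\lambda_1, 2)$: the second-fundamental-form construction provides the upper bound $n_2 \leq 2$ by producing $E_{2\lambda_1}$ inside $Sym^2 T_X$, while irreducibility of $T_X$ (so $E_{2\lambda_1} \not\subset T_X = Sym^1 T_X$) forces $n_2 \geq 2$; together these pin down the remaining data.
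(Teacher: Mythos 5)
Your proof is correct, but it follows a genuinely different route from the paper's. The paper restricts $T_X$ to the semisimple Levi factor, where $T_X$ becomes the standard representation $V_{\gamma_1}$ of $\mathfrak{so}(n)$, applies the classical decomposition $Sym^r V_{\gamma_1}=\bigoplus_i V_{(r-2i)\gamma_1}$ (Proposition \ref{decomp}) to determine every coefficient of each highest weight except that of $\lambda_1$, and then recovers the $\lambda_1$-coefficient $a_i=2i$ from the linear relations of Lemmas \ref{oddQ} and \ref{evenQ}, treating $Q^3$ (via $\mathfrak{sl}_2$-plethysm) and $Q^4$ (via $Q^4\cong Gr(2,4)$) separately. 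You instead take the rank-$2$ multiplicity-free semigroup structure of Remark \ref{irrdecom} as the skeleton and only need to pin down the two generators: $\omega_1$ is forced to be the highest weight of $T_X$ with $n_1=1$ by irreducibility of $Sym^1T_X$, and $(\omega_2,n_2)=(2\lambda_1,2)$ follows from the $G$-equivariant surjection $\mathrm{II}\colon Sym^2T_X\to N_{X/\mathbb{P}^{n+1}}\cong\mathcal{O}_X(2)=E_{2\lambda_1}$ together with complete reducibility and the observation that $2\lambda_1\neq 2\omega_1$. All steps check out: the rank-$2$ input is imported from the classification (Proposition \ref{Kac} and the table after Definition \ref{defofrank}) exactly as the paper does, so there is no circularity, and your second-fundamental-form argument is really the equivariant repackaging of the Bogomolov--de Oliveira observation that the paper only mentions in a remark after the theorem. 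What the two approaches buy: the paper's explicit Levi-restriction also hands you the exterior-power decompositions $\bigwedge^rT_X$ recorded in the subsequent remark and is self-contained at the level of weights, whereas your argument is shorter, uniform in the dimension (apart from naming the highest weight of $T_X$ in low dimensions), and gives a conceptual geometric reason for the generator $2\lambda_1$ of degree $2$ --- namely the quadratic equation of $X$.
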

\begin{proof}
  Let $\gamma_i(i=1,2,..,n-1)$ be the fundamental dominant weights of $B_{n-1}$ or $D_{n-1}$. Notice that there are natural embeddings from the weight lattices of $B_{n-1}$ and $D_{n-1}$ into the weight lattices of $B_{n}$ and $D_{n}$ defined by $\gamma_i\mapsto \lambda_{i+1}$, respectively. Note that the highest weight of $T_X$ is $\lambda_2$ $($resp. $2\lambda_2,\lambda_2+\lambda_3)$ if $dim(X)>4$ $($resp. $dim(X)=3,dim(X)=4)$. If $dim(X)>4$, Proposition \ref{decomp} implies that
  $$
  Sym^r T_X=\bigoplus_{i=0}^{\lfloor\frac{r}{2}\rfloor}E_{(r-2i)\lambda_2+a_i\lambda_1},
  $$
  where each $a_i$ is some integer. Lemma \ref{oddQ} and Lemma \ref{evenQ} indicate that $a_i=2i$. If $X=Q^3$, by \cite[Exercise 11.14]{fulton2013representation} or by using the plethysm in Example \ref{plethysmexample}$(a)$, we have the following irreducible decomposition of $sl_2$-modules:
  $$
  Sym^rV_{2\gamma_1}=Sym^r(Sym^2V_{\gamma_1})=\bigoplus_{i=0}^{\lfloor\frac{r}{2}\rfloor}V_{(2r-4i)\gamma_1}.
  $$
  Hence the result follows from Lemma \ref{oddQ}. If $X=Q^4$. Note that $Q^4\simeq Gr(2,4)$ and the simple Lie algebra of type $D_3$ is exactly the simple Lie algebra of type $A_3$, hence the result follows from Remark \ref{decompositionGrass}.
\end{proof}

By the Borel-Weil-Bott's theorem, we have the following result.
\begin{theorem}
Let $X=Q^{m}(m\geq 3)$ be a smooth hyperquadric. Let $r$ be a positive integer. Then $H^p(X,Sym^r T_X\otimes \mathcal{O}_X(-d))\neq 0$ if and only if $p,r,d$ satisfy one of the following conditions:
\begin{enumerate}[$(a)$]
  \item $p=0$ and $d\leq 2\lfloor\frac{r}{2}\rfloor$;
  \item $p=1$ and $2\leq d\leq r+1$;
  \item $p=m-1$ and $r+m-1\leq d\leq 2r+m-2$;
  \item $p=m$ and $d\geq 2r+m-2\lfloor\frac{r}{2}\rfloor$.
 \end{enumerate}
 In particular,
  $$
   H^0(X,Sym^r T_X \otimes \mathcal{O}_X(-d))\neq 0 \Leftrightarrow \lfloor\frac{r}{2}\rfloor\geq \frac{d}{2}.
   $$
\end{theorem}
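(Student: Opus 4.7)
The plan is to apply Borel-Weil-Bott to each irreducible summand of $Sym^r T_X \otimes \sO_X(-d)$ supplied by Proposition \ref{dQ}. Since $\sO_X(1) = E_{\lambda_1}$, twisting by $\sO_X(-d)$ simply subtracts $d\lambda_1$ from the highest weight, so the computation reduces to Borel-Weil-Bott on each irreducible summand, indexed by $i \in \{0,1,\ldots,[r/2]\}$.

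For the generic case $m \geq 5$, each summand is $E_{(2i-d)\lambda_1 + (r-2i)\lambda_2}$. After adding $\delta$ and translating into the $\epsilon$-basis of $B_n$ (when $m = 2n-1$) or $D_n$ (when $m = 2n-2$), the resulting vector $\mu$ has first two coordinates $\mu_1 = r - d + n - \tfrac{1}{2}$ and $\mu_2 = r - 2i + n - \tfrac{3}{2}$ (with the $\tfrac{1}{2}$'s absent in the $D_n$ case), while the remaining coordinates $\mu_3,\ldots,\mu_n$ take the fixed standard values $n-k+\tfrac{1}{2}$ (resp.\ $n-k$). The decisive observation is that every positive root not involving $\epsilon_1$ pairs strictly positively with $\mu$ (since $r - 2i \geq 0$ and $n - k \geq 0$), so the cohomological index reduces to counting the negative pairings among $(\mu,\epsilon_1 \pm \epsilon_k)$ (and $(\mu,\epsilon_1)$ in the $B_n$ case).

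I would then catalog the thresholds in $d$ at which each such pairing switches sign or vanishes, yielding the singular values $d \in \{2i+1,\; r+2, r+3, \ldots, r+m-2,\; 2r-2i+m-1\}$ separating regular intervals. A direct case check shows that on these intervals the index takes only the four values $0, 1, m-1, m$: once $d$ enters the ``singular band'' $r+2 \leq d \leq r+m-2$, the value $\mu_1$ coincides with some $\pm \mu_k$ with $k \geq 3$, forcing $\mu$ to be singular for every $i$ simultaneously. Taking the union of regular intervals over $i$---using $i = [r/2]$ to maximize the $H^0$ and $H^m$ ranges and $i = 0$ to maximize the $H^1$ and $H^{m-1}$ ranges---produces exactly the four conditions (a)--(d) of the theorem.

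Finally, I would dispatch $X = Q^3$ and $X = Q^4$ separately. For $Q^3$ of type $B_2$, the factor $2\lambda_2$ in Proposition \ref{dQ} gives $\mu = (r-d+\tfrac{3}{2})\epsilon_1 + (r-2i+\tfrac{1}{2})\epsilon_2$, and the analogous pairing analysis on the four positive roots of $B_2$ again produces the four conditions. For $Q^4 \cong Gr(2,4)$ one can either apply Theorem \ref{Grassmannian} directly (checking the translation of the statement) or rerun the $D_3$ pairing analysis with $\lambda_2 + \lambda_3 = \epsilon_1 + \epsilon_2$ playing the role of $\lambda_2$. The main obstacle is the combinatorial bookkeeping certifying that no intermediate index $2 \leq p \leq m-2$ can arise; this hinges on the fact that $\mu_1$ is the only coordinate depending on $d$, so the walls $\mu_1 = \pm\mu_k$ for $k \geq 3$ are independent of $i$ and densely fill the singular band, preventing any additional negative pairings from accumulating without simultaneously producing a vanishing pairing.
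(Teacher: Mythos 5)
Your proposal is correct and takes essentially the same approach as the paper: both apply Borel--Weil--Bott to each summand $E_{(r-2i)\lambda_2+2i\lambda_1}$ of the decomposition in Proposition \ref{dQ} and show that no intermediate index $2\le p\le m-2$ occurs because the twisted weight is forced onto a wall (is singular) throughout the band $r+2\le d\le r+m-2$, with $Q^3$ and $Q^4$ handled separately. The only difference is in execution: you count negative pairings with positive roots in the $\epsilon$-basis uniformly for all $p$, whereas the paper carries out iterated simple reflections $\sigma_{\alpha_p}\cdots\sigma_{\alpha_1}$ in the fundamental-weight basis for $p\le n-1$ and then obtains the cases $p=m-1,m$ from $p=1,0$ via Serre duality and $T_X\simeq\Omega_X(2)$.
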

\begin{proof}
  We only consider the case that $X\simeq Q^{2n-2}(n\geq 4)$. For any simple root $\alpha$ of $\mathfrak{so}(2n)$, we use $\sigma_{\alpha}$ to represent the reflection with respective to the hyperplane orthogonal to $\alpha$. For $0\leq i\leq \lfloor\frac{r}{2}\rfloor$, let
  $$
  \begin{aligned}
  \xi_i&=(r-2i)\lambda_2+2i\lambda_1-d\lambda_1+\delta\\
       &=(2i+1-d)\lambda_1+(r-2i+1)\lambda_2+\lambda_3+...+\lambda_n.
  \end{aligned}
  $$
Then $H^0(X,Sym^r T_X\otimes \mathcal{O}_X(-d))\neq 0$ if and only if there exists $i$ such that $\xi_i$ is dominant, hence if and only if $2\lfloor\frac{r}{2}\rfloor\geq d$.

If there exists $i$ such that $\xi_i$ is regular of index $1$, then $2i+1-d<0$ and
$$
\sigma_{\alpha_1}(\xi_i)=(d-2i-1)\lambda_1+(r+2-d)\lambda_2+\lambda_3+...+\lambda_n
$$
is dominant. Hence $H^1(X,Sym^r T_X\otimes \mathcal{O}_X(-d))\neq 0$ if and only if $2\leq d\leq r+1$.

If $d\leq r+2$, then $\xi_i$ is singular or regular of index $\leq 1$. Now assume $d\geq r+3$. Then, for $2\leq p\leq n-3$, one can show that
$$
\begin{aligned}
&\ \ \ \sigma_{\alpha_p}\sigma_{\alpha_{p-1}}...\sigma_{\alpha_1}(\xi_i)\\
&=(r+1-2i)\lambda_1+\lambda_2+...+\lambda_{p-1}+(d-r-p)\lambda_p+(r+p+1-d)\lambda_{p+1}\\
&\ \ \ \ \ \ \ \ \ \ \ \ \ \ \ \ \ \ \ \ \ \ \ \ \ \ \ \ \ \ \ \ \ \ \ \ \ \  \ \ \ \ \ \ \ \ \ \ \ \ \ \  \ \ \ \ \ \ \ \ \ \ \ \ \ \ +\lambda_{p+2}+...+\lambda_n
\end{aligned}
$$
by induction on $p$ and
$$
\begin{aligned}
&\ \ \ \sigma_{\alpha_{n-2}}\sigma_{\alpha_{n-3}}...\sigma_{\alpha_1}(\xi_i)\\
&=(r+1-2i)\lambda_1+\lambda_2+...+\lambda_{n-3}+(d-r-n+2)\lambda_{n-2}+(r+n-1-d)\lambda_{n-1}\\
&\ \ \ \ \ \ \ \ \ \ \ \ \ \ \ \ \ \ \ \ \ \ \ \ \ \ \ \ \ \ \ \ \ \ \ \ \ \ \ \ \ \ \ \ \ \ \ \ \ \ \ \ \ \ \ \ \ \ \ \ \ \ \ \ \ \ \ \ \ \ \ \ \ \ \  +(r+n-1-d)\lambda_{n},\\
&\ \ \ \sigma_{\alpha_{n-1}}\sigma_{\alpha_{n-2}}...\sigma_{\alpha_1}(\xi_i)\\
&=(r+1-2i)\lambda_1+\lambda_2+...+\lambda_{n-3}+\lambda_{n-2}+(d-r-n+1)\lambda_{n-1}+(r+n-1-d)\lambda_{n}.
\end{aligned}
$$
Hence $\xi_i$ is singular for all $i$ if $r+2\leq d\leq r+n-1$. If $d\geq r+n$, then $\xi_i$ is singular or regular of index larger than $n-1$. Hence $H^p(X,Sym^r T_X\otimes \mathcal{O}_X(-d))=0$ for $2\leq p\leq n-1$. By the Serre duality and the fact that $T_X\simeq \Omega_X(2)$, we have
$$
H^p(X,Sym^r T_X\otimes \mathcal{O}_X(-d))\simeq H^{2n-2-p}(X,Sym^r T_X\otimes \mathcal{O}_X(d-2r-2n+2)).
$$
Hence we are done.
\end{proof}

\begin{remark}
Let $f(z_0,...,z_{n+1})=0$ be the defining equation of a smooth hyperquadric $Q^n(n\geq 2)$ in $\mathbb{P}^{n+1}$. One can deduce
\begin{center}
  $H^0(Q^n,Sym^{2r} T_{\mathbb{Q}^n} \otimes \mathcal{O}_{\mathbb{Q}^n}(-2r))\neq 0$
\end{center}
from the proof of \cite[Theorem B]{bogomolov2008symmetric}. In fact, any differential $\Omega \in Sym^{2r}[\mathbb{C}dz_0\oplus\mathbb{C}dz_1\oplus...\mathbb{C}dz_{n+1}]$ corresponds in a natural way to a homogeneous polynomial in $\mathbb{P}^{n+1}$. If it corresponds to the $r$-th power of $f$, then it will induce a nontrivial element in $H^0(X,Sym^{2r} \Omega_{\mathbb{Q}^n} \otimes \mathcal{O}_{\mathbb{Q}^n}(2r))\cong H^0(X,Sym^{2r} T_{\mathbb{Q}^n} \otimes \mathcal{O}_{\mathbb{Q}^n}(-2r))$ by
\cite[Proposition 1.5]{bogomolov2008symmetric}.
\end{remark}

\begin{remark}
By Proposition \ref{dQ}, if $dim(X)\geq 5$, the fundamental highest weights of $T_X^*$ are $\lambda_2$ and $2\lambda_1$ with degrees 1 and 2, respectively; if $dim(X)=4$, the fundamental highest weights of $T_X^*$ are $\lambda_2+\lambda_3$ and $2\lambda_1$ with degrees 1 and 2, respectively; if $dim(X)=3$, the fundamental highest weights of $T_X^*$ are $2\lambda_2$ and $2\lambda_1$ with degrees 1 and 2, respectively.
\end{remark}

\begin{remark}
We use the notations in Proposition \ref{decomp}. Notice that the exterior products of the $V_{\gamma_1}$ can be calculated explicitly. Hence by Lemma \ref{oddQ} and Lemma \ref{evenQ}, we can obtain irreducible decompositions of exterior products of $T_X$. We list the results as follows.

If $X=Q^{2n-1}(n\geq 3)$, then
$$
\bigwedge^r T_X=\left\{
\begin{aligned}
&E_{\lambda_{r+1}+(r-1)\lambda_1},\ \ \ \ r\leq n-2;\\
&E_{2\lambda_n+(n-2)\lambda_1},\ \ \ \ \ r= n-1;\\
&E_{2\lambda_n+(n-1)\lambda_1},\ \ \ \ \ r=n;\\
&E_{\lambda_{2n-r}+(r-1)\lambda_1},\ \ \ n+1\leq r\leq 2n-1.
\end{aligned}
\right.
$$
If $X=Q^{2n-2}(n\geq 4)$, then
$$
\bigwedge^r T_X=\left\{
\begin{aligned}
&E_{\lambda_{r+1}+(r-1)\lambda_1},\ \ \ \ \ \ \ \ \ \ \ \ \ \ \ \ \ \ \ \ \ \ \ \ \ \ \ \ \ r\leq n-3;\\
&E_{\lambda_n+\lambda_{n-1}+(n-3)\lambda_1},\ \ \ \ \ \ \ \ \ \ \ \ \ \ \ \ \ \ \ \ \ \ \ \ r= n-2;\\
&E_{2\lambda_n+(n-2)\lambda_1}\oplus E_{2\lambda_{n-1}+(n-2)\lambda_1},\ \ \ \ \ \ \ r=n-1;\\
&E_{\lambda_{2n-1-r}+(r-1)\lambda_1},\ \ \ \ \ \ \ \ \ \ \ \ \ \ \ \ \ \ \ \ \ \ \ \ \ \ n\leq r\leq 2n-2.
\end{aligned}
\right.
$$
If $X=Q^3$, then
$$
\bigwedge^r T_X=\left\{
\begin{aligned}
&E_{2\lambda_2},\ \ \ \ \ \ \ \ \ r=1;\\
&E_{2\lambda_2+\lambda_1},\ \ \ \ \ r=2;\\
&E_{3\lambda_1},\ \ \ \  \ \ \ \ \ \ r=3.
\end{aligned}
\right.
$$
If $X=Q^4$, then
$$
\bigwedge^r T_X=\left\{
\begin{aligned}
&E_{\lambda_2+\lambda_3},\ \ \ \ \ \ \ \ \ \ \ \ \ \ \ \ \ \ \ \ \ r=1;\\
&E_{2\lambda_2+\lambda_1}\oplus E_{2\lambda_3+\lambda_1},\ \ \ \ \ \ r=2;\\
&E_{\lambda_2+\lambda_3+2\lambda_1},\ \ \ \ \ \ \ \ \ \ \ \ \ \ \ \ r=3;\\
&E_{4\lambda_1},\ \ \ \ \ \ \ \ \ \ \ \ \ \ \ \ \ \ \ \ \ \ \ \ \ r=4.
\end{aligned}
\right.
$$
From this we can compute the cohomology $H^p(X,\bigwedge\limits^rT_X(-d))$, but we leave the details to the readers.
\end{remark}

\subsection{Lagrangian Grassmannian}
Let $X=Lag(n,2n)\cong Sp(2n)/P_n$. The semisimple part $S$ of $P_n$ is $SL(n)$. The dual of the universal bundle of X is $E_{\lambda_1}$ with highest weight $\lambda_1$. All the weights of $E_{\lambda_1}$ are
$$
\lambda_1,\ \lambda_2-\lambda_1,...\ ,\lambda_n-\lambda_{n-1}.
$$
Note that the tangent bundle $T_X$ is isomorphic to $Sym^2E_{\lambda_1}$, thus we have the following decompositions thanks to the plethysm in Example \ref{plethysmexample}:
$$
Sym^r T_X=Sym^r(Sym^2 E_{\lambda_1})=\bigoplus_\mu (\mathbb{S}_\mu E_{\lambda_1}),
$$
summed over all even partitions $\mu$ of $2r$(i.e.each parts of $\mu$ is even), and
$$
\bigwedge^r T_X=\bigwedge^r(Sym^2 E_{\lambda_1})=\bigoplus_\mu (\mathbb{S}_{\mu'} E_{\lambda_1}),
$$
summed over all the partitions $\mu$ of the form $(c_1-1,...,c_p-1|c_1,...,c_p)$, where $c_1>...>c_p>0$ and $c_1+...+c_p=r$.

By the similar arguments as those for Proposition \ref{highestweight}, we have the following result:
\begin{proposition}\label{weightsLag}
  Let $\mu=(\mu_1,\mu_2,...,\mu_n)$ be a partition. Then $\mathbb{S}_\mu E_{\lambda_1}$ is an irreducible homogeneous vector bundle with highest weight:
  \begin{equation}\nonumber
    \mu_1\lambda_1+\mu_2(\lambda_2-\lambda_1)+...+\mu_n(\lambda_n-\lambda_{n-1}).
  \end{equation}
\end{proposition}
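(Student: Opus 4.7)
The plan is to mimic the strategy of Proposition \ref{highestweight} almost verbatim, since the only change in going from the Grassmannian setting to the Lagrangian Grassmannian is that the semisimple part $S$ of the relevant parabolic is now $SL(n)$ (rather than $SL(k)\times SL(n+1-k)$), while the bundle $E_{\lambda_1}$ and its weights with respect to $\mathfrak{h}\subset\mathfrak{sp}(2n)$ have exactly the same form as before.

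First I would establish irreducibility. Restricted to $S=SL(n)$, the $P_n$-module $E_{\lambda_1}$ becomes the standard representation $V_{\lambda_1}$ of $SL(n)$. By the standard theory of Schur functors applied to the defining representation of $SL(n)$ (e.g.\ \cite[Theorem 6.3(4)]{fulton2013representation}), $\mathbb{S}_\mu V_{\lambda_1}$ is irreducible as an $SL(n)$-module. Since $X$ is an IHSS, the unipotent radical $U_{P_n}$ acts trivially on every completely reducible $P_n$-module, so irreducibility as an $S$-module is equivalent to irreducibility as a $P_n$-module. This gives the irreducibility of $\mathbb{S}_\mu E_{\lambda_1}$.

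Next I would compute the highest weight by the same weight-counting argument. The weights of $E_{\lambda_1}$ listed in the statement are $\lambda_1,\lambda_2-\lambda_1,\dots,\lambda_n-\lambda_{n-1}$, each with multiplicity one. A generic element of the Cartan subalgebra acts diagonalizably on $E_{\lambda_1}$ with these eigenvalues, so by the character identification recalled in the paragraph preceding equation $(\ref{Schurpoly})$, there is a one-to-one correspondence between monomials $X^\beta=x_1^{\beta_1}\cdots x_n^{\beta_n}$ appearing in $s_\mu(x_1,\dots,x_n)$ and one-dimensional weight spaces of $\mathbb{S}_\mu E_{\lambda_1}$, under which $X^\beta$ corresponds to the weight
\[
\beta_1\lambda_1+\beta_2(\lambda_2-\lambda_1)+\cdots+\beta_n(\lambda_n-\lambda_{n-1}).
\]
By equation $(\ref{Schurpoly})$, the unique leading monomial of $s_\mu$ is $x_1^{\mu_1}\cdots x_n^{\mu_n}$, corresponding to the weight $\mu_1\lambda_1+\mu_2(\lambda_2-\lambda_1)+\cdots+\mu_n(\lambda_n-\lambda_{n-1})$. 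This is the highest weight as claimed.

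There is no real obstacle here: the content is identical to the proof of Proposition \ref{highestweight}, with the single simplification that $E_{\lambda_1}$ is an irreducible representation of a single simple factor $SL(n)$ instead of one tensor factor attached to a product of two $SL$'s. The only point worth explicit mention is that the weight $\mu_1\lambda_1+\mu_2(\lambda_2-\lambda_1)+\cdots+\mu_n(\lambda_n-\lambda_{n-1})$ that we extract is dominant with respect to $S=SL(n)$ (which is automatic since $\mu_1\geq\mu_2\geq\cdots\geq\mu_n\geq 0$), so it is indeed the highest weight of an irreducible $P_n$-module in our conventions. Consequently I would simply refer the reader to the proof of Proposition \ref{highestweight} and indicate the trivial modifications.
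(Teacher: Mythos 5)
Your proposal is correct and follows exactly the route the paper intends: the paper itself proves Proposition \ref{weightsLag} only by the remark that the arguments are ``similar to those for Proposition \ref{highestweight},'' and your write-up is precisely that argument (irreducibility of $\mathbb{S}_\mu V_{\lambda_1}$ as an $SL(n)$-module, then the Schur-polynomial weight count using the listed weights $\lambda_1,\lambda_2-\lambda_1,\dots,\lambda_n-\lambda_{n-1}$). No discrepancies to report.
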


\begin{remark}
Let
$$
\begin{aligned}
  &i_j=(\mu_j-\mu_{j+1})/2,\ \ \ j=1,...,n-1, \\
  &i_k=\mu_n/2.
\end{aligned}
$$
By Proposition \ref{weightsLag}, we can rewrite the irreducible decomposition of $Sym^r T_X$ as
$$
Sym^r T_X=\bigoplus_{i_1+2i_2+...+ni_n=r\atop i_1,...,i_n\geq 0}E_{i_1\cdot2\lambda_1+i_2\cdot2\lambda_2+...+i_n\cdot 2\lambda_n}.
$$
We can also obtain the irreducible decomposition by using Lemma \ref{linear-relation}. Hence the fundamental highest weights of $T_X^*$ are $2\lambda_1,...,2\lambda_n$ with degree $1,...,n$, respectively.
\end{remark}
By the Borel-Weil-Bott's theorem, we have the following result.
\begin{theorem}
 $H^0(X,Sym^r T_X \otimes \mathcal{O}_X(-d))\neq 0 \Leftrightarrow 2\lfloor\frac{r}{n}\rfloor\geq d$.
\end{theorem}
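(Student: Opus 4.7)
The plan is to combine the irreducible decomposition of $Sym^r T_X$ established just before the theorem with the Borel--Weil--Bott theorem, and then reduce the vanishing question to an elementary numerical problem.

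First, I would identify the line bundle $\mathcal{O}_X(1)$. For the Lagrangian Grassmannian $X = Sp(2n)/P_n$, the ample generator of $\mathrm{Pic}(X)$ corresponds (as a $P_n$-module) to the fundamental weight $\lambda_n$, so $\mathcal{O}_X(-d) = L_{-d\lambda_n}$ as a homogeneous line bundle. Combining this with the decomposition
$$
Sym^r T_X = \bigoplus_{\substack{i_1+2i_2+\cdots+ni_n=r \\ i_1,\ldots,i_n\geq 0}} E_{2i_1\lambda_1+2i_2\lambda_2+\cdots+2i_n\lambda_n},
$$
we get
$$
Sym^r T_X \otimes \mathcal{O}_X(-d) = \bigoplus_{\substack{i_1+2i_2+\cdots+ni_n=r \\ i_1,\ldots,i_n\geq 0}} E_{2i_1\lambda_1+2i_2\lambda_2+\cdots+(2i_n-d)\lambda_n}.
$$

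Next I would apply Borel--Weil--Bott to each summand. An irreducible homogeneous bundle $E_\mu$ on $G/P$ has $H^0 \neq 0$ if and only if $\mu$ is a dominant weight. In our situation, the coefficient of $\lambda_j$ for $j<n$ is already $2i_j \geq 0$, so the condition reduces to $2i_n - d \geq 0$, i.e.\ $2i_n \geq d$. Thus $H^0(X, Sym^r T_X \otimes \mathcal{O}_X(-d)) \neq 0$ if and only if there exist non-negative integers $i_1, \ldots, i_n$ with $\sum_{j=1}^n j\, i_j = r$ and $2 i_n \geq d$.

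Finally, I would show this combinatorial condition is equivalent to $2\lfloor r/n \rfloor \geq d$. For the necessity, observe that any decomposition $\sum j i_j = r$ forces $n i_n \leq r$, hence $i_n \leq \lfloor r/n \rfloor$, so $d \leq 2 i_n \leq 2\lfloor r/n \rfloor$. For the sufficiency, write $r = mn + s$ with $m = \lfloor r/n \rfloor$ and $0 \leq s < n$: take $i_n = m$, and then either $s = 0$ (all other $i_j = 0$) or $1 \leq s \leq n-1$ (set $i_s = 1$, other $i_j = 0$). This gives a valid decomposition with $2 i_n = 2\lfloor r/n \rfloor \geq d$. No step looks genuinely difficult once the decomposition is in hand; the only mild care needed is checking the identification of $\mathcal{O}_X(1)$ with the weight $\lambda_n$ and being careful that the list of summands indeed enumerates all allowed tuples.
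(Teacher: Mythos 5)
Your proposal is correct and follows essentially the same route as the paper: the paper's proof is exactly to apply Borel--Weil--Bott to the decomposition $Sym^r T_X=\bigoplus_{i_1+2i_2+\cdots+ni_n=r}E_{2i_1\lambda_1+\cdots+2i_n\lambda_n}$ established in the preceding remark, with the dominance check reducing to $2i_n\geq d$ and the combinatorial observation that $\max i_n=[\frac{r}{n}]$. Your write-up just makes explicit the steps the paper leaves implicit.
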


\subsection{Spinor varieties}

 Let $X=Spin(2n)/P_n(n\geq 4)$ be a Spinor variety. The semisimple part $S$ of $P_n$ is $SL(n)$ as well. The dual of the universal bundle of X is $E_{\lambda_1}$ with highest weight $\lambda_1$. All the weights of $E_{\lambda_1}$ are
 $$
 \lambda_1,\ \lambda_2-\lambda_1,...\ , \lambda_{n-2}-\lambda_{n-3},\ \lambda_n+\lambda_{n-1}-\lambda_{n-2},\ \lambda_n-\lambda_{n-1}.
 $$

 Note the tangent bundle $T_X$ is isomorphic to $\bigwedge\limits^2 E_{\lambda_1}$, hence we have the following decompositions thanks to the plethysm in Example \ref{plethysmexample}:
\begin{equation}\nonumber
Sym^r T_X=Sym^r(\bigwedge\limits^2 E_{\lambda_1})=\bigoplus_\mu (\mathbb{S}_{\mu '} E_{\lambda_1}),
\end{equation}
summed over all even partitions $\mu$ of $2r$, and
$$
\bigwedge^r T_X=\bigwedge^r(\bigwedge^2 E_{\lambda_1})=\bigoplus_\mu (\mathbb{S}_{\mu} E_{\lambda_1}),
$$
summed over all the partitions $\mu$ of the form $(c_1-1,...,c_p-1|c_1,...,c_p)$, where $c_1>...>c_p>0$ and $c_1+...+c_p=r$.

By the similar arguments as those for Proposition \ref{highestweight}, we have the following result:
\begin{proposition}\label{wSpinor}
  Let $\mu=(\mu_1,\mu_2,...,\mu_n)$ be a partition. Then $\mathbb{S}_\mu E_{\lambda_1}$ is an irreducible homogeneous vector bundle with highest weight:
  \begin{equation}\nonumber
    \mu_1\lambda_1+\mu_2(\lambda_2-\lambda_1)+...+\mu_{n-2}(\lambda_{n-2}-\lambda_{n-3})+\mu_{n-1}(\lambda_n+\lambda_{n-1}-\lambda_{n-2})+\mu_{n}(\lambda_n-\lambda_{n-1}).
  \end{equation}
\end{proposition}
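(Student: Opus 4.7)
The plan is to follow the template of Proposition \ref{highestweight} verbatim, replacing the weight data of $E_{\lambda_1}$ for the Grassmannian by that for the spinor variety. Two points must be checked: the irreducibility of $\mathbb{S}_\mu E_{\lambda_1}$ as a $P_n$-module, and the identification of its highest weight via the Schur polynomial expansion.

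For irreducibility, I note that the semisimple part of $P_n$ is $SL(n)$, and because $X$ is an HSS the unipotent radical $U_{P_n}$ acts trivially on $E_{\lambda_1}$ and hence on any of its Schur functors. Viewed as an $SL(n)$-module, $E_{\lambda_1}$ is the standard $n$-dimensional representation $V_{\lambda_1}$, so by \cite[Theorem 6.3(4)]{fulton2013representation}, $\mathbb{S}_\mu E_{\lambda_1}$ is irreducible as an $SL(n)$-module and therefore as a $P_n$-module.

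For the highest weight, the weights of $E_{\lambda_1}$ are the $n$ weights listed just above the proposition; denote them $w_1,\ldots,w_n$ in the order given. Each appears with multiplicity one, and the Cartan subalgebra acts diagonalizably on $\mathbb{S}_\mu E_{\lambda_1}$. Exactly as in the proof of Proposition \ref{highestweight}, the correspondence between monomials in the expansion (\ref{Schurpoly}) of $s_\mu(x_1,\ldots,x_n)$ and one-dimensional eigenspaces of $\mathbb{S}_\mu f$ shows that the weights of $\mathbb{S}_\mu E_{\lambda_1}$ are precisely the sums $\sum_i \beta_i w_i$ (with multiplicities given by Kostka numbers), and the leading monomial $X^\mu$ contributes the weight
$$
\mu_1 w_1 + \mu_2 w_2 + \cdots + \mu_n w_n,
$$
which upon substituting the explicit $w_i$ produces the formula stated in the proposition.

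The only nontrivial point — and hence the main obstacle — is confirming that this weight is indeed the highest weight with respect to the ordering induced by the Borel subgroup of $P_n$ inherited from $Spin(2n)$. Since $\mathbb{S}_\mu E_{\lambda_1}$ has already been shown to be irreducible, this reduces to verifying that the displayed expression is $SL(n)$-dominant, and via pairing with the simple coroots of the Levi this boils down to the defining partition inequalities $\mu_1 \geq \mu_2 \geq \cdots \geq \mu_n \geq 0$, which are immediate.
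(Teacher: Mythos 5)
Your proposal is correct and follows exactly the route the paper takes: the paper's proof of this proposition consists of the single sentence ``by the similar arguments as those for Proposition \ref{highestweight},'' and you have simply written out those arguments (irreducibility of $\mathbb{S}_\mu$ of the standard $SL(n)$-module, plus reading off the leading monomial $X^\mu$ of the Schur polynomial against the listed weights of $E_{\lambda_1}$). The only cosmetic remark is that the triviality of the $U_{P_n}$-action on $E_{\lambda_1}$ holds because $E_{\lambda_1}$ is by definition an irreducible $P_n$-module, not because $X$ is an HSS, but this does not affect the argument.
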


\begin{remark}
Let $\mu$ be an even partition of $2r$ and $\mu'$ its conjugation. Let $t=\lfloor\frac{n}{2}\rfloor$ and
$$
\begin{aligned}
  &i_j=\mu_{2j}'-\mu_{2j+1}',\ \ \ j=1,...,t-1, \\
  &i_t=\mu_{n-1}'.
\end{aligned}
$$
By Proposition \ref{wSpinor}, we can rewrite the irreducible decomposition of $Sym^r T_X$ as :
$$
Sym^r T_X=
\left\{
\begin{aligned}
&\bigoplus_{i_1+2i_2...+ti_t=r\atop i_1,...,i_t\geq 0}E_{i_1\lambda_2+i_2\lambda_4+...+i_t(\lambda_{n-1}+\lambda_n)}\ \ \ n\ is\ odd;\\
&\bigoplus_{i_1+2i_2...+ti_t=r\atop i_1,...,i_t\geq 0}E_{i_1\lambda_2+i_2\lambda_4+...+i_t(2\lambda_n)}\ \ \ \ \ \ \ \ \ n\ is\ even.
\end{aligned}
\right.
$$
Hence if $n$ is odd, the fundamental highest weights of $T_X^*$ are $\lambda_2,\lambda_4,...,\lambda_{2t-2},\lambda_{n-1}+\lambda_n$ with degree $1,2,...,\lfloor\frac{n}{2}\rfloor$, respectively; if $n$ is even, the fundamental highest weights of $T_X^*$ are $\lambda_2,\lambda_4,...,\lambda_{2t-2},2\lambda_n$ with degree $1,2,...,\lfloor\frac{n}{2}\rfloor$, respectively. Note that we can also obtain the irreducible decomposition by using Lemma \ref{linear-relation}.
\end{remark}
By the Borel-Weil-Bott's theorem, we have the following result.
\begin{theorem}
\begin{enumerate}[$(1)$]
    \item If $n$ is odd, then
    $$
    H^0(X,Sym^r T_X \otimes \mathcal{O}_X(-d))\neq 0 \Leftrightarrow \lfloor\frac{2r}{n-1}\rfloor\geq d;
    $$
    \item If $n$ is even, then
    $$
    H^0(X,Sym^r T_X \otimes \mathcal{O}_X(-d))\neq 0 \Leftrightarrow 2\lfloor\frac{2r}{n}\rfloor\geq d.
    $$
   \end{enumerate}
\end{theorem}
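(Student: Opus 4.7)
The plan is to apply the Borel-Weil-Bott theorem to each summand of the irreducible decomposition of $Sym^r T_X$ recorded in the remark immediately above, exactly as in the preceding subsections. The minimal embedding $X = Spin(2n)/P_n \hookrightarrow \PN V_{\lambda_n}^*$ gives $\mathcal{O}_X(1) = E_{\lambda_n}$, so twisting by $\mathcal{O}_X(-d) = E_{-d\lambda_n}$ simply shifts the $\lambda_n$-coefficient of every irreducible summand by $-d$. Writing $t = [n/2]$ and letting $(i_1,\ldots,i_t)$ range over non-negative integer tuples with $i_1 + 2i_2 + \cdots + t i_t = r$, the general summand $E_\mu$ of $Sym^r T_X \otimes \mathcal{O}_X(-d)$ has highest weight
\begin{equation}\nonumber
\mu = i_1\lambda_2 + i_2\lambda_4 + \cdots + i_{t-1}\lambda_{2t-2} + i_t\lambda_{n-1} + (i_t - d)\lambda_n
\end{equation}
when $n$ is odd, and
\begin{equation}\nonumber
\mu = i_1\lambda_2 + i_2\lambda_4 + \cdots + i_{t-1}\lambda_{2t-2} + (2i_t - d)\lambda_n
\end{equation}
when $n$ is even.

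Next I would invoke Borel-Weil-Bott in the familiar form: $H^0(X, E_\mu) \neq 0$ precisely when $\mu$ is a $G$-dominant weight, i.e. when all the coefficients of $\lambda_1,\ldots,\lambda_n$ in $\mu$ are non-negative integers. Since $H^0$ commutes with direct sums, $H^0(X, Sym^r T_X \otimes \mathcal{O}_X(-d))$ is non-zero if and only if at least one admissible tuple $(i_1,\ldots,i_t)$ produces a dominant $\mu$. In both parities, the coefficients of $\lambda_1,\ldots,\lambda_{n-1}$ appearing in $\mu$ are either $0$ or one of the $i_j$, hence automatically non-negative. The only non-trivial dominance condition therefore comes from $\lambda_n$: we need $i_t \geq d$ when $n$ is odd, and $2i_t \geq d$ when $n$ is even.

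Finally I would read off the combinatorics. The partition identity $\sum_{j=1}^{t} j \cdot i_j = r$ forces $i_t \leq [r/t]$, and every integer value of $i_t$ in $\{0,1,\ldots,[r/t]\}$ is achieved by some admissible tuple (for instance by setting $i_1 = r - t\cdot i_t$ and $i_2 = \cdots = i_{t-1} = 0$). Hence the existence of a dominant summand is equivalent to $d \leq [r/t]$ in the odd case and to $\lceil d/2 \rceil \leq [r/t]$, i.e. $d \leq 2[r/t]$, in the even case. Substituting $t = (n-1)/2$ for $n$ odd and $t = n/2$ for $n$ even recovers the inequalities $[2r/(n-1)] \geq d$ and $2[2r/n] \geq d$ claimed in the statement. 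I do not anticipate any serious obstacle: the whole representation-theoretic input has been absorbed into the irreducible decomposition of $Sym^r T_X$ and into Borel-Weil-Bott, and the remaining work is the floor/ceiling analysis just outlined, the only minor point being the short parity-of-$d$ check needed to equate $\lceil d/2 \rceil \leq [r/t]$ with $d \leq 2[r/t]$.
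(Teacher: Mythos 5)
Your proposal is correct and follows exactly the route the paper intends: apply Borel--Weil--Bott to the irreducible decomposition of $Sym^r T_X$ from the preceding remark, observe that twisting by $\mathcal{O}_X(-d)$ only affects the $\lambda_n$-coefficient, and reduce to the combinatorial fact that $i_t$ ranges over $\{0,1,\dots,[r/t]\}$ subject to $i_1+2i_2+\cdots+ti_t=r$. The paper leaves these details implicit, and your write-up supplies them accurately, including the parity check in the even case.
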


\subsection{Cayley plane}
Let $X=\mathbb{OP}^2\cong E_6/P_1$ be the Cayley plane. The semisimple part of the Lie algebra of $P_1$ is the simple Lie algebra $\mathfrak{so}(10)$.

Assume that the fundamental dominant weights of $\mathfrak{so}(10)$ and $E_6$ are $\gamma_i(i=1,2,3,4,5)$ and $\lambda_i(i=1,2,3,4,5,6)$, respectively. There is a natural embedding of the weight lattice of $\mathfrak{so}(10)$ into the weight lattice of $E_6$ defined by $\gamma_i\mapsto \lambda_{7-i}$. As a $\mathfrak{so}(10)$-module, $T_X$ is isomorphic to $V_{\gamma_5}$. By Kac's classification of the multiplicity free actions, $V_{\gamma_5}$ is a multiplicity free $\mathfrak{so}(10)$-module and the irreducible decomposition of $Sym^r V_{\gamma_5}$ is given by the following.

\begin{lemma}\label{symE_6}
  We have the following irreducible decomposition of $\mathfrak{so}(10)$-module:
  $$
  Sym^rV_{\gamma_5}=\bigoplus_{i+2j=r \atop i,j\geq 0}V_{i\gamma_5+j\gamma_1}.
  $$
\end{lemma}
\begin{proof}
  See the table in page 13 in \cite{MR698847}.
\end{proof}

By Lemma \ref{linear-relation}, we have the following.

\begin{lemma}\label{a1}
  If $\lambda=\sum\limits_{i=1}^{6} a_i \lambda_i$ is the highest weight of an irreducible component of $T_X^{\otimes r}$, then
  $$
  a_1=\frac{3}{4}r-\frac{3}{4}a_2-\frac{5}{4}a_3-\frac{3}{2}a_4-a_5-\frac{1}{2}a_6.
  $$
\end{lemma}

Combining Lemma \ref{symE_6} and Lemma \ref{a1}, we can get the irreducible decomposition of $Sym^r T_X$.
\begin{proposition}
We have the following irreducible decomposition:
  $$
  Sym^r T_X=\bigoplus_{i+2j=r\atop i,j\geq 0} E_{i\lambda_2+j(\lambda_6+\lambda_1)}.
  $$
   Hence the fundamental highest weights of $T_X^*$ are $\lambda_2$ and $\lambda_6+\lambda_1$ with degree 1 and 2, respectively.
\end{proposition}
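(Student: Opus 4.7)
The plan is to combine Lemma \ref{symdec} with Lemma \ref{a1} to pin down the full $E_6$-highest weights from their $D_5$-restrictions. Recall that the semisimple part of the reductive Levi of $P_1$ has Dynkin type $D_5$, and that, as a $D_5$-module, $T_X\cong V_{\gamma_5}$ (the half-spin representation). Restricting the $P_1$-action to this semisimple part, Lemma \ref{symdec} immediately yields
$$
Sym^r T_X\big|_{D_5} \;=\; \bigoplus_{i+2j=r,\ i,j\geq 0} V_{i\gamma_5+j\gamma_1}.
$$
Under the weight-lattice embedding $\gamma_k\mapsto\lambda_{7-k}$, the $D_5$-highest weight $i\gamma_5+j\gamma_1$ becomes the $E_6$-weight $i\lambda_2+j\lambda_6$, supported on the sublattice spanned by $\lambda_2,\lambda_3,\lambda_4,\lambda_5,\lambda_6$.

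Next I upgrade each $D_5$-summand to a $P_1$-summand. Since $X$ is IHSS, the unipotent radical of $P_1$ acts trivially on $T_X$ and hence on $Sym^r T_X$, so $P_1$-irreducibles are classified by their restriction to the semisimple part together with a character of the one-dimensional center; two $P_1$-irreducibles with the same $D_5$-restriction differ only by a power of $\lambda_1$. Therefore each $D_5$-summand lifts uniquely to an irreducible $P_1$-module of the form $E_{i\lambda_2+j\lambda_6+c\lambda_1}$ for a single integer $c$. Applying Lemma \ref{a1} with $a_2=i$, $a_6=j$, $a_3=a_4=a_5=0$ and $r=i+2j$ gives
$$
c \;=\; \tfrac{3}{4}(i+2j)-\tfrac{3}{4}i-\tfrac{1}{2}j \;=\; j,
$$
so each summand equals $E_{i\lambda_2+j(\lambda_1+\lambda_6)}$. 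Assembling these identifications produces the claimed decomposition, and the fundamental highest weights of $T_X^{*}$ can then be read off as $\lambda_2$ and $\lambda_1+\lambda_6$ of degrees $1$ and $2$, respectively, which is consistent with $\rk(X)=2$.

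The only nontrivial computational step in the plan is determining $c$, and this is a one-line consequence of Lemma \ref{a1}. The genuine work has already been done upstream: Lemma \ref{a1} requires expressing $a_1$ as an explicit linear combination of the other $a_i$ using all sixteen weights $\tau_1,\dots,\tau_{16}$ of $T_X$, while Lemma \ref{symdec} relies on the Kac classification (Proposition \ref{Kac}) together with the explicit computation $Sym^2 V_{\gamma_5}=V_{2\gamma_5}\oplus V_{\gamma_1}$. With these two lemmas in hand, what might have looked like the main obstacle — distinguishing the $\lambda_1$-coefficients of the various $P_1$-lifts — is trivialized, and the proof reduces to the bookkeeping above.
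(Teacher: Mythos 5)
Your proposal is correct and follows essentially the same route as the paper: the paper likewise observes that Lemma \ref{a1} gives a one-to-one correspondence between irreducible components of $Sym^r T_X$ and of $Sym^r V_{\gamma_5}$, and then combines Lemma \ref{symdec} with Lemma \ref{a1} to fix the $\lambda_1$-coefficient. Your computation $c=\tfrac34(i+2j)-\tfrac34 i-\tfrac12 j=j$ is exactly the step the paper leaves implicit.
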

 
The Borel-Weil-Bott's theorem yields the following result.
\begin{theorem}
 $H^0(X,Sym^r T_X\otimes \mathcal{O}_X(-d))\neq 0$ if and only if  $\lfloor\frac{r}{2}\rfloor\geq d$.
\end{theorem}

\subsection{\boldmath{$E_7/P_7$}}
Let $X=E_7/P_7$. The semisimple part of $P_7$ is $E_6$. 

Assume that the fundamental dominant weights of $E_6$ and $E_7$ are $\gamma_i(i=1,2,3,4,5,6)$ and $\lambda_i(i=1,2,..,7)$, respectively. There is a natural embedding of the weight lattice of $E_6$ into the weight lattice of $E_7$ defined by $\gamma_i\mapsto \lambda_i$. As a $E_6$-module, $T_X$ is isomorphic to $V_{\gamma_1}$. The irreducible decomposition of $Sym^r V_{\gamma_1}$ is given by the following.

\begin{lemma}\label{symE_7}
We have the following irreducible decomposition of $E_6$-module:
  $$
  Sym^rV_{\gamma_1}=\bigoplus_{i+2j+3k=r \atop i,j,k\geq 0}V_{i\gamma_1+j\gamma_6+k\cdot 0}.
  $$
\end{lemma}
\begin{proof}
See the table in page 13 in \cite{MR698847}.
\end{proof}

By Lemma \ref{linear-relation}, we can get
\begin{lemma}\label{a7}
  If $\lambda=\sum\limits_{i=1}^{7} a_i \lambda_i$ is the highest weight of an irreducible component of $T_X^{\otimes r}$, then
  $$
  a_7=\frac{2}{3}r-\frac{2}{3}a_1-a_2-\frac{4}{3}a_3-2a_4-\frac{5}{3}a_5-\frac{4}{3}a_6.
  $$
\end{lemma}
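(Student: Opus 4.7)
I would mimic the template of Lemma~\ref{a1}. Since any highest weight of an irreducible component of $T_X^{\otimes r}$ is itself a weight of $T_X^{\otimes r}$, and since the 27 weights of $T_X$ have each been listed as $\tau_1,\ldots,\tau_{27}$, I can write
$$
\lambda = \sum_{j=1}^{27} k_j\,\tau_j, \qquad k_j \in \mathbb{Z}_{\geq 0}, \qquad \sum_{j=1}^{27} k_j = r.
$$
Comparing coefficients of $\lambda_1,\ldots,\lambda_6$ on the two sides yields six linear equations $\sum_j c_{ij} k_j = a_i$, whose coefficients $c_{ij}$ are read directly off the table of 27 weights. Together with $\sum k_j = r$ and the seventh equation for the coefficient of $\lambda_7$, the claimed identity becomes a rational linear dependence among these eight equations that one would verify by direct substitution, exactly as in the last display of Lemma~\ref{a1}.

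The reason such an identity has to exist is structural: the pair $(E_7,\alpha_7)$ is cominuscule, so the weights of $T_X$ are precisely the positive roots of $E_7$ whose expansion in simple roots has coefficient exactly $1$ on $\alpha_7$. Therefore the $\alpha_7$-coefficient of $\lambda=\sum k_j\tau_j$ equals $\sum k_j=r$. Passing to the fundamental-weight basis via the inverse Cartan matrix of $E_7$, the $\alpha_7$-coefficient of $\sum a_i\lambda_i$ is the inner product of $(a_1,\ldots,a_7)$ with the seventh row of $A^{-1}_{E_7}$, which in Bourbaki's ordering equals $(1,\tfrac{3}{2},2,3,\tfrac{5}{2},2,\tfrac{3}{2})$. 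Setting that inner product equal to $r$ and solving for $a_7$ yields precisely
$$
a_7 = \tfrac{2}{3}r - \tfrac{2}{3}a_1 - a_2 - \tfrac{4}{3}a_3 - 2a_4 - \tfrac{5}{3}a_5 - \tfrac{4}{3}a_6.
$$

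The main obstacle is just bookkeeping: organizing a $7\times 27$ linear system cleanly and isolating the correct rational combination that outputs $a_7$. The cominuscule viewpoint reduces the whole lemma to one scalar identity and sidesteps this, so in the write-up I would either present the proof by directly reading off the seventh row of $A^{-1}_{E_7}$ or, to stay strictly parallel with Lemma~\ref{a1}, exhibit the explicit rational combination of the equations in the $k_j$'s that assembles into the desired formula.
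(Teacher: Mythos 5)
Your proposal is correct, and its first half is exactly what the paper does: the paper proves Lemma~\ref{a7} only by saying ``by a similar argument as that in Lemma~\ref{a1}'', i.e.\ by writing $\lambda=\sum_j k_j\tau_j$ over the $27$ listed weights, extracting the linear equations for $a_1,\dots,a_6$ and $\sum_j k_j=r$, and exhibiting the rational combination that returns $a_7$. Your second, cominuscule argument is a genuinely different and cleaner route, and it checks out: since $\alpha_7$ appears with coefficient exactly $1$ in every weight of $T_X$ (these being the positive roots of $E_7$ not in $\mathfrak{p}_7$), the $\alpha_7$-coordinate of $\lambda=\sum_j k_j\tau_j$ is $\sum_j k_j=r$; on the other hand the $\alpha_7$-coordinate of $\sum_i a_i\lambda_i$ is $a_1+\tfrac32 a_2+2a_3+3a_4+\tfrac52 a_5+2a_6+\tfrac32 a_7$ by the seventh row of the inverse Cartan matrix of $E_7$, and equating the two and solving for $a_7$ gives precisely the stated identity (one can sanity-check it on the entries of the table, e.g.\ $\lambda_1$ and $-\lambda_6+2\lambda_7$). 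What the structural version buys is the replacement of a $7\times 27$ bookkeeping exercise by a single scalar identity, and it makes transparent why an affine relation of this shape must hold for every cominuscule $G/P_k$ (it is the same mechanism behind Lemmas~\ref{oddQ}, \ref{evenQ} and \ref{a1}); what the paper's version buys is uniformity of presentation with the explicit weight tables it has already written down. Either write-up is acceptable; if you present the inverse-Cartan-matrix argument, just state explicitly that the weights of $T_X$ are the positive roots with $\alpha_7$-coefficient $1$, since that is the one input not visible from the fundamental-weight expressions in the table.
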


Combining Lemma \ref{symE_7} and Lemma \ref{a7}, we can get the irreducible decomposition of $Sym^r T_X$.

\begin{proposition}
  We have the following the irreducible decomposition of $Sym^r T_X$:
$$
Sym^r T_X=\bigoplus_{i+2j+3k=r \atop i,j,k\geq 0}E_{i\lambda_1+j\lambda_6+k\cdot 2\lambda_7}
$$
and the fundamental highest weights of $T_X^*$ are $\lambda_1,\lambda_6,2\lambda_7$ with degree $1,2,3$, respectively.
\end{proposition}

By the Borel-Weil-Bott's theorem, we have the following result.
\begin{theorem}
$H^0(X,Sym^r T_X\otimes \mathcal{O}_X(-d))\neq 0$ if and only if  $2\lfloor\frac{r}{3}\rfloor\geq d$.

\end{theorem}
\section{proof of the main results}\label{pseudo threshold}
As a corollary of previous discussions, the fundamental highest weights of the dual of tangent bundles of IHSS have the following properties:
\begin{proposition}\label{fhw}
Let $X=G/P_k$ be an IHSS. The degrees of the fundamental highest weights of $T_X^*$ are all distinct, they are exactly $1,2,...,rk(X)$. Moreover, each fundamental highest weight is a dominant weight of $G$. Let $\lambda_k'$ be the highest weight of the dual of the representation $V_{\lambda_k}$ of $G$. Then $\lambda_k'$ is a fundamental dominant weight of $G$ and the fundamental highest weight of $T_X^*$ with the highest degree $rk(X)$ is $\lambda_k+\lambda_k'$. If we write each fundamental highest weight as a linear combination of the fundamental dominant weights of $G$, $\lambda_k+\lambda_k'$ is the only fundamental highest weight with the coefficient of $\lambda_k$ nonvanishing.
\end{proposition}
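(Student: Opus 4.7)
The plan is to establish Proposition \ref{fhw} by a direct case-by-case verification using the explicit computations of the fundamental highest weights of $T_X^*$ carried out in Section \ref{IDIHSS}. For each of the six types of IHSS (Grassmannians, hyperquadrics, Lagrangian Grassmannians, spinor varieties, the Cayley plane, and $E_7/P_7$), the classification has already listed the fundamental highest weights $\omega_1,\dots,\omega_t$ of $T_X^*$ together with their degrees, and the rank $rk(X)$ has been tabulated in Definition \ref{defofrank}. So each of the four claims in the proposition can be read off from these lists.

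First I would collect, in a single table, the fundamental highest weights and their degrees from the remarks following each subsection of Section \ref{IDIHSS}. I would then verify the three easy claims in turn: (i) the set of degrees equals $\{1,2,\dots,rk(X)\}$, which in every case is visible from the explicit lists; (ii) each listed $\omega_i$, expressed in terms of the fundamental dominant weights $\lambda_j$ of $G$, has nonnegative integer coefficients, hence is dominant for $G$; and (iii) only the top-degree weight $\omega_t$ involves $\lambda_k$ with a nonzero coefficient. Claim (iii) is the only one that requires more than a glance at the tables, but it too is straightforward: for Grassmannians one has $\omega_j=\lambda_j+\lambda_{n+1-j}$ with $j<k$ for $j<t$, for Lagrangian Grassmannians and spinor varieties $\omega_j$ involves $\lambda_2,\lambda_4,\dots$ and no $\lambda_n$ until the top degree, and the remaining three cases are inspected directly.

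The slightly more substantive part is to identify $\omega_t=\lambda_k+\lambda_k'$. Here $\lambda_k'$ is the highest weight of $V_{\lambda_k}^{*}$, which is computed by applying the opposition involution $-w_0$ of the Weyl group to $\lambda_k$. Case by case: for $G=SL(n+1)$ the opposition involution sends $\lambda_k$ to $\lambda_{n+1-k}$; for $G$ of type $B_n$, $C_n$, $D_n$ with $n$ even, $E_7$, $E_8$, $F_4$, $G_2$ it is the identity; and for $D_n$ with $n$ odd and $E_6$ it exchanges $\lambda_{n-1}\leftrightarrow\lambda_n$ and $\lambda_1\leftrightarrow\lambda_6$, $\lambda_3\leftrightarrow\lambda_5$, respectively. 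Comparing these values of $\lambda_k'$ with the top-degree $\omega_t$ from Section \ref{IDIHSS} gives $\omega_t=\lambda_k+\lambda_k'$ in every case: for example $\omega_t=\lambda_k+\lambda_{n+1-k}$ for Grassmannians, $2\lambda_1$ for hyperquadrics ($\lambda_1$ self-dual), $2\lambda_n$ for Lagrangian Grassmannians and for $D_n$-spinor varieties with $n$ even, $\lambda_{n-1}+\lambda_n$ for $D_n$-spinor varieties with $n$ odd, $\lambda_1+\lambda_6$ for the Cayley plane, and $2\lambda_7$ for $E_7/P_7$.

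I expect the main (and only) obstacle to be the purely bookkeeping effort of running through the six cases in sequence, since no uniform argument for the rank or for the top fundamental highest weight has been developed; the identification $\lambda_k+\lambda_k'=\omega_t$ via the opposition involution is the one spot where a small amount of Lie-theoretic input beyond the tables of Section \ref{IDIHSS} is needed, but it is standard and short.
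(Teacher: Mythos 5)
Your proposal is correct and follows essentially the same route as the paper, which presents Proposition \ref{fhw} as a direct corollary of the case-by-case computations of the fundamental highest weights and their degrees in Section \ref{IDIHSS}; your added verification of $\omega_t=\lambda_k+\lambda_k'$ via the opposition involution correctly fills in the one detail the paper leaves implicit.
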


Theorem \ref{maincoh} can be deduced from irreducible decompositions of symmetric products of tangent bundles of IHSS and one can also use Proposition \ref{fhw} to give a uniform proof:

\begin{proof}[Proof of Theorem \ref{maincoh}]
Assume $t=rk(X)$. Let $\omega_1,\omega_2,...,\omega_t$ be the fundamental highest weights of $T_X^*$ with degree 1,2,..,t, respectively. Then by Proposition \ref{pvd},  we have the following irreducible decomposition of $Sym^r T_X$:
$$
Sym^r T_X=\bigoplus_{i_1+2i_2+...+ti_t=r\atop i_1,i_2,...,i_t\geq 0}E_{i_1\omega_1+...+i_t\omega_t}.
$$
By Proposition \ref{fhw}, each fundamental highest weight is dominant and $\omega_t=\lambda_k+\lambda_k'$ is the only fundamental highest weight with non-vanishing coefficient of $\lambda_k$.

If $V_{\lambda_k}$ is self-dual, then $\lambda_k=\lambda_k'$ and $\omega_t=2\lambda_k$. Hence by the Borel-Weil-Bott's theorem, we have
$$
H^0(X,Sym^r T_X \otimes \mathcal{O}_X(-d))\neq 0 \Leftrightarrow 2\lfloor\frac{r}{rk(X)}\rfloor\geq d.
$$
If $V_{\lambda_k}$ is not self-dual, then the coefficient of $\lambda_k$ in $\omega_t$ is 1 and hence
$$
H^0(X,Sym^r T_X \otimes \mathcal{O}_X(-d))\neq 0 \Leftrightarrow \lfloor\frac{r}{rk(X)}\rfloor\geq d.
$$
\end{proof}

\begin{proof}[Proof of Corollary \ref{corrank} and Theorem \ref{minimal embedding}]
Let $X=X_1\times X_2\times ...\times X_m$, where each $X_i$ is an IHSS. Let $\pi_i:X\rightarrow X_i$ be the natural projection. Then $T_X=\bigoplus\limits_i \pi_i^* T_{X_i}$ and $Sym^r T_X=\bigoplus\limits_{r_1+...+r_m=r}\bigotimes\limits_{i}Sym^{r_i}\pi^*T_{X_i}$. By K\"{u}nneth formula, we have
$$
H^0(X,Sym^r T_X\otimes \mathcal{O}_X(-1))=\bigoplus\limits_{r_1+...+r_m=r}\bigotimes\limits_{i}H^0(X_i,Sym^{r_i}T_{X_i}\otimes \mathcal{O}_{X_i}(-1)).
$$
More generally,
$$
H^p(X,Sym^r T_X\otimes \mathcal{O}_X(-1))=\bigoplus\limits_{r_1+...+r_m=r}\bigoplus\limits_{p_1+...+p_m=p}\bigotimes\limits_{i}H^{p_i}(X_i,Sym^{r_i}T_{X_i}\otimes \mathcal{O}_{X_i}(-1)).
$$
Hence we can reduce to the case that $X=G/P_k$ is an IHSS. Hence Corollary \ref{corrank} follows from Theorem \ref{maincoh}.
If $r=rk(X)$, then
$$
H^0(X,Sym^rT_X\otimes \mathcal{O}_X(-1))\simeq H^0(X,E_{\omega_t-\lambda_k})
=H^0(X,E_{\lambda_k'})\simeq H^0(X,\mathcal{O}_X(1))^*.
$$
For the statement in Theorem \ref{minimal embedding}(2), notice that
$$
i_1\omega_1+...+i_t\omega_t-\lambda_k+\delta
$$
is either singular or regular of index 0. Hence the result follows from the Borel-Weil-Bott's theorem.
\end{proof}

Now we want to extend Corollary \ref{corrank}  to general rational homogeneous spaces. First we give a simple proof of the following result in \cite[Corollary 4.4]{greb2020canonical}.
\begin{proposition}\label{bignessofRHS}
  Rational homogeneous spaces have big tangent bundles.
\end{proposition}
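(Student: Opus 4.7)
The plan is to exploit the $G$-action on $X=G/P$ to pull back an ample class to $\mathcal{O}_{\mathbb{P}(T_X)}(1)$ via a generically finite morphism. Since $X$ is $G$-homogeneous, the derivative of the action yields a surjection of $\mathcal{O}_X$-modules $\mathfrak{g}\otimes\mathcal{O}_X\twoheadrightarrow T_X$, and under the Grothendieck projectivization this induces a $G$-equivariant morphism
$$
\phi: \mathbb{P}(T_X)\longrightarrow \mathbb{P}(\mathfrak{g})
$$
satisfying $\phi^{*}\mathcal{O}_{\mathbb{P}(\mathfrak{g})}(1)=\mathcal{O}_{\mathbb{P}(T_X)}(1)$. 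Because the pullback of an ample line bundle under a generically finite morphism is automatically big, it is enough to prove that $\phi$ is generically finite onto its image.

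I would verify this by identifying $\phi$ with the projectivization of the moment map $\mu:T^{*}X\to\mathfrak{g}^{*}$ coming from the $G$-action on the cotangent bundle. At each $x\in X$, the infinitesimal action $\mu_x:\mathfrak{g}\twoheadrightarrow T_xX$ dualizes to an injection $\mu_x^{*}:T_x^{*}X\hookrightarrow\mathfrak{g}^{*}$, and after tracking the Grothendieck/lines convention one sees that $\phi$ sends $(x,[\xi])\in\mathbb{P}(T^{*}X)$ to $[\mu_x^{*}(\xi)]\in\mathbb{P}(\mathfrak{g}^{*})$, which is exactly the projectivized moment map $\mathbb{P}(\mu)$.

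The generic finiteness of $\mu$ then follows from Richardson's theorem: the image of $\mu$ is the closure $\overline{\mathcal{O}_P}\subset\mathfrak{g}^{*}$ of the Richardson nilpotent orbit associated with $P$, and one has $\dim\overline{\mathcal{O}_P}=2\dim X=\dim T^{*}X$, so $\mu$ must be generically finite onto its image. Consequently $\phi$ is generically finite onto its image, and $\mathcal{O}_{\mathbb{P}(T_X)}(1)=\phi^{*}\mathcal{O}_{\mathbb{P}(\mathfrak{g})}(1)$ is big, i.e.\ $T_X$ is big. If $G$ is merely semisimple, one writes $G/P$ as a product of simple factors and applies the argument factor by factor.

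The main obstacle is the convention bookkeeping needed to justify the identification $\phi\simeq\mathbb{P}(\mu)$: a priori $\phi$ lives on the Grothendieck projectivization $\mathbb{P}(T_X)$, whereas the moment map naturally lives on $T^{*}X$, and matching the two requires a careful duality check. Once this identification is in place, Richardson's dimension formula is a standard input and the remainder of the argument is essentially formal.
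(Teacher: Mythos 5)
Your proposal is correct and follows essentially the same route as the paper: both use the global generation of $T_X$ (via $\mathfrak{g}\subset H^0(X,T_X)$) to produce a tautological morphism from $\mathbb{P}(T_X)$ to a projective space pulling back $\mathcal{O}(1)$ to $\mathcal{O}_{\mathbb{P}(T_X)}(1)$, and then conclude bigness from generic finiteness of that morphism. The only difference is that you actually justify the generic finiteness (by identifying the map with the projectivized moment map and invoking Richardson's dimension formula $\dim\overline{\mathcal{O}_P}=2\dim X$), a step the paper's proof asserts without argument.
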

\begin{proof}
Let $X=G/P$ be a rational homogeneous space. Let $V=H^0(X,T_X)$. Consider the evaluation morphism of global sections $\varepsilon:\mathbb{P}(T_X)\rightarrow \mathbb{P}(V)$. It is a generically finite morphism and its Stein factorization is exactly
$$\mathbb{P}(T_X)\stackrel{f}\rightarrow Proj(\bigoplus\limits_{i\geq 0}H^0(X,Sym^i T_X))\rightarrow \mathbb{P}(V).$$
Hence $\mathbb{P}(T_X)$ and $Proj(\bigoplus\limits_{i\geq 0}H^0(X,Sym^i T_X))$ have the same dimension and $T_X$ is big.
\end{proof}

Now let X=$G/P$ be a rational homogeneous space and $L$ an ample line bundle on $X$. Notice that
\begin{center}
  $H^0(\mathbb{P}(T_X),\mathcal{O}_{\mathbb{P}(T_X)}(1))=H^0(X,T_X)\neq 0$,
\end{center}
hence we have an inclusion:
\begin{center}
  $\mathcal{O}_{\mathbb{P}(T_X)}(r)\otimes \pi^{*}L^{-1}\hookrightarrow \mathcal{O}_{\mathbb{P}(T_X)}(r+1)\otimes \pi^{*}L^{-1}$
\end{center}
for every integer $r$, where $\pi:\mathbb{P}(T_X)\rightarrow X$ is the natural projection. It follows that if $H^0(X,Sym^{r_0} T_X\otimes L^{-1})\neq 0$ for some positive integer $r_0$, then $H^0(X,Sym^{r} T_X\otimes L^{-1})\neq 0$ for all integer $r\geq r_0$. Hence we have
\begin{proposition}\label{r0}
  Let $X$ be a rational homogeneous space and $L$ an ample line bundle on $X$. Then there exists a positive integer $r_0$ such that
  \begin{center}
   $H^0(X,Sym^r T_X \otimes L^{-1})\neq 0$\ \ if and only if\ \ $r\geq r_0$.
\end{center}
\end{proposition}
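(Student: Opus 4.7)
The plan is to combine the two inputs already assembled in the paragraph immediately preceding the statement: the bigness of $T_X$ for any rational homogeneous space $X$, and the monotonicity of $H^0(X,\mathrm{Sym}^r T_X\otimes L^{-1})$ in $r$ provided by a global section of $\mathcal{O}_{\mathbb{P}(T_X)}(1)$.

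First I would invoke Proposition \ref{bignessofRHS}, which shows that $T_X$ is big. Using the standard characterization of bigness of a vector bundle recorded earlier in the paper (via \cite[Proposition 2.1]{hsiao2015a} and \cite[Example 6.1.23]{lazarsfeld2004positivity}), bigness of $T_X$ is equivalent to the nonvanishing $H^0(X,\mathrm{Sym}^r T_X\otimes L^{-1})\neq 0$ for some positive integer $r$. Therefore the set
\[
S=\{r\in\mathbb{Z}_{>0}\mid H^0(X,\mathrm{Sym}^r T_X\otimes L^{-1})\neq 0\}
\]
is nonempty, and I can define $r_0=\min S$. One direction is then tautological: for any $r<r_0$ we have $H^0(X,\mathrm{Sym}^r T_X\otimes L^{-1})=0$ by minimality.

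For the other direction, I would use the key observation that since $X=G/P$ is homogeneous, the evaluation map of the Lie algebra $\mathfrak{g}\to H^0(X,T_X)$ is surjective, so $H^0(\mathbb{P}(T_X),\mathcal{O}_{\mathbb{P}(T_X)}(1))=H^0(X,T_X)\neq 0$. Pick a nonzero section $s\in H^0(\mathbb{P}(T_X),\mathcal{O}_{\mathbb{P}(T_X)}(1))$. Multiplication by $s$ produces an injective sheaf morphism
\[
\mathcal{O}_{\mathbb{P}(T_X)}(r)\otimes\pi^{*}L^{-1}\hookrightarrow \mathcal{O}_{\mathbb{P}(T_X)}(r+1)\otimes\pi^{*}L^{-1}
\]
for every $r$, and taking global sections together with the identification $H^0(\mathbb{P}(T_X),\mathcal{O}_{\mathbb{P}(T_X)}(r)\otimes\pi^{*}L^{-1})=H^0(X,\mathrm{Sym}^r T_X\otimes L^{-1})$ gives an injection on the level of these cohomologies. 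Hence if $r\in S$, then $r+1\in S$, and by induction $r\geq r_0$ implies $r\in S$, which completes the equivalence.

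There is no real obstacle here: both ingredients—the bigness of $T_X$ and the existence of a global section of $\mathcal{O}_{\mathbb{P}(T_X)}(1)$—are already established in the paper, and the argument amounts to combining them. The only mild subtlety is making sure the section $s$ is genuinely nonzero on every fibre so that multiplication by $s$ is injective as a sheaf map; this follows because $s$ is a nonzero section of a line bundle on the irreducible variety $\mathbb{P}(T_X)$, hence its zero locus is a proper closed subset and multiplication by $s$ is injective on the structure sheaf and therefore on any twist.
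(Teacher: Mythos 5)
Your proposal is correct and follows essentially the same route as the paper: the paper's ``proof'' is precisely the paragraph preceding the statement, which combines the nonvanishing of $H^0(\mathbb{P}(T_X),\mathcal{O}_{\mathbb{P}(T_X)}(1))=H^0(X,T_X)$ (giving the injection $\mathcal{O}_{\mathbb{P}(T_X)}(r)\otimes\pi^{*}L^{-1}\hookrightarrow\mathcal{O}_{\mathbb{P}(T_X)}(r+1)\otimes\pi^{*}L^{-1}$ and hence monotonicity in $r$) with the bigness of $T_X$ from Proposition \ref{bignessofRHS} to guarantee the set of good $r$ is nonempty. Your added remark justifying injectivity of multiplication by the section is a harmless elaboration of what the paper leaves implicit.
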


The integer $r_0$ in the Proposition \ref{r0} depends on the ample line bundle $L$. However, if $X$ is an HSS and $L$ is the very ample line bundle which gives a minimal equivariant closed embedding of $X$ into a projective space, this integer $r_0$ is actually the rank of $X$. So It is natural to ask the following questions:
\begin{question}\label{question}
What is the positive integer $r_0$ in the Proposition \ref{r0} when $L=\mathcal{O}_X(1)$ is the very ample line bundle which gives a minimal equivariant closed embedding of $X$ into a projective space? Is there any geometric description of $r_0$?
\end{question}

{\bf Acknowledgements.}
The author is greatly indebted to his advisor Baohua Fu for illuminating discussions, guidance and revising this paper. The author is also very grateful to Jie Liu for helpful discussions and suggestions which improve this paper. The author also wants to thank Laurent Manivel and anonymous referees for valuable suggestions.

\section*{}


\bibliographystyle{alpha}
\bibliography{main}

\end{document}